\documentclass[11pt,a4paper]{article}
\usepackage[margin=1in]{geometry}
\usepackage{multicol}

\usepackage{authblk}
\usepackage{amssymb,latexsym, bm, dsfont,graphicx,subfigure}
\usepackage{amsfonts, amsmath}
\usepackage{enumerate}
\usepackage{algorithm}
\usepackage{stmaryrd}
\usepackage{float}
\usepackage{stmaryrd}
\usepackage{hyperref}
\usepackage[noend]{algpseudocode}
\usepackage{subfigure,appendix}
\usepackage{amsthm}
\usepackage{thmtools}
\usepackage{thm-restate}
\usepackage{tabularx}
\usepackage{nicefrac}

\newtheorem{theorem}{Theorem}[section]

\newtheorem{remark}[theorem]{Remark}
\newtheorem{lemma}[theorem]{Lemma}

\newtheorem{claim}[theorem]{Claim}

\setcounter{tocdepth}{3}

\usepackage{url}
\urldef{\mailsa}\path|dadush@cwi.nl,|
\urldef{\mailsb}\path|{l.vegh, g.zambelli}@lse.ac.uk,|

\newcommand{\bb}{\mathbb}
\newcommand{\conv}{\mathrm{conv}}

\newcommand{\R}{\bb R}

\newcommand{\Z}{\bb Z}

\newcommand{\B}{\bb B}

\newcommand{\ceil}[1]{\lceil#1\rceil}

\newcommand{\intr}{\mathrm{int}}

\newcommand{\sm}{\setminus}

\newcommand{\scalar}[1]{\left\langle #1\right\rangle}

\newcommand{\T}{\mathsf{T}}
\newcommand{\F}{{\cal F}}

\newcommand{\bk}{\ensuremath{b}}

\newcommand{\Down}[1]{{#1}^{\downarrow}}
\newcommand{\Up}[1]{{#1}^{\uparrow}}

\newcommand{\low}{\ensuremath{\ell}}
\newcommand{\lf}{\ensuremath{f^\downarrow}}

\newcommand{\EO}{\mathrm{EO}}
\newcommand{\AO}{\mathrm{AO}}

\def\tr{\mathrm{tr}}

\def\st{\,:\,}

\newcommand{\Fm}[1]{\ensuremath{F_{#1}}}
\newcommand{\Lb}[1]{\ensuremath{L_{#1}}}
\newcommand{\Lbt}[1]{\ensuremath{L_{#1,2}}}

\newcommand{\eqdef}{\mathbin{\stackrel{\rm def}{=}}}

\def\k{\kappa}

\floatstyle{plain}
\newfloat{myalgo}{tbhp}{mya}

\newcommand\ourpagewidth{.9\textwidth}

{\begin{myalgo}[#1]
\centering
\begin{minipage}{\ourpagewidth}
\begin{algorithm}[H]}%
{\end{algorithm}
\end{minipage}
\end{myalgo}}

% MARGIN NOTES
\newif\ifnotes
\notesfalse
%\notestrue

\ifnotes
\usepackage{color}
\definecolor{mygrey}{gray}{0.50}
\newcommand{\notename}[2]{{\textcolor{mygrey}{\footnotesize{\bf (#1:} {#2}{\bf ) }}}}

\newcommand{\gnote}[1]{{\notename{Giacomo}{#1}}}
\newcommand{\lnote}[1]{{\notename{Laci}{#1}}}
\newcommand{\dnote}[1]{{\notename{Daniel}{#1}}}

\else

\newcommand{\notename}[2]{{}}

\newcommand{\gnote}[1]{}
\newcommand{\lnote}[1]{}
\newcommand{\dnote}[1]{}

\fi

\begin{document}

\title{Geometric Rescaling Algorithms for Submodular Function
  Minimization\thanks{An extended abstract of this paper was presented at SODA 2018.}}
\author[1]{
Daniel Dadush\thanks{Supported by NWO Veni grant 639.071.510 and ERC Starting Grant QIP--805241.}}
\author[2]{L\'aszl\'o A. V\'egh\thanks{Supported by EPSRC First Grant EP/M02797X/1, and by  ERC
  Starting Grant ScaleOpt--757481.}}
  \author[2]{Giacomo Zambelli}
   \affil[ ]{ \texttt{dadush@cwi.nl}, \texttt{\{l.vegh,g.zambelli\}@lse.ac.uk}}
  \affil[1]{Centrum Wiskunde \& Informatica, Amsterdam, The Netherlands}
%  \texttt{dadush@cwi.nl}}
  \affil[2]{London School of Economics and Political Science, London, UK}

%\texttt{l.vegh@lse.ac.uk}
%\and
%Giacomo Zambelli\\
%  London School of Economics and Political Science\\
%\texttt{g.zambelli@lse.ac.uk}

\date{}

\maketitle

\abstract{
We present a new class of polynomial-time algorithms for submodular function minimization (SFM), as well as a unified framework to obtain strongly polynomial SFM algorithms. Our algorithms are based on simple iterative methods for the minimum-norm problem, such as the conditional gradient and Fujishige-Wolfe algorithms. We exhibit two techniques to turn simple iterative methods into polynomial-time algorithms.

Firstly, we adapt the geometric rescaling technique, which has recently gained attention in linear programming, to SFM and obtain a weakly polynomial bound
$O(({n}^4\cdot \EO + {n}^5)\log ({n} L))$.

Secondly, we exhibit a general combinatorial black-box approach to turn  $\varepsilon L$-approximate SFM oracles into strongly polynomial exact SFM algorithms. This  framework can be applied to a wide range of combinatorial and continuous algorithms, including pseudo-polynomial ones. In particular, we can obtain strongly polynomial  algorithms by a repeated application of the conditional gradient or of the Fujishige-Wolfe algorithm. Combined with the geometric rescaling technique, the black-box approach provides an $O(({n}^5\cdot \EO +{n}^6)\log^2{n})$ algorithm.

Finally, we show that one of the techniques we develop in the paper can also be combined with the cutting-plane method of Lee, Sidford, and Wong \cite{LSW}, yielding a simplified variant of their $O(n^3 \log^2 n \cdot \EO + n^4\log^{O(1)} n)$ algorithm. }

\section{Introduction}

Given a finite set $V$, a function $f: 2^V\to \mathbb{R}$ is {\em submodular}  if
\begin{equation}\label{eq:submod}
f(X)+f(Y)\geq f(X\cap Y)+f(X\cup Y)\quad \forall X,Y\subseteq V.
\end{equation}
We denote $n:=|V|$. Examples include the graph cut function, the
coverage function, and the entropy function. Submodularity can be
interpreted as a diminishing returns property and is therefore
important in economics and game theory. Submodular optimization is
widely applied in machine learning and computer vision (see e.g. \cite{Bach-monograph}).

We will assume that the function $f$ is given via an {\em
  evaluation oracle}: for every set $S\subseteq V$, we can query the value
$f(S)$ in time $\EO$.
We will assume throughout that $f(\emptyset)=0$; this is without loss of
generality.
 In the {\em submodular function minimization (SFM)} problem,
the objective is to find a minimizer of this function:
\begin{equation}\label{prob:sfm}
\min_{S\subseteq V} f(S).\tag{SFM}
\end{equation}
The first weakly polynomial-time algorithm was given by Gr\"otschel,
Lov\'asz, and Schrijver in 1981 \cite{Grotschel1981}, whereas the first strongly polynomial algorithm was given by the
same authors in their 1988 book \cite{glsbook}. Both algorithms used the ellipsoid method. It remained an important goal to find a
strongly polynomial combinatorial algorithm; this question was resolved independently by Schrijver \cite{schrijver2000} and by Iwata,
Fleischer, and Fujishige \cite{IFF}  in
2000. Currently the best running time of a combinatorial
algorithm is $O(n^5\cdot\EO+n^6)$ by Orlin \cite{orlin2009}. In a recent
breakthrough, Lee, Sidford, and Wong \cite{LSW} provided a  $O(n^3 \log^2 n \cdot \EO + n^4
\log^{O(1)} n)$ algorithm based on a new cutting-planes method.

However, the above algorithms do not appear to work well for large
scale instances that arise in applications such as speech recognition or image
segmentation. A recent line of work has focused on exploiting special
structures of specific submodular functions arising in these applications,
such as decomposability \cite{EneN15,Ene17,jegelka11,StobbeK10}, but for general functions
simple iterative algorithms appear to outperform the provably
polynomial algorithms \cite{fujishige11}. In particular, the Fujishige-Wolfe
minimum-norm point
algorithm \cite{fujishige80,wolfe}  appears to be among the best ones
in practice \cite{Bach-monograph,fujishige11}, despite the
fact that  the first pseudo-polynomial running time bound was
given as recently as 2014 by Chakrabarty et al. \cite{chakrabarty14}.

\paragraph{Our contributions} This paper presents polynomial-time algorithms based on simple
iterative methods such as the conditional gradient algorithm or the
Fujishige-Wolfe algorithm. We exhibit two different techniques to
improve the performance of these algorithms to polynomially
bounded. The first technique uses \emph{geometric rescaling}, whereas
the second provides a \emph{unified combinatorial framework for strongly polynomial SFM
algorithms}. In what follows, we provide an overview of both techniques.

\medskip\noindent
\emph{Geometric rescaling} has recently gained attention in the context of
linear programming. This is a general algorithmic technique to turn simple iterative algorithms to polynomial-time
  algorithms for LP feasibility, by adaptively changing the scalar
  product. The first such algorithms were given  by Betke \cite{betke}, and by Dunagan and Vempala
  \cite{Dunagan-Vempala}, and a number of papers have since appear on the subject. We refer the reader to \cite{part-1} for an overview of the
  literature.

In this paper we focus on one such algorithm, introduced by the authors in \cite{part-1} and named there the Full Support Image Algorithm (the same algorithm was obtained independently by Hoberg and Rothvo\ss{} \cite{rothvoss}). We show how this algorithm can be adapted to \eqref{prob:sfm}. In Section~\ref{sec:remarks} we will see that the framework we introduce is robust, in
the sense that it can be easily adapted to different rescaling
algorithms so long as they are applicable to conic problems in the
separation oracle model, such as the algorithms in
\cite{belloni,chubanov-oracle,part-1,Pena-Soheili} (see for example a
recent note of Fujishige~\cite{Fujishige-oracle} showing how an
algorithm of Chubanov~\cite{chubanov-oracle} can be used in this
framework). The reason for focusing on the Image Algorithm of
\cite{part-1} is that, within this framework, it provides the best
running time bounds for  \eqref{prob:sfm} among the known algorithms
in this class.

We introduce new techniques that enable our Image Algorithm
to provide both primal and dual optimal solutions for \eqref{prob:sfm}.
 The \emph{sliding technique} is used to obtain a primal
optimal solution: we reduce the optimization problem \eqref{prob:sfm}
to a dynamically changing feasibility problem.
The \emph{pull-back
  technique} enables to identify a dual optimality certificate. Moreover, the same technique  allows us also to
  obtain approximate dual solutions, and it is also applicable in the general LP feasibility setting.

For integer valued submodular functions, our geometric rescaling algorithm finds both primal and dual optimal solutions, in
  running time $O((n^4 \cdot\EO + n^5)\log (n L))$, where the
  complexity parameter $L$ denotes the largest norm of a point in the
  base polytope. This matches the best weakly polynomial guarantees
  \cite{iwata-faster,Iwata-Orlin} prior to the work of Lee, Sidford, and Wong \cite{LSW}.

\medskip

\noindent\emph{Unified combinatorial
  framework.} Building on the geometric rescaling technique, we also obtain a \emph{strongly polynomial} $O((n^5 \cdot\EO +
  n^6)\log^2 n)$ algorithm. This is obtained from a unified combinatorial
  framework which allows us to turn any algorithm that can produce
a $\delta L$-approximate solution to \eqref{prob:sfm}  in pseudo-polynomial
poly$(n,1/\delta)$ running time, into an exact strongly
polynomial algorithm. More specifically, if we are given an oracle that can produce a set $W\subseteq V$ and a point $y$ in the base polytope such that
$f(W)\leq y^-(V)+\delta L_f$ (where $y^-(V)$ is the sum of all negative components of $y$ and $L_f$ is the largest $1$-norm of any point in the base polytope), then with the choice $\delta\in \Theta(1/n^3)$  we can find an optimal solution to  \eqref{prob:sfm} in roughly $O(n^2)$ calls to the oracle.

Hence, somewhat
surprisingly, even pseudo-polynomial time algorithms such as the conditional gradient
or the Fujishige-Wolfe algorithm immediately give rise to strongly polynomial  time algorithms.

We emphasise that our approach is \emph{black-box}: we explicitly formulate the approximate oracle requirement, and show that any routine fulfilling such requirements
provides a strongly polynomial-time algorithm for \eqref{prob:sfm}. To illustrate our point, we will
show that various routines in the literatures, such as \cite{IFF,Iwata-Orlin,LSW}, satisfy such black-box requirements. We believe that our approach is more modular than the previous works, in the sense that in all previous papers the combinatorial arguments on strongly polynomial progress were intertwined with the details of some ``basic'' routine.

We can also apply this unified framework to the cutting plane
method. Using the general cutting plane algorithm of Lee, Sidford, and Wong \cite{LSW}, we show that
our black-box method can recover their running time bound $O(n^3 \log^2 n\cdot \EO + n^4
\log^{O(1)} n)$ for \eqref{prob:sfm}.
This is made possible by the use of the same sliding technique developed for our geometric rescaling algorithm.

The general combinatorial framework is based on maintaining a \emph{ring family} guaranteed to
contain all  minimizer sets, where the size of the family decreases through the algorithm until a minimizer is found. This technique was introduced by Iwata, Fleischer, and Fujishige \cite{IFF}, and used in multiple subsequent
  papers, such as Iwata and Orlin \cite{Iwata-Orlin}, and Lee, Sidford,
  and Wong \cite{LSW}. We note that this technique ultimately traces back to
  strongly polynomial algorithms for minimum-cost flows, pioneered by
  Tardos \cite{Tardos85}. Our implementation also adopts a simplified variant of the bucketing
  technique of \cite{LSW} that leads to a
  factor $n$ improvement in the running time compared to the
  original framework of \cite{IFF}.

\medskip

The rest of the paper is structured as
follows. Section~\ref{sec:prelim} contains definitions and the
necessary background, including an overview of the relevant iterative
methods. Section~\ref{sec:weakly} presents the weakly polynomial
geometric rescaling algorithm to solve SFM, while in Subsection~\ref{sec:farkas}, we describe the pull-back technique that
enables the implementation of the approximate oracle using our
geometric rescaling method. Section~\ref{sec:strongly} presents the
general framework for strongly polynomial algorithms. Section~\ref{sec:ellipsoid} shows how
cutting plane methods, and in particular the cutting plane algorithm in \cite{LSW}, can be used in the strongly polynomial framework.
Finally, in Section~\ref{sec:remarks} we discuss variants of the weakly-polynomial geometric rescaling algorithm.

%%% Local Variables:
%%% mode: latex
%%% TeX-master: "Rescaled-LP-sfm"
%%% End:

\section{Preliminaries}\label{sec:prelim}
We refer the reader to \cite{fujishige-book} and \cite[Sections 44-45]{Schrijver03} for the
basics of submodular optimization; these references contain all definitions as well as
the proof of the results presented next. The survey \cite{Bach-monograph}
provides an overview of continuous algorithms for submodular function
minimization.

For a vector $z\in\mathbb{R}^V$, we denote by $z(v)$ the component of $z$ relative to $v\in V$, and for  a subset
$S\subseteq V$ we use the notation $z(S)=\sum_{v\in S}z(v)$.
We let $\|z\|_1:=\sum_{v\in V}|z(v)|$ and $\|z\|_2:=(\sum_{v\in
  V}|z(v)|^2)^{1/2}$ denote the $\ell_1$ and $\ell_2$-norms,
respectively; we will also use the simpler
$\|z\|:=\|z\|_2$.
For a
number $a\in \mathbb{R}$, we let $a^+=\max\{0,a\}$ and
$a^-=\min\{0,a\}$; hence, $a=a^++a^-$. Similarly, given a vector $z\in\mathbb{R}^V$, we denote $z^+=(z(v)^+)_{v\in V}$ and $z^-=(z(v)^-)_{v\in V}$.

We denote by $\bb{S}^{{n}}_{++}$ the set of ${n}\times{n}$ symmetric positive definite real matrices. Every $Q\in\bb{S}^{{n}}_{++}$ defines
the scalar product $\scalar{x,y}_Q\eqdef x^\T Qy$, inducing the norm
$\|x\|_Q\eqdef\sqrt{\scalar{x,x}_Q}$.

\paragraph{The base polytope and the greedy algorithm}
Let $f$ be a submodular function, where we assume as usual that $f(\emptyset)=0$. The  {\em base polytope} of $f$ is defined as
\begin{align*}
B(f) := \left\{ {x \in \mathbb{R}^V \colon x(S) \leq f(S) \; \forall S \subseteq V,\; x(V) = f(V) }\right\}.
\end{align*}
This polytope $B(f)$ is non-empty for every submodular function $f$. Its elements are called {\em bases}, and its vertices are the
{\em extreme bases}.
Extreme bases correspond to permutations of the
ground set. More precisely, for any ordering $v_1,v_2,\ldots,v_{n}$ of the elements of $V$, the following point is a vertex of $B(f)$, and every vertex is of this form for some ordering:
\begin{equation}\label{eq:vertex}
\begin{aligned}
x(v_1)&:=f(\{v_1\}),\\
x(v_i)&:=f(\{v_1,\ldots,v_i\})-f(\{v_1,\dots,v_{i-1}\})\quad \forall
i=2,\ldots, {n}.
\end{aligned}
\end{equation}
Furthermore, given a weight function  $w:V\to \R$, one can compute an
extreme base minimizing $w^\T x$ by the greedy algorithm
\textsc{GreedyMin}$(f,w)$ as follows: order the vertices in $V$ so
that $w(v_1)\le w(v_2)\le\ldots\le w(v_{n})$, and output $x$ defined by \eqref{eq:vertex} as the optimal solution.
The value of the minimum-cost is then given by
\begin{equation}\label{eq:vertex-cost}
\min_{x\in B(f)} w^\T x=\sum_{i=1}^{n-1} f(\{v_1,\ldots,v_i\})(w(v_i)-w(v_{i+1}))+f(V)w(v_n).
\end{equation}

The subroutine \textsc{GreedyMin}$(f,w)$ requires $O({n}\cdot\EO+{n}\log {n})$
arithmetic operations. If $w$ has several entries with the same value, then there
are multiple ways to sort the elements of $V$ in ascending value of $w$, each ordering potentially giving rise to a different optimal extreme base
of $B(f)$.  The extreme bases corresponding to the possible
tie-breakings are the vertices of the face of $B(f)$ minimizing
$w^\T x$.

If $v_1,\ldots,v_n$ is the ordering computed by \textsc{GreedyMin}$(f,w)$, we define
\begin{equation}\label{eq:minset}
\textsc{MinSet}(f,w)\eqdef\mbox{argmin}\{ f(S)\st S=\{v_1,\ldots,v_i\} \exists i\in [n] \mbox{ or } S=\emptyset\}.
\end{equation}

A min-max characterization of \eqref{prob:sfm} was given by Edmonds:
\begin{theorem}[Edmonds \cite{edmonds1970}]\label{thm:edmonds}
For any submodular function $f:2^V\to \mathbb{R}$,
\begin{equation}
\max\{x^-(V)\st x\in B(f)\}=\min\{f(S)\st S\subseteq V\}.\label{eq:edmonds}
\end{equation}
\end{theorem}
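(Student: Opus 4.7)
I would prove the two inequalities of~\eqref{eq:edmonds} separately. For the easy direction $(\leq)$, I would argue weak duality directly: for any $x \in B(f)$ and $S \subseteq V$,
\[
x^-(V) = \sum_{v \in V} x(v)^- \;\le\; \sum_{v \in S} x(v)^- \;\le\; \sum_{v \in S} x(v) = x(S) \;\le\; f(S),
\]
where the first step drops nonpositive summands, the second uses $a^- \le a$, and the last uses $x \in B(f)$. Taking the max over $x$ and the min over $S$ yields the $(\le)$ direction of~\eqref{eq:edmonds}.

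For the hard direction $(\geq)$, my plan is to realize the LHS as the value of a saddle-point problem and invoke minimax. Using the identity $x^-(V) = \min_{S \subseteq V} x(S)$ (attained at $S = \{v : x(v) \le 0\}$), the LHS of~\eqref{eq:edmonds} equals $\max_{x \in B(f)} \min_S x(S)$. I would lift the inner discrete minimum over $2^V$ to a minimum over the simplex $\Delta(2^V)$ of distributions on $2^V$; since the objective $\mathbb{E}_{S \sim \pi}[x(S)] = \sum_v x(v) \Pr_\pi[v \in S]$ is linear in $\pi$, this relaxation preserves the value. The resulting bilinear saddle-point on the compact convex product $B(f) \times \Delta(2^V)$ lets von Neumann's minimax theorem apply, swapping the max and the min.

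It then remains to compute the inner maximum $\max_{x \in B(f)} p^\T x$ for $p(\pi)_v = \Pr_\pi[v \in S] \in [0,1]^V$. I would do this via the greedy formula~\eqref{eq:vertex-cost} applied to the weight $w = -p$, which orders vertices by decreasing $p$-value. A short calculation (together with $f(\emptyset)=0$) exhibits $\max_{x \in B(f)} p^\T x$ as a convex combination of values of $f$ along a chain $\emptyset \subset \{v_1\} \subset \cdots \subset V$; in particular this quantity is always at least $\min_S f(S)$, with equality when $\pi$ is a point mass on a minimizer $S^*$. Minimizing over $\pi$ thus yields $\min_S f(S)$, matching the LHS. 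The main obstacle I anticipate is the minimax swap---specifically, arguing that the discrete inner minimum lifts to a continuous one without loss so a standard minimax theorem applies---together with verifying the convex-combination structure of the inner maximum; the rest is routine, since weak duality is a direct computation and the greedy formula for $\max_{x \in B(f)} p^\T x$ is supplied by~\eqref{eq:vertex-cost}.
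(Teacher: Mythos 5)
The paper does not actually prove Theorem~\ref{thm:edmonds}; it cites Edmonds and refers the reader to Schrijver's book for background, so there is no in-paper argument to compare against. That said, your proposed proof is correct, and it is essentially the Lov\'asz-extension / LP-duality proof in a game-theoretic dress. The weak-duality chain is right. For the hard direction, the observation that the pairing $\mathbb{E}_{S\sim\pi}[x(S)]=\sum_v x(v)\Pr_\pi[v\in S]$ depends on $\pi$ only through the marginal vector $p=p(\pi)\in[0,1]^V$, which ranges over all of $[0,1]^V$, reduces the post-minimax problem to $\min_{p\in[0,1]^V}\max_{x\in B(f)}p^\T x$; the inner max is exactly the Lov\'asz extension $\hat f(p)$. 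Your chain decomposition is the standard one: ordering so that $p(v_1)\ge\cdots\ge p(v_n)$, the greedy formula gives
\[
\max_{x\in B(f)}p^\T x=(1-p(v_1))f(\emptyset)+\sum_{i=1}^{n-1}\bigl(p(v_i)-p(v_{i+1})\bigr)f(\{v_1,\ldots,v_i\})+p(v_n)f(V),
\]
a genuine convex combination (coefficients nonnegative, summing to $1$), so $\hat f(p)\ge\min_S f(S)$; taking $p=\mathbf 1_{S^*}$ for a minimizer $S^*$ gives equality, and the minimax theorem applies since $B(f)$ is a bounded polytope, $\Delta(2^V)$ is a simplex, and the payoff is bilinear. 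The argument is sound. It is worth noting, for context, that many treatments (e.g.\ Schrijver's) prove this theorem by a combinatorial exchange argument that also exhibits an optimal pair $(x,S)$; your route is nonconstructive but cleaner, trading the exchange argument for a minimax swap.
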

We will often use the following simple consequence. Assume that for some $x\in B(f)$, $S\subseteq V$, and $\varepsilon>0$,
we have $f(S)\le
x^-(V)+\varepsilon$. Then $f(S)\le f(T)+\varepsilon$ for any
$T\subseteq V$.

\paragraph{Complexity parameters}
When dealing with weakly polynomial time algorithms for \eqref{prob:sfm}, various complexity parameters have been considered in the literature to
measure the running time. All these parameters turn out to be
equivalent within an $O(n)$ factor. Nonetheless, in certain parts of the paper
different choices will fit more easily, hence we introduce all of them below. We define
\[\Lb{f}\eqdef \max\{\|z\|_1:z\in B(f)\},\ \ \Lbt{f}\eqdef
\max\{\|z\|_2:z\in B(f)\},\ \ \Fm{f}\eqdef \max\{|f(S)|:
S\subseteq V\}.\]

Some of our algorithms require the explicit knowledge of these complexity parameters. While these are hard to compute, we introduce easily computable upper-bounds that are essentially equivalent (see Claim~\ref{claim:complexity parameters}) to the ones above. These upper-bounds are given by the norms of the vector $\alpha:V\to\R$ defined by
\[\alpha(v):=\max\{|f(\{v\})|,|f(V)-f(V\setminus\{v\})|\},\quad v\in V.\]

In the following claim, we highlight that all these parameters are essentially equivalent  within an $O(n)$ factor.

\begin{claim}\label{claim:complexity parameters} For any submodular function $f\,:\, 2^V\to \R$ such that $f(\emptyset)=0$, the following hold.
\begin{enumerate}[(i)]
 \item $\Lbt{f}\le\Lb{f}\le \sqrt{n}\Lbt{f}$.
  \item $\Lb{f}=\Theta(\Fm{f})$.
  \item For every $z\in B(f)$, $|z(v)|\le \alpha(v)$ for all $v\in V$. Hence,  $\Lb{f}\le \|\alpha\|_1=\alpha(V)$ and $\Lbt{f}\le \|\alpha\|_2$.
  \item $\alpha(v)\le 2\Fm{f}$ for all $v\in V$.
  \end{enumerate}
\end{claim}
\begin{proof} (i) is obvious, for (ii) see e.g. \cite[Lemma 5]{chakrabarty16}, and also \cite{hazan2012,jegelka-bilmes2011}, for (iii) see \cite[Section 3.3]{fujishige-book}, and  (iv) follows immediately from the definition of $\alpha$.
\end{proof}

Our running time bounds will contain terms of the form $\log (nZ)$ for various choices of $Z\in
\{\Fm{f},\Lb{f},\Lbt{f},\|\alpha\|_1,\|\alpha\|_2\}$. The above lemma shows that all these terms are within a constant factor of one another,
hence the specific choice of complexity parameter does not matter.

\paragraph{The minimum-norm point problem}
Fujishige \cite{fujishige80} showed a reduction of \eqref{prob:sfm} to
the following convex
quadratic optimization problem.

\begin{theorem}[Fujishige \cite{fujishige80}]\label{thm:fuji}
Let $z$ be the unique optimal solution to
\begin{equation}
\min\left\{ \frac12 \|x\|^2_2\colon x \in B(f)\right\}.\label{eq:min-norm}
\end{equation}
Then, the set $S^*=\{v \in V\colon z(v) < 0\}$ is the inclusionwise minimal minimizer of \eqref{prob:sfm}, and $f(S^*)=z^-(V)$. Furthermore,  $|f(S^*)|\le \sqrt{n} \|z\|_2$.
\end{theorem}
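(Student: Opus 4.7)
The plan is to establish the four facts in order: uniqueness of $z$, that $S^*$ minimizes $f$, that $S^*$ is contained in every minimizer (hence is the unique inclusionwise minimal one), and the Cauchy--Schwarz-type bound. Uniqueness follows at once from compactness of $B(f)$ and strict convexity of $\tfrac12\|x\|_2^2$.

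The core of the argument is to show $z^-(V) = f(S^*)$; by Edmonds' theorem (Theorem~\ref{thm:edmonds}), this will force $S^*$ to attain $\min_S f(S)$. First-order optimality of $z$ reads $\langle z, y - z\rangle \ge 0$ for all $y \in B(f)$, so $\min_{y \in B(f)} z^\T y = \|z\|_2^2$ is attained at $y = z$. Writing $z = \sum_i \lambda_i y_i$ as a convex combination of extreme bases of $B(f)$ and using $z^\T y_i \ge \|z\|_2^2$ for every $i$, every extreme base $y_i$ used with positive weight must satisfy $z^\T y_i = \|z\|_2^2$.

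By the greedy characterization of extreme bases of $B(f)$ minimizing a linear function, each such $y_i$ is produced by \textsc{GreedyMin}$(f,z)$ for some ordering $v_1,\dots,v_n$ with $z(v_1)\le\cdots\le z(v_n)$. Since every $v \in S^*$ has $z(v) < 0$ while every $v \notin S^*$ has $z(v) \ge 0$, $S^*$ is a prefix $\{v_1,\dots,v_k\}$ in every such ordering, and formula~\eqref{eq:vertex} gives $y_i(S^*) = f(S^*)$. Therefore $z(S^*) = \sum_i \lambda_i y_i(S^*) = f(S^*)$, and since $z(S^*) = z^-(V)$ by the very definition of $S^*$, the desired identity $z^-(V) = f(S^*)$ follows, closing the minimization statement via Edmonds. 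The final bound is then a direct application of Cauchy--Schwarz: $|f(S^*)| = -z^-(V) = \sum_{v \in S^*}|z(v)| \le \sqrt{|S^*|}\,\|z\|_2 \le \sqrt{n}\,\|z\|_2$.

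For inclusionwise minimality, let $T$ be any minimizer of $f$; submodularity applied to $T$ and $S^*$ gives $f(T \cap S^*) + f(T \cup S^*) \le 2f(S^*)$, and since both terms are at least $f(S^*)$, we conclude that $T \cap S^*$ is also a minimizer. Then $z(T \cap S^*) \le f(T \cap S^*) = f(S^*) = z(S^*)$, whence $z(S^*\setminus T) \ge 0$; but every element of $S^*\setminus T$ has strictly negative $z$-value, forcing $S^* \subseteq T$. The main obstacle is the second and third paragraphs: one must correctly exploit first-order optimality together with the polytopal structure of $B(f)$ to show that $S^*$ is a prefix of every ordering compatible with $z$, which is precisely what produces the crucial identity $y_i(S^*) = f(S^*)$ and hence the key relation $z(S^*) = f(S^*)$.
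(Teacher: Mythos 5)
Your proof is correct. The paper itself gives no proof of this theorem---it is stated with a citation to Fujishige~\cite{fujishige80}---so there is no in-paper argument to compare against; what you have supplied is a valid, self-contained derivation using only the facts the paper does develop (the greedy characterization of the vertices of $B(f)$, equation~\eqref{eq:vertex}, and Edmonds' min--max theorem, Theorem~\ref{thm:edmonds}). Each step checks out: first-order optimality gives $z^\T y \ge \|z\|_2^2$ on $B(f)$ with equality at $z$, so any convex representation $z=\sum_i\lambda_i y_i$ must use only vertices of the minimizing face; those vertices arise from greedy orderings consistent with $z$, in all of which $S^*$ is a prefix because the negative coordinates strictly precede the nonnegative ones; hence $y_i(S^*)=f(S^*)$ for each $i$, giving $z(S^*)=f(S^*)=z^-(V)$ and optimality of $S^*$ via Edmonds. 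The Cauchy--Schwarz bound and the lattice argument for inclusionwise minimality (submodularity forces $T\cap S^*$ to be a minimizer, then $z(S^*\setminus T)\ge 0$ forces $S^*\setminus T=\emptyset$) are both standard and correct.
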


%%\begin{proof}
%%Note first that, for all $S\subseteq  V$,
%%\begin{equation}\label{eq:z-min}
%%z(S^*)\le z(S)\le f(S),
%%\end{equation}
%%where the first inequality follows by the choice of $S^*$, and  the second from $z\in B(f)$.
%%
%%To see that $S^*$ is a minimizer of \eqref{prob:sfm}, because of \eqref{eq:z-min} it suffices to show that $z(S^*)=f(S^*)$. Since $z$ is the closest point in
%%$B(f)$ to $0$, it follows that
%%$$
%%z^\T x\ge \|z\|^2_2\qquad \forall x\in B(f).
%%$$
%%Thus $z$ is on the face of $B(f)$ that minimizes the objective function $z^\T x$.
%%Every extreme base $x$ can be obtained
%%by \textsc{GreedyMin}$(f,z)$ with a possible tie-breaking in the
%%ordering of the vertices. Since $z(v)<z(u)$ whenever $v\in S^*$ and $u\in V\setminus
%%S^*$, we see that $S^*\in \mathcal{S}(z)$, and consequently,
%%$x(S^*)=f(S^*)$. Since this equation holds for every extreme base $x$ of $B(f)$ minimizing $z^\T x$, it also holds for all minimizers of  $z^\T x$, so in particular it holds for $z$. Consequently, $f(S^*)=z(S^*)$.
%%
%%
%%The final claim follows from the fact that  $|f(S^*)|=|z(S^*)|\le \|z\|_1\le \sqrt{n} \|z\|_2$.
%%\end{proof}

Note that in case of $f(V)=0$, Theorems~\ref{thm:edmonds} and
~\ref{thm:fuji} imply that the
minimizer of the 2-norm also minimizes the 1-norm  in $B(f)$. Indeed, in this case for any $y\in B(f)$ $y^-(V)=f(V)-y^+(V)=-y^+(V)$, implying $\|y\|_1=-2 y^-(V)$, which gives
$\|z\|_1=-2z^-(V)=-2f(S^*)=-2\max\{x^-(V)\st x\in B(f)\}=\min\{\|x\|_1 \st x\in B(f)\}$.

Note that $z\in B(f)$ is the minimum norm point if and only if $z^\top x\ge \|z\|_2^2$ for all $x\in B(f)$. An approximate optimal solution
to \eqref{eq:min-norm} can be converted to an approximate optimal
solution to \eqref{eq:edmonds}, as stated below.
\begin{theorem}[Bach
\cite{Bach-monograph}]
\label{thm:approx-convert}
Assume that $z\in B(f)$ satisfies $\|z\|^2_2\le z^\T
x+\varepsilon$ for all $x\in B(f)$, for some $\varepsilon>0$. Let
$S=\textsc{MinSet}(f,z)$. Then, $f(S)\le z^-(V)+\sqrt{2n\varepsilon}$.
Consequently, $f(S)\le f(T)+\sqrt{2n\varepsilon}$
 for any
$T\subseteq V$.
\end{theorem}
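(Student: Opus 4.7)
The plan is to extract a single weighted-sum inequality from the hypothesis and then run a one-dimensional averaging argument over thresholds near $0$. Let $v_1,\dots,v_n$ be the greedy ordering (so $z(v_1)\le\cdots\le z(v_n)$), let $x^*$ be the extreme base produced by $\textsc{GreedyMin}(f,z)$, and set $S_i=\{v_1,\dots,v_i\}$, $\mu_i\eqdef f(S_i)-z(S_i)$, and $d_i\eqdef z(v_{i+1})-z(v_i)\ge 0$. Each $\mu_i$ is nonnegative because $z\in B(f)$, and the boundary terms $\mu_0=\mu_n=0$ hold because $z(\emptyset)=0$ and $z(V)=f(V)$. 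Using $z(v_i)=z(S_i)-z(S_{i-1})$ and $x^*(v_i)=f(S_i)-f(S_{i-1})$, an Abel summation gives the clean identity
\[
\|z\|_2^2 - z^\T x^* \;=\; \sum_{i=1}^{n-1} d_i\,\mu_i.
\]
Specializing the hypothesis to $x=x^*$ then yields the single key estimate $\sum_{i=1}^{n-1} d_i\,\mu_i\le\delta^2$.

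Next I would recast this weighted sum as a one-dimensional integral. For $t\in\R$, set $S(t)\eqdef\{v\in V: z(v)<t\}$ (a prefix of the greedy ordering) and $\phi(t)\eqdef f(S(t))-z(S(t))\ge 0$. Then $\phi$ is piecewise constant, equal to $\mu_i$ on the interval $(z(v_i),z(v_{i+1})]$ and zero outside $(z(v_1),z(v_n)]$, so $\int_{\R}\phi(t)\,dt = \sum_{i=1}^{n-1} d_i\mu_i \le \delta^2$. A pigeonhole argument on the interval $[-\delta,\delta]$ of length $2\delta$ produces a good threshold: if $\phi(t)>n\delta$ held for every $t\in[-\delta,\delta]$, then $\int_{-\delta}^{\delta}\phi(t)\,dt > 2n\delta^2 > \delta^2$, a contradiction for $n\ge 1$. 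Hence some $t^*\in[-\delta,\delta]$ satisfies $\phi(t^*)\le n\delta$.

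To finish, I would control the shift $z(S(t^*))-z^-(V)$. Observing that $z^-(V)=z(S(0))$ and that $S(t^*)$ differs from $S(0)$ only in vertices $v$ with $z(v)$ lying between $0$ and $t^*$, each contributing a value of absolute size at most $|t^*|\le\delta$, one obtains $0\le z(S(t^*))-z^-(V)\le n\delta$. Combining,
\[
f(S(t^*)) \;=\; \phi(t^*)+z(S(t^*)) \;\le\; n\delta + z^-(V) + n\delta \;=\; z^-(V)+2n\delta.
\]
Since $S(t^*)$ is one of the prefix sets inspected by $\textsc{MinSet}(f,z)$, this gives $f(S)\le f(S(t^*))\le z^-(V)+2n\delta$; the ``consequently'' clause follows from the weak-duality direction of Theorem~\ref{thm:edmonds}, which supplies $z^-(V)\le f(T)$ for every $T\subseteq V$. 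The one delicate point I anticipate is the bookkeeping around the endpoints of the ordering (the boundary identities $\mu_0=\mu_n=0$ and the degenerate case $S(t^*)=\emptyset$, treated by allowing $\emptyset=S_0$ as a valid prefix), but in each case the level-set picture of $\phi$ keeps the accounting transparent.
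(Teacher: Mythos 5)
The paper does not include a proof of this statement; it cites \cite[Theorem~5]{chakrabarty14} and moves on, so there is no in-paper argument to compare against. Your proof is correct and follows the same thresholding-and-averaging strategy as the cited reference: the Abel-summation identity $\|z\|_2^2 - z^\T x^* = \sum_{i=1}^{n-1}d_i\mu_i$ with $\mu_0=\mu_n=0$, the reformulation as $\int_{\R}\phi\,dt\le\delta^2$ for the piecewise-constant nonnegative level-set function $\phi$, and the pigeonhole over $[-\delta,\delta]$ combined with the $n\delta$ bound on the shift $z(S(t^*))-z^-(V)$.

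Two of the ``delicate points'' you mention in passing really do need to be handled, not merely acknowledged. First, $S(t^*)$ can genuinely equal $\emptyset$, while \eqref{eq:minset} as written ranges over $i\in[n]=\{1,\dots,n\}$; your argument (and indeed the theorem as literally stated) requires allowing $i=0$ with $S_0=\emptyset$. Without that convention the statement is false: take $n=1$, $f(\emptyset)=0$, $f(\{v\})=1$, so $B(f)=\{1\}$ and $z=1$ satisfies the hypothesis with $\delta=0$, yet the only nonempty prefix has $f(\{v\})=1>0=z^-(V)$. So this is a small but necessary definitional correction. Second, the pigeonhole step $\int_{-\delta}^{\delta}\phi>2n\delta^2>\delta^2$ presupposes $\delta>0$; for $\delta=0$ the interval has measure zero and no contradiction arises, but in that case $z$ is the exact minimum-norm point and the conclusion follows directly from Theorem~\ref{thm:fuji} applied to $S(0)=\{v:z(v)<0\}$. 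With those two points spelled out, the argument is complete.
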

The above is a consequence of the more general statement in \cite[Proposition 10.5]{Bach-monograph}. Since it might not be immediate to see such implication, for completeness we provide a proof of the previous theorem in the appendix.

\subsection{Iterative methods for SFM}\label{sec:iterative methods}
Convex optimization algorithms can be naturally applied to SFM, either by solving the quadratic
formulation~\eqref{eq:min-norm}, or by minimizing the so-called {\em
Lov\'asz-extension}, which we do not discuss here. We refer the reader to \cite{Bach-monograph} for
an overview of such algorithms. Here, we briefly outline two important algorithms
based on \eqref{eq:min-norm}.

\paragraph{The conditional gradient algorithm}
The conditional gradient, or Frank-Wolfe algorithm, maintains a point
$y\in B(f)$, represented as a convex combination $y=\sum_{i=1}^k \lambda_ig_i$
of extreme bases. It is initialized with $y=g$ for an arbitrary
extreme base $g$. Every iteration runs \textsc{GreedyMin}$(f,y)$ to
obtain  an extreme base $g'$. If $y^\T g'\ge \|y\|_2^2$, then $y$ is
the minimum-norm point in $B(f)$, and the algorithm
terminates. Otherwise, $y$ is replaced by the minimum-norm point
$y'$ on the line segment  $[y,g']$. This is also known as the von Neumann algorithm, described by Dantzig \cite{Dantzig-92}. The algorithm was first used  to test membership
in the base polytope by Sohoni~\cite{Sohoni}.
The standard convergence analysis of the conditional gradient algorithm and Theorem~\ref{thm:approx-convert}
provide the following convergence bound (see
e.g. \cite[Sec 10.8]{Bach-monograph}).

\begin{theorem}\label{thm:cond-grad}
For any $\delta>0$, within $O(n/\delta^2)$ iterations the
conditional gradient algorithm computes $y\in B(f)$ such that for
$S=\textsc{MinSet}(f,y)$, we have
  $f(S)\le y^-(V)+ O(\delta\Lbt{f})$. The total running time is
  $O((n^2 \cdot\EO+n^2\log n)/\delta^2)$.
\end{theorem}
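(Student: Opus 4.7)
The plan is to combine the textbook Frank--Wolfe convergence rate for smooth convex minimization on $B(f)$ with the approximate-gap-to-SFM conversion provided by Theorem~\ref{thm:approx-convert}. The only properties of the problem I would use are that the objective $\tfrac12\|x\|_2^2$ is $1$-smooth in $\|\cdot\|_2$ and that the Euclidean diameter of $B(f)$ is at most $2\Lbt{f}$.

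For the convergence rate, let $g_t' := \textsc{GreedyMin}(f,y_t)$, so that $y_t^\T g_t' = \min_{x\in B(f)} y_t^\T x$, and set $G_t := \|y_t\|_2^2 - y_t^\T g_t'$. Exact line search of $\tfrac12\|\cdot\|_2^2$ on the segment $[y_t, g_t']$ yields the classical progress bound
\[
\tfrac12\|y_{t+1}\|_2^2 \le \tfrac12\|y_t\|_2^2 - \tfrac12\min\!\left\{G_t,\; G_t^2/\|y_t-g_t'\|_2^2\right\}.
\]
Using $\|y_t - g_t'\|_2 \le 2\Lbt{f}$ and the standard inductive amortization (splitting iterations according to whether the step size is clipped to $1$ or not), after $T$ iterations at least one iterate $y$ satisfies $G := \|y\|_2^2 - \min_{x\in B(f)} y^\T x = O(\Lbt{f}^2/T)$. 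Feeding $\delta_0 := \sqrt{G}$ into Theorem~\ref{thm:approx-convert} gives $f(\textsc{MinSet}(f,y)) \le y^-(V) + 2n\delta_0 = y^-(V) + O(n\Lbt{f}/\sqrt{T})$. Setting $T = \Theta(n^2/\delta^2)$ makes this $O(\delta\Lbt{f})$ by a direct composition; to obtain the sharper $O(n/\delta^2)$ iteration count quoted in the statement one follows the coupled analysis of Bach~\cite{Bach-monograph}, Section~10.8, in which the Frank--Wolfe descent and the conversion inequality are combined in one inductive argument rather than in series.

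For the running time, each iteration performs one call to \textsc{GreedyMin}$(f,y_t)$ at cost $O(n\cdot\EO + n\log n)$, plus $O(n)$ arithmetic for computing $y_t^\T g_t'$, solving the one-dimensional quadratic minimization on $[y_t,g_t']$, and updating the convex-combination representation of $y_{t+1}$. Multiplying by the iteration count gives the claimed $O((n^2\cdot\EO + n^2\log n)/\delta^2)$ bound. There is no genuine obstacle here: the mildly delicate point is just the bookkeeping that relates the Wolfe-gap parameter $\delta_0$ appearing in Theorem~\ref{thm:approx-convert} to the target SFM error $\delta\Lbt{f}$, together with keeping track of where the factor $\Lbt{f}$ enters through the diameter of $B(f)$.
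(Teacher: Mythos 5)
Your proposal matches the paper's level of rigor: the paper states this theorem without proof, citing Bach~\cite[Sec.~10.8]{Bach-monograph}, and your final appeal to Bach's analysis for the sharper $O(n/\delta^2)$ iteration count is the same move. The useful content you add is the explicit observation that a naive serial composition of the standard Frank--Wolfe duality-gap bound $\min_{t\le T} G_t = O(\Lbt{f}^2/T)$ with Theorem~\ref{thm:approx-convert} (whose conversion error is $2n\sqrt{G}$) yields only $O(n^2/\delta^2)$ iterations; this is a genuine factor-$n$ discrepancy, and you are right that the $O(n/\delta^2)$ count requires a tighter argument than the plain two-step chain that the paper's preamble (``the standard convergence analysis of the conditional gradient algorithm and Theorem~\ref{thm:approx-convert} provide the following convergence bound'') appears to describe. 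Your per-iteration running-time accounting --- one \textsc{GreedyMin} call at $O(n\cdot\EO + n\log n)$ plus $O(n)$ arithmetic for the line search and the convex-combination update --- is correct and recovers the stated total once the iteration bound is granted.
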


\paragraph{The Fujishige-Wolfe algorithm} Wolfe \cite{wolfe} gave
a finite algorithm for finding the minimum-norm point in
a polytope given by its vertices; his algorithm can also be
interpreted as an active set method \cite{Bach-monograph}.
Fujishige adapted  Wolfe's algorithm to SFM
\cite{fujishige80,fujishige11}.
We now give a brief sketch of the
algorithm; for a more detailed description see
\cite{chakrabarty14,fujishige11,wolfe}.

An affinely independent set of points  $X\in \mathbb{R}^n$ is called a
\emph{corral} if the orthogonal projection of $0$ onto the affine hull of $X$
is in the relative interior of the convex hull of $X$. In particular, the optimal
solution to the minimum-norm point problem can be obtained by a
corral, comprising vertices of the minimal face of the
polytope containing the minimum-norm point.

Every \emph{major cycle} of the Fujishige-Wolfe algorithm starts and
ends with a
corral formed by extreme bases in $B(f)$. The algorithm is initialized
with an arbitrary extreme base  (note that every singleton set is a corral). Let $X$ be the
corral at the beginning of a major cycle, and let $y$ be the
projection of $0$ onto the affine hull of $X$; this can be obtained by a
closed-form formula. Let us run \textsc{GreedyMin}$(f,y)$ to obtain an
extreme base $g'$ minimizing $y^\T g'$.  If $y^\T g'\ge \|y\|_2^2$, then the algorithm
terminates returning $y$ as
the minimum-norm point in $B(f)$. Otherwise, we consider
$X'=X\cup\{g'\}$, which is also affinely independent. We set $\bar x=
y$, and compute $y'$ as
the projection of $0$ onto the affine hull of $X'$.  If $y'$ is in the relative interior of
$\conv(X')$, the major cycle terminates with the new corral
$X'$. Otherwise, we start a \emph{minor cycle}: we replace $X'$ by the
extreme points of the minimal face of  $\conv(X')$  intersecting the line segment
$[\bar x,y']$; the new
$\bar x$ is defined to be the unique intersection point.
Minor cycles are repeated until a corral is obtained. Finite
convergence is guaranteed since
$\|\bar x\|_2$ decreases in every major and minor cycle, and the
number of corrals is finite.
However, a bound on the convergence rate, which we summarize below, was only recently given in
\cite{chakrabarty14}. Further improvements on the analysis were observed by Lacoste and Jaggi in \cite{Lacoste-Julien}.

\begin{theorem}[Chakrabarty et al. \cite{chakrabarty14}, Lacoste and Jaggi \cite{Lacoste-Julien}]\label{thm:wolfe}
For any $\delta>0$, within $O(n/\delta^2)$ iterations (major and
minor cycles)  Fujishige-Wolfe algorithm computes $y\in B(f)$ such that,
for
$S=\textsc{MinSet}(f,y)$, we have
  $f(S)\le y^-(V)+ O(\delta\Lbt{f})$. The total running time is $O((n^2\cdot \EO+n^3)/\delta^2)$.
\end{theorem}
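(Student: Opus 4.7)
The plan is to reduce the stated SFM error bound to an approximate minimum-norm condition via Theorem~\ref{thm:approx-convert}, and then to establish convergence of Wolfe's algorithm by tracking the monotone decrease of $\|\bar x\|_2^2$. Concretely, by Theorem~\ref{thm:approx-convert} it suffices to show that within $O(n^2/\delta^2)$ iterations the algorithm produces $y \in B(f)$ with $\|y\|^2 \le y^\T x + \eta^2$ for every $x \in B(f)$, where $\eta = \Theta(\delta\Lbt{f}/n)$. Because \textsc{GreedyMin}$(f,y)$ returns an extreme base $g'$ that minimizes $y^\T x$ over $B(f)$, this condition is equivalent to the approximate termination $\|y\|^2 - y^\T g' \le \eta^2$, an obvious relaxation of Wolfe's exact stopping rule.

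I would first record the structural invariants of Wolfe's algorithm: at the start of every major cycle, $y$ is the orthogonal projection of $0$ onto $\aff(X)$ for the current corral $X$ and lies in $\relint(\conv(X))$, so in particular $y^\T v = \|y\|^2$ for every $v \in X$; moreover $\|\bar x\|_2$ strictly decreases through every major and minor cycle. Since a minor cycle drops at least one element from the current affinely independent set, a standard potential argument bounds the total number of minor cycles by the number of major cycles plus $n$, so up to a constant factor it suffices to count major cycles.

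The technical heart is a per-cycle progress lemma: if in some major cycle the greedy vertex $g'$ violates the approximate termination $\|y\|^2 - y^\T g' > \eta^2$, then projecting $0$ onto $\aff(X \cup \{g'\})$ produces $y'$ with
\[
\|y\|^2 - \|y'\|^2 \;=\; \frac{(\|y\|^2 - y^\T g')^2}{\|g' - \proj_{\aff(X)}(g')\|_2^2},
\]
and the denominator is $O(\Lbt{f}^2)$ since $\|g'\|_2, \|\proj_{\aff(X)}(g')\|_2 = O(\Lbt{f})$ (the latter because the projection can be taken in the bounded face). Minor cycles only further decrease $\|\bar x\|_2^2$, so this decrease persists to the next major cycle. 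Summing decreases against the bound $\|\bar x_0\|_2^2 \le \Lbt{f}^2$ yields a bound on the number of major cycles, and plugging in $\eta = \Theta(\delta \Lbt{f}/n)$ gives the iteration count. The per-iteration cost is $O(n\cdot\EO + n \log n)$ for \textsc{GreedyMin} plus $O(n^2)$ for the projection update (using rank-one Cholesky-type updates, as in Wolfe's original implementation), leading to the stated total of $O((n^3 \cdot \EO + n^4)/\delta^2)$.

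The main obstacle is the tight quantitative control of progress per cycle. The elementary version of the projection identity above yields only a weaker iteration bound; sharpening it to the stated $O(n^2/\delta^2)$ requires carefully exploiting the corral invariant $y^\T v = \|y\|^2$ for all $v \in X$ (which is stronger than the $y \in \conv(X)$ property used in plain Frank--Wolfe) together with precise bookkeeping of how minor cycles amortize against major-cycle progress. This refined analysis is the main contribution of \cite{chakrabarty14}, and following their argument (together with the improved per-iteration complexity pointed out in the footnote after the theorem) is what delivers both the iteration count and the overall running time stated in Theorem~\ref{thm:wolfe}.
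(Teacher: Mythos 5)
The paper does not actually prove Theorem~\ref{thm:wolfe}; it is imported directly from Chakrabarty et al.\ \cite{chakrabarty14}, the only local contribution being the footnote that replaces their $O(n^3)$ per-iteration arithmetic estimate by Wolfe's own $O(n^2)$ implementation, which accounts for the $O((n^3\cdot\EO+n^4)/\delta^2)$ total. Your sketch correctly identifies the ingredients used in that cited analysis: the reduction via Theorem~\ref{thm:approx-convert} to an approximate Frank--Wolfe termination condition $\|y\|^2 - y^\T g' \le \eta^2$ with $\eta = \Theta(\delta\Lbt{f}/n)$; the corral invariant $y^\T v = \|y\|^2$ for $v\in X$; the Pythagorean per-cycle decrease
\[
\|y\|^2 - \|y'\|^2 \;=\; \frac{(\|y\|^2 - y^\T g')^2}{\|g'-\proj_{\aff(X)}(g')\|_2^2},
\]
which indeed follows from the orthogonal-projection chain argument; the bound $\|g'-\proj_{\aff(X)}(g')\|_2\le 2\Lbt{f}$ (best justified by comparing $g'$ to any single element of $X$, rather than by arguing the projection itself is bounded); and the observation that minor cycles can be charged to major cycles. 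All of this is consistent with how the proof actually goes.

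The genuine issue, which you acknowledge yourself, is that naively telescoping the per-cycle drop against $\|\bar x_0\|^2\le\Lbt{f}^2$ yields only $O(\Lbt{f}^4/\eta^4) = O(n^4/\delta^4)$ major cycles, not the claimed $O(n^2/\delta^2)$. Closing that gap requires the sharper recursive/amortized argument of \cite{chakrabarty14} (together with a Jaggi-style argument to pass from a bound on the primal gap to one on the Frank--Wolfe gap), which you invoke but do not reconstruct. Since the paper likewise delegates the proof entirely to the cited reference, your treatment sits at the same level of completeness as the paper's own: it is a structurally correct outline with an explicitly deferred quantitative core, not a self-contained proof.
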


%%% Local Variables:
%%% mode: latex
%%% TeX-master: "Rescaled-LP-sfm"
%%% End:

\section{Weakly polynomial algorithm via rescaling}\label{sec:weakly}

Throughout this section we assume that $f$ is an {\bf integer valued} submodular function. The assumption that $f$ is integer
valued is needed in the context of weakly polynomial-time algorithms..

\paragraph{The geometric rescaling algorithm} The Full Support Image Algorithm in \cite[Section 3.2]{part-1} is applicable to the
following oracle setting. Let $\Sigma\subseteq \mathbb{R}^n$ be
non-empty, full
dimensional cone; our aim is to find a feasible point in the interior.
We are given a separation oracle for $\intr(\Sigma)$; that is, for any
vector $w$, the oracle decides whether $w\in \intr(\Sigma)$, and if
not, it returns a vector $z$ such that $z^\T w\le 0$ but $z^\T
y> 0$ for all $y\in \intr(\Sigma)$. Then the algorithm finds a point in
$\intr(\Sigma)$ in  $O(n^3 \log \omega^{-1})$ calls to the separation oracle, where
$\omega$ is a condition number which we will define in
Section~\ref{sec:weakly-analysis}. We remark that the parameter $\omega$ can be
lower bounded by the {\em width of the cone} $\Sigma$, defined as the radius of the largest ball contained in $\Sigma$ and centered on the surface of the unit sphere.
\medskip

Consider now a submodular function $f$ with $f(V)=0$.
Assume we want to decide whether $f(S)\ge 0$ for all $S\subseteq V$,
that is, if $S=\emptyset$ is an optimal solution to
\eqref{prob:sfm}. It follows from the definition of the base polytope that
$0\leq f(S)$ for every $S\subseteq V$ if and only if  $0\in B(f)$ (note that
$f(V)=0$ is needed for this equivalence). Consider now the
cone
\begin{equation}\label{eq:sigma}
\Sigma=\{w\in\R^n\st w^\T y\ge
0 \quad \forall y\in B(f)\}
\end{equation}
(that is, $\Sigma$ is the negative of the polar cone of $B(f)$).

\begin{remark}
The cone $\Sigma$ has a non-empty interior if and only if
$0\notin B(f)$.
\end{remark}
\begin{proof} Observe that $0\notin B(f)$ if and only if there exists an hyperplane weakly separating $0$ and $B(f)$, that is,
$0\notin B(f)$ if and only if there exists $w\in\R^n$ such that $w^\T y>0$ for all $y\in B(f)$, i.e., there exists $w$ is in the interior of $\Sigma$.
\end{proof}

A separation oracle for $\intr(\Sigma)$ is provided by \textsc{GreedyMin}, since for every
$y\in\R^V$ we have $y\in\intr(\Sigma)$ if and only if $\min_{x\in
  B(f)} y^\T x>0$.
Consequently, if the algorithm does not terminate in the required
running time bound, we can conclude that $f(S)\ge 0$ for all
$S\subseteq V$. We could use this algorithm in a binary search framework
to solve \eqref{prob:sfm}. When querying $\min_{S\subseteq V} f(S)\ge
-\mu$ for a $\mu>0$, we shift $f(S)$ by $f(S)+\mu$ for every
$S\subsetneq V$, $S\neq \emptyset$.

The main drawback of the binary search scheme is that it only provides the
optimum value, but does not give either an optimal set $S$, nor a dual
certificate as in Theorem~\ref{thm:edmonds}. Also, the
 binary search leads to an extra $\log \Fm{f}$ factor  in the running
 time.

In this section, we describe a variant of this algorithm, which
provides a primal optimal solution, and does not require binary
search. This will be achieved by dynamically shifting or ``sliding'' the function $f$
throughout the algorithm, as explained below. However, the algorithm
does not directly return a dual certificate of optimality. This can be
obtained using the pull-back technique introduced
in Section~\ref{sec:farkas}; see also the remark after
Theorem~\ref{thm:oracle-running time}.

We start by describing the sliding framework.
Besides the geometric rescaling algorithm described next, this
technique will also be useful for devising simple cutting plane
algorithms for SFM in Section~\ref{sec:ellipsoid}.

\paragraph{Sliding the function}
Throughout the algorithm, we  maintain a value $\mu\in\Z_+$, along
with a set $W$, such that $f(W)=-\mu$.
We initialize
$\mu=\max\{0,-f(V)\}$, and set $W=\emptyset$ or $W=V$ accordingly.
Hence $-\mu$ gives an upper bound on
$\min_{S\subseteq V} f(S)$.
The algorithm terminates once it
concludes that $f(W)=\min_{S\subseteq V} f(S)$ for the current $W$.
We define the function $f_\mu:2^V\to\Z$ as
\begin{equation}\label{eq:f-mu}
f_\mu(S)\eqdef\begin{cases}
0,\quad&\mbox{if }S=\emptyset \mbox{ or }S=V,\\
f(S)+\mu,\quad &\mbox{otherwise}.
\end{cases}
\end{equation}
This operation is known as the $\mu$-enlargement of the function $f$ (see
Fujishige \cite[Section 3.1(d)]{fujishige-book}). The operation has been
used in the context of submodular function minimization in~\cite{Fujishige2002}.

\begin{lemma}\label{lem:f-mu}
For a submodular function $f$ and a value $\mu\ge \max\{0,-f(V)\}$, the function
$f_\mu$ is submodular. If $0\in B(f_\mu)$, then $-\mu\le f(S)$ for
every $S\subseteq V$. Furthermore, $B(f_\mu)\subseteq B(f_{\mu'})$
whenever $\mu\le \mu'$.
\end{lemma}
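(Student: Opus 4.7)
The plan is to verify the three claims in turn, with the submodularity of $f_\mu$ being by far the most delicate (and the only place where the hypothesis $\mu\ge\max\{0,-f(V)\}$ is genuinely used).

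For submodularity, I would fix arbitrary $X,Y\subseteq V$ and verify $f_\mu(X)+f_\mu(Y)\ge f_\mu(X\cap Y)+f_\mu(X\cup Y)$ by case analysis on how many of the four sets $X,Y,X\cap Y,X\cup Y$ fall into the ``exceptional'' set $\{\emptyset,V\}$ where $f_\mu$ differs from $f+\mu$. When none of them is exceptional, the two sides differ from the corresponding inequality for $f$ by $+2\mu$ on both sides, so the claim reduces directly to submodularity of $f$. The interesting cases are when $X\cap Y=\emptyset$ or $X\cup Y=V$ (or both). In the first case the left-hand side gains a $+2\mu$ relative to $f$ while the right-hand side gains only $+\mu$ (since $f_\mu(\emptyset)=0$), and the needed inequality becomes $f(X)+f(Y)\ge f(X\cup Y)-\mu$, which follows from $f(X)+f(Y)\ge f(\emptyset)+f(X\cup Y)=f(X\cup Y)$ together with $\mu\ge 0$. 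In the second case the right-hand side loses $\mu$ (since $f_\mu(V)=0$ instead of $f(V)+\mu$), and after using submodularity of $f$ the required estimate reduces to $f(V)+\mu\ge 0$, i.e. precisely $\mu\ge -f(V)$. The doubly degenerate case $X\cap Y=\emptyset$, $X\cup Y=V$ combines both bounds, giving $f(X)+f(Y)+2\mu\ge 0$ via $f(X)+f(Y)\ge f(V)\ge -\mu$. The remaining cases where $X$ or $Y$ itself is $\emptyset$ or $V$ are immediate since submodularity degenerates to equality.

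The second claim is a direct unpacking of $0\in B(f_\mu)$. Membership forces $0=0(S)\le f_\mu(S)$ for every $S$, so for $\emptyset\subsetneq S\subsetneq V$ we get $f(S)\ge -\mu$; for $S=\emptyset$ the bound $f(\emptyset)=0\ge -\mu$ holds since $\mu\ge 0$; for $S=V$ the equality $0=0(V)=f_\mu(V)$ together with $\mu\ge -f(V)$ gives $f(V)\ge -\mu$.

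For the monotonicity of the base polytopes, I would take $x\in B(f_\mu)$ and verify the defining inequalities of $B(f_{\mu'})$. For $S\notin\{\emptyset,V\}$ we have $f_{\mu'}(S)-f_\mu(S)=\mu'-\mu\ge 0$, so $x(S)\le f_\mu(S)\le f_{\mu'}(S)$; for $S\in\{\emptyset,V\}$ both $f_\mu$ and $f_{\mu'}$ equal $0$, and in particular the tight constraint $x(V)=f_\mu(V)=0=f_{\mu'}(V)$ is preserved, establishing $x\in B(f_{\mu'})$.

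The only real obstacle is the bookkeeping in the submodularity proof; the crucial observation to isolate up front is that both ``corrections'' (adding $\mu$ off the exceptional set, and overriding $f_\mu(V)=0$) are safe precisely under the two halves of the hypothesis $\mu\ge\max\{0,-f(V)\}$, so the case split can be organized by which of the four sets $X,Y,X\cap Y,X\cup Y$ lies in $\{\emptyset,V\}$ without any further cleverness.
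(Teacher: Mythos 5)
Your proof is correct and follows essentially the same approach as the paper: compare $f_\mu$ against the trivially submodular $f+\mu$ and control the modifications at $\emptyset$ and $V$, then read off the second and third claims directly from the definitions. The paper packages your case analysis for the first claim into a single observation -- whenever $\emptyset$ or $V$ appears in $\{X,Y\}$ it appears at least as often in $\{X\cap Y, X\cup Y\}$, so decreasing $f+\mu$ at $\emptyset,V$ lowers the right-hand side of \eqref{eq:submod} by at least as much as the left -- which subsumes all of your cases at once.
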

\begin{proof}
The function $f'$ defined by $f'(S)=f(S)+\mu$ for all $S\subseteq V$ is clearly submodular. We obtain $f_\mu$
from $f'$ by decreasing the value of $f'(\emptyset)$ and $f'(V)$; note
that the bound on $\mu$ guarantees that these are both
nonnegative. Submodularity is maintained, since for any choice of $X$
and $Y$, the right-hand-side in
\eqref{eq:submod} decreases by at least as much as the left-hand-side when
replacing $f'$ by $f_\mu$. If $0\in B(f_\mu)$, then $0\le f_\mu(S)$
for any $S\subseteq V$. If $S\notin\{\emptyset,V\}$, then this gives
$f(S)\ge -\mu$; the choice of $\mu$ guarantees the same for
$S=\emptyset$ and $S=V$. For $\mu'\ge \mu$,  the containment $B(f_\mu)\subseteq
B(f_{\mu'})$ follows, since the constraints $x(S)\le f_{\mu'}(S)$ are
implied by the constraints $x(S)\le f_\mu(S)$.
\end{proof}

The following Lemma will be used to update the value of $\mu$.
\begin{lemma}\label{lem:minset}
Consider a value $\mu\ge \max\{0,-f(V)\}$, and
let $w:V\to \R$ be a cost function such that $\min\{w^\T x\colon
x\in B(f_\mu)\}>0$.
For $S=\textsc{MinSet}(f_\mu,w)$,
we have
$f(S)<-\mu$.
\end{lemma}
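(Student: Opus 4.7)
The plan is to combine the greedy formula \eqref{eq:vertex-cost} for the minimum cost over $B(f_\mu)$ with the definition of $f_\mu$. Letting $v_1,\ldots,v_n$ be the ordering produced by $\textsc{GreedyMin}(f_\mu,w)$, so that $w(v_1)\le w(v_2)\le\ldots\le w(v_n)$, the formula \eqref{eq:vertex-cost} gives
\[
\min_{x\in B(f_\mu)} w^\T x \;=\; \sum_{i=1}^{n-1} f_\mu(\{v_1,\ldots,v_i\})\bigl(w(v_i)-w(v_{i+1})\bigr) + f_\mu(V)\,w(v_n).
\]
Since $\mu \ge \max\{0,-f(V)\}$, we have $f_\mu(V)=0$ by definition, so the last term vanishes. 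Moreover each coefficient $w(v_i)-w(v_{i+1})$ is nonpositive.

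From the hypothesis $\min_{x\in B(f_\mu)} w^\T x>0$, the remaining sum is strictly positive, which forces at least one index $i\in\{1,\ldots,n-1\}$ with $w(v_i)<w(v_{i+1})$ and $f_\mu(\{v_1,\ldots,v_i\})<0$. In particular, among the candidate sets $\{v_1,\ldots,v_i\}$ with $i\in[n]$ there is at least one with strictly negative $f_\mu$-value.

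Now $S=\textsc{MinSet}(f_\mu,w)$ minimizes $f_\mu$ over these candidate sets, so $f_\mu(S)<0$. Since $f_\mu(V)=0$, we must have $S\neq V$, and since $S$ is of the form $\{v_1,\ldots,v_i\}$ with $i\ge 1$ we also have $S\neq \emptyset$. For such $S$ the definition \eqref{eq:f-mu} yields $f_\mu(S)=f(S)+\mu$, and therefore $f(S)=f_\mu(S)-\mu<-\mu$, as required.

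The only subtlety is ruling out $S\in\{\emptyset,V\}$ (where $f_\mu$ has been forced to $0$); this is handled automatically by the strict inequality $f_\mu(S)<0$ together with the fact that the index $i$ identified above lies strictly below $n$. No further obstacle arises — the argument is a direct calculation using the greedy formula and the definition of the $\mu$-enlargement.
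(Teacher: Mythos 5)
Your proof is correct and follows essentially the same approach as the paper: apply the greedy formula \eqref{eq:vertex-cost}, note that $f_\mu(V)=0$ kills the last term and all coefficients $w(v_i)-w(v_{i+1})$ are nonpositive, and deduce that some prefix has strictly negative $f_\mu$-value. Your explicit check that $S\neq\emptyset$ and $S\neq V$ before applying $f_\mu(S)=f(S)+\mu$ is a small extra step that the paper leaves implicit, and it is worth keeping.
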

\begin{proof}
Let $v_1,\ldots,v_n$ be the ordering of $V$ returned by
\textsc{GreedyMin}$(f_\mu,w)$ such that  $w(v_1)\le w(v_2)\le\ldots\le w(v_n)$.
From \eqref{eq:vertex-cost} and from the fact that $f_\mu(V)=0$ (by construction) we see that the minimum value of
$w^\T x$ over $B(f_\mu)$ can be written as
\[
w^\T x=\sum_{i=1}^{n-1} (f(\{v_1,\ldots,v_i\})+\mu)(w(v_i)-w(v_{i+1})).
\]
Since $w^\T x>0$ and $w(v_i)-w(v_{i+1})\le 0$ for $i=1,\ldots,n-1$,  it follows that  $f(\{v_1,\ldots,v_i\})<-\mu$ for some value of $i$, implying the claim.
\end{proof}

\begin{lemma}\label{lem:vector-length}
Consider a value $\mu\ge \max\{0,-f(V)\}$ such that $\mu=-f(W)$ for
some $W\subseteq V$.
Then, $\Lb{f_\mu}\le 4\Lb{f}$.
\end{lemma}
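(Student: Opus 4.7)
The plan is to bound $\|z\|_1$ directly for every $z \in B(f_\mu)$, exploiting that $f_\mu(V)=0$. Since every $z\in B(f_\mu)$ satisfies $z(V)=f_\mu(V)=0$, the positive and negative parts balance: $z^+(V)=-z^-(V)$, and therefore $\|z\|_1 = 2 z^+(V)$. So it suffices to show $z^+(V) \le 2\Lb{f}$.

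Next, let $S^+ \eqdef \{v\in V\st z(v) > 0\}$, so that $z^+(V) = z(S^+) \le f_\mu(S^+)$ by the defining inequalities of $B(f_\mu)$. If $S^+ \in \{\emptyset, V\}$ the right-hand side is $0$; otherwise $f_\mu(S^+) = f(S^+)+\mu \le \Fm{f}+\mu$. The assumption $\mu = -f(W) \ge 0$ yields $\mu = |f(W)| \le \Fm{f}$, and combining we get $\|z\|_1 \le 2(\Fm{f}+\mu) \le 4\Fm{f}$.

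To convert this into the stated bound $\Lb{f_\mu}\le 4\Lb{f}$, I would note that $\Fm{f} \le \Lb{f}$: for any $S\subseteq V$, ordering the elements so that those of $S$ come first in \textsc{GreedyMin} yields an extreme base $x\in B(f)$ with $x(S)=f(S)$, whence $\|x\|_1 \ge |x(S)| = |f(S)|$; taking the maximum over $S$ gives $\Lb{f}\ge \Fm{f}$.

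No step is a real obstacle here; the only conceptual point is to recognise that enforcing $f_\mu(V)=0$ forces every base to have zero sum, which reduces an $\ell_1$ bound to a bound on the positive part, and that in turn is controlled set-by-set through the defining inequalities of the base polytope.
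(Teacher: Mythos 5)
Your proof is correct, but it takes a genuinely different route from the paper's. The paper compares, for each ordering of $V$, the extreme base $g$ of $B(f)$ with the corresponding extreme base $g'$ of $B(f_\mu)$: these agree except in the first and last coordinates, which shift by $+\mu$ and $-\mu - f(V)$ respectively, giving $\|g'\|_1 \le \|g\|_1 + 2\mu + |f(V)|$. It then bounds $\mu \le \Lb{f}$ (by considering the extreme base from any ordering that begins with $W$) and $|f(V)|\le\Lb{f}$, yielding the constant $4$. Your argument is ``global'' rather than ``vertex-by-vertex'': you exploit that $f_\mu(V)=0$ forces $z(V)=0$ for every $z\in B(f_\mu)$, so $\|z\|_1 = 2z^+(V)$; then the single constraint $z(S^+)\le f_\mu(S^+)$ controls the positive part, and you close via $\mu\le \Fm f\le \Lb f$. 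This uses the reduction $z(V)=0 \Rightarrow \|z\|_1 = 2z^+(V)$, which is the same observation the paper makes informally after Theorem~\ref{thm:fuji} (where $\|y\|_1 = -2y^-(V)$), and it routes through the auxiliary parameter $\Fm f$, which the paper's proof avoids. Both give the same constant. One could argue the paper's proof is the more natural precursor to the computation in Lemma~\ref{lem:slide-back} (which again tracks how extreme bases change coordinate-wise under sliding), but your version is clean, self-contained, and requires only the defining inequalities of $B(f_\mu)$ rather than the explicit vertex formula~\eqref{eq:vertex}.
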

\begin{proof}
For any permutation of the ground set, let $g$ and $g'$ be the
corresponding extreme bases in $B(f)$
and in $B(f_\mu)$, respectively.
These only differ in the first and last components: respectively by
$+\mu$, and by $-\mu-f(V)$. Hence, $\|g'\|_1\le \|g\|_1+2\mu+|f(V)|$. Note that
$\mu\le \Lb{f}$; this is because $\mu=-f(W)$ for a certain set $W$,
and therefore any permutation that starts with the elements of $W$
will give an extreme base of $1$-norm at least $|f(W)|$. Similarly, $|f(V)|\le \Lb{f}$.
 The claim
follows.
\end{proof}

\subsection{The sliding von Neumann algorithm}

\renewcommand{\algorithmicrequire}{\textbf{Input:}}
\renewcommand{\algorithmicensure}{\textbf{Output:}}

\begin{figure*}[htb!]
\begin{center}
\begin{minipage}{0.85\textwidth}
\begin{algorithm}[H]
\raggedright
  \begin{algorithmic}[1]
    \Require{A submodular function $f:2^V\to\Z$, a value
      $\mu\ge\max\{0,-f(V)\}$, a set $W\subseteq V$ with $f(W)=-\mu$, a
      matrix $Q\in \bb{S}^{n}_{++}$, and an $\varepsilon>0$.}
    \Ensure{$\,$
    \begin{itemize}
    \item A value $\mu'\ge \mu$ and a set $W'\subseteq V$ with
      $f(W')=-\mu'$,
    \item bases $g_1,\ldots,g_k\in B(f_{\mu'})$,
 $x\in \R^k$, $y\in \R^n$  such that
      $y=\sum_{i=1}^k{x_i g_i/\|g_i\|_Q}$, $\vec e^\T x=1$, $x\ge
      0$, and  $\|y\|_Q\le \varepsilon$.
      \end{itemize}
      }

    \State Set $\mu':=\mu$, $W':=W$.
    \State Pick $g_1$ as an arbitrary vertex of $B(f_\mu)$. Set $x_1:=1$, $y:={g_1}/{\|g_1\|_Q}$.
\State Let $k:=2$.
    \While{$\|y\|_Q>\varepsilon$}
  	\State Let $g_k\gets$ \Call{GreedyMin}{$f_{\mu'},Qy$}.
        \If{$y^\T Q g_k>0$} \Comment{sliding} \label{li:vN-sliding}
        \State \label{li:W} $W':=$\Call{MinSet}{$f_{\mu'},Qy$}; $\delta:=-f_{\mu'}(W')$;  $\mu':=-f(W')$;

        \State Set $v_1$ and $v_n$ to be the first and last elements
        of $V$ in increasing order by the weight vector $Qy$.
        \State \label{li:new g} $g_k(v_1):=g_k(v_1)+\delta$;
        $g_k(v_n):=g_k(v_n)-\delta$.
\EndIf
       \State {\bf end if}
      \State
      \[\lambda:=\frac{\scalar{y-\frac{g_k}{\|g_k\|_Q},y}_Q}{\left\|y-\frac{g_k}{\|g_k\|_Q}\right\|^2_Q}
      ;\] \label{l:lambda}
       \State $y:=(1-\lambda)y+\lambda g_k/\|g_k\|_Q$;
       \Comment{min $Q$-norm point on $[y,  g_k/\|g_k\|_Q]$}
       \State $x_k:=\lambda$;
\For{$i=1,\ldots,k-1$} $x_i:=(1-\lambda)x_i$\EndFor
\State $k:=k+1$
	\EndWhile
\Return{$\mu'$, $W'$, the vectors $g_1,\ldots,g_k$, $x$, and $y$.}
 \end{algorithmic}
\caption{The sliding von Neumann algorithm}\label{alg:Neumann}
\end{algorithm}
\end{minipage}
\end{center}
\end{figure*}

The Full Support Image Algorithm of \cite{part-1} uses
the von Neumann algorithm as the basic subroutine.
The von Neumann algorithm was described in \cite{Dantzig1991} to find a
feasible solution to the system $A^\T y>0$ for a matrix $A\in
\R^{n\times p}$. At every iteration, $y$ is maintained as a convex combination
of columns of $A$ normalized by their norm, that is, $y$ is maintained in the convex hull of $a_1/\|a_1\|,\ldots,a_p/\|a_p\|$.
Initially $y=a_i/\|a_i\|$ for some arbitrary $i\in[n]$; at any iteration, the algorithm terminates if $A^\T y>0$, otherwise
a column $a_k$ such that $a_k^\T y\leq 0$ is selected, and $y$ is updated to be the point of minimum norm in the line segment
$[a_k/\|a_k\|,y]$. The von Neumann algorithm can be seen as a variant of the conditional
gradient algorithm for the problem $\min\{\frac 12 \|y\|^2\st y\in\conv(\{a_1/\|a_1\|,\ldots,a_p/\|a_p\|\})\}$,
differing in the fact that von Neumann algorithm only needs to decide whether the minimum value of the norm is positive.

Our \emph{sliding von Neumann algorithm} (Algorithm~\ref{alg:Neumann}) is a modification of the standard von Neumann algorithm,
adapted to the context of submodular function minimization. The algorithm is applied to
the extreme bases of $B(f)$, in order to decide if there exists a point in the interior of the cone $\Sigma$ defined
in \eqref{eq:sigma}.
The main differences are the following.
\begin{itemize}
  \item The algorithm incorporates the adaptive shifting $f_\mu$ described previously. In particular,  when the current $y$ satisfies $g^\T y>0$ for all $g\in B(f_\mu)$,
  the algorithm does not stop, but it determines $S\subseteq V$ with $f_\mu(S)<0$ as in Lemma~\ref{lem:minset}, it updates $\mu:=-f(S)$, and resumes from the current point $y$.
  \item Rather than maintaining $y$ as a convex combination of the extreme bases of $B(f_\mu)$ normalized by their $2$-norms, the algorithm will use a more general norm, defined by a symmetric positive definite matrix $Q$ given as part of the input. This is because the algorithm will be used as a subroutine of Algorithm~\ref{alg:main-weakly}, where the norm will be periodically
   rescaled.
 \end{itemize}

\begin{lemma}\label{lemma:sliding VN}
Algorithm~\ref{alg:Neumann} terminates in $\lceil 1/\varepsilon^2\rceil$
iterations, returning a correct output.
\end{lemma}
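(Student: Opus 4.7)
The plan is to show that, after the sliding correction, each loop iteration behaves like one step of the standard von~Neumann algorithm in the $Q$-inner product applied to a monotone family of extreme bases, so that the classical progress bound $\|y\|_Q^2 = O(1/k)$ transfers unchanged. With this the iteration count and output invariants both fall out quickly.

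The first and main task is to analyze the sliding step. When $y^\T Q g_k > 0$ for $g_k = \textsc{GreedyMin}(f_\mu,Qy)$, formula~\eqref{eq:vertex-cost} applied to $f_\mu$ (using $f_\mu(V)=0$) gives
\[
y^\T Q g_k = \sum_{i=1}^{\n-1} f_\mu(\{v_1,\ldots,v_i\})\bigl((Qy)(v_i) - (Qy)(v_{i+1})\bigr),
\]
and since every factor $(Qy)(v_i)-(Qy)(v_{i+1})\le 0$, some prefix must have $f_\mu$-value strictly negative. I would conclude that $W = \textsc{MinSet}(f_\mu, Qy) \notin \{\emptyset, V\}$, that $\delta = -f_\mu(W) > 0$, and that $\mu' := \mu + \delta = -f(W)$ is admissible for Lemma~\ref{lem:f-mu}. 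Then I would argue that lines \ref{li:W}--\ref{li:new g} transform $g_k$ into exactly the extreme base of $B(f_{\mu'})$ for the ordering $v_1, \ldots, v_\n$: only the $v_1$ and $v_\n$ entries change, because $f_{\mu'}$ agrees with $f_\mu + \delta$ on every proper non-empty set and with $f_\mu$ on $\emptyset$ and $V$. By the choice of $\delta$, every prefix $f_{\mu'}$-value is nonnegative, so plugging back into the same formula yields $y^\T Q g_k \le 0$ for the corrected vector. Hence $y^\T Q g_k \le 0$ holds at the line-search step, whether or not sliding was triggered.

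With that established, I would invoke the classical von~Neumann progress argument in the $Q$-metric: writing $r := \|y\|_Q^2$ and $t := -y^\T Q g_k/\|g_k\|_Q \ge 0$, the optimal $\lambda$ of line~\ref{l:lambda} gives $\|y_{\text{new}}\|_Q^2 = r - (r+t)^2/(r+2t+1)$, and the elementary inequality $(r+t)^2(r+1) \ge r^2(r+2t+1)$ then yields $1/\|y_{\text{new}}\|_Q^2 \ge 1/r + 1$. Starting from $\|y\|_Q^2 = 1$ after initialization, induction delivers $\|y\|_Q^2 \le 1/k$ after $k$ loop passes, so the algorithm halts within $\lceil 1/\varepsilon^2 \rceil$ iterations. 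For the output invariants, $\lambda \in [0,1]$ preserves the convex-combination representation of $y$ with $x \ge 0$ and $\vec e^\T x = 1$. Each $g_i$ was inserted (after any sliding) as an extreme base of $B(f_{\mu_i})$ for some value $\mu_i \le \mu'$, hence by the monotonicity clause of Lemma~\ref{lem:f-mu} lies in $B(f_{\mu'})$; the invariant $f(W) = -\mu$ is maintained by construction. The main obstacle is clearly concentrated in the sliding step — checking simultaneously that the correction produces a bona fide extreme base of the \emph{new} polytope and that the sign condition $y^\T Q g_k \le 0$ is restored; once this is isolated, the rest is a direct transcription of the classical analysis.
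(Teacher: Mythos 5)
Your proof is correct and follows the same structure as the paper's: you verify the sliding step (showing the corrected $g_k$ is the extreme base of $B(f_{\mu'})$ for the same ordering with $y^\T Q g_k\le 0$), note that all previously used bases remain feasible by the monotonicity $B(f_{\mu_i})\subseteq B(f_{\mu'})$ for $\mu_i\le\mu'$, and obtain the iteration bound from the classical von~Neumann progress rate in the $Q$-metric. You simply fill in details the paper delegates to Lemma~\ref{lem:minset} and to Dantzig's analysis — the explicit sign check after sliding and the $1/\|y\|_Q^2$ increment — which is accurate.
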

\begin{proof}
{\em Correctness.} At every iteration, the algorithm calls \textsc{GreedyMin}$(f_{\mu'},Qy)$ to
obtain an extreme base $g_k\in B(f_\mu)$ minimizing $y^\T Qx$ over
$B(f_\mu)$. If $y^\T Q g_k\leq  0$, then we perform a von Neumann update, that is, we replace
$y$ with the minimum $Q$-norm point on the line segment $\left[y,\frac{g_k}{\|g_k\|_Q}\right]$
(which is given by the choice of $\lambda$ in line~\ref{l:lambda}).

Consider now the case $y^\T Q g_k>0$. In this case, at line~\ref{li:W} the current value of $\mu'$ is updated to a new value, say $\mu''$.
According to Lemma~\ref{lem:minset}, the set $W'$ determined at Line~\ref{li:W} satisfies $f_{\mu'}(W')<0$, hence
$\mu''=-f(W')=\mu'-f_{\mu'}(W')>\mu'$. In particular, observe that $\delta=\mu''-\mu'>0$. By definition, for all $S\subseteq V$,
\begin{equation}\label{eq:updated-fmu}
  f_{\mu''}(S)=\begin{cases} f_{\mu'}(S)+\delta & \mbox{if } S \neq \emptyset, V \\
                      0 & \mbox{if } S=\emptyset \mbox{ or } S= V \end{cases}
\end{equation}

The vector $g_k$ is updated at Line~\ref{li:new g}. Let us denote by $g'_k$ the updated vector.
It follows from \eqref{eq:vertex} and from \eqref{eq:updated-fmu} that $g'_k$ is the extreme base of $B(f_{\mu''})$ defined by the same ordering of the elements $v_1,\ldots,v_n$ of $V$ that defined $g_k$ as an extreme base of $B(f_{\mu'})$.

It follows that $g'_k$ is the solution output by \textsc{GreedyMin}$(f_{\mu''},Qy)$, hence it is an optimal solution for $\min\{y^\T Q z\st z\in B(f_{\mu''})\}$. Since, by the choice of $\mu''$, $f(\{v_1,\ldots,v_i\})\geq -\mu''$ for all $i\in\{1,\ldots,n\}$,  Lemma~\ref{lem:minset} implies that $y^\T Q g'_k\le 0$.

Since $g'_k$ is in $B(f_{\mu''})$ and  $y^\T Q g'_k\le 0$, we can perform a von Neumann update. Lemma~\ref{lem:f-mu} implies that $B(f_{\mu''})\supseteq B(f_{\mu'})$; hence all vectors $g_i$ computed thus far are still contained in $B(f_{\mu'})$. Since the algorithm terminates when $\|y\|_Q\leq \varepsilon$, it follows that the $t$ and the elements $g_1,\ldots,g_k$ returned
by the algorithm satisfy the requirements.

\smallskip

\noindent{\em Running time.} By the standard analysis of von Neumann's algorithm given by Dantzig~\cite{Dantzig1991}, $1/\|y\|_Q^2$ increases by at least $1$ at
every update, hence after $k$ iterations $\|y\|_Q\leq 1/\sqrt{k}$. We remark that, even though Dantzig's analysis applies to the case of the $2$-norm (i.e. $Q=I$), the case of a norm defined by a generic positive definite matrix $Q$ can be easily reduced to the $2$-norm case. Indeed, note that for any vector $z\in\R^n$, $\|z\|_Q=\|Q^{1/2}z\|_2$. In particular, since at every iteration we maintain $y=\sum_{i=1}^k{x_i g_i/\|g_i\|_Q}$, multiplying on both sides by $Q^{1/2}$ this is equivalent to $Q^{1/2}y=\sum_{i=1}^k{x_i (Q^{1/2}g_i)/\|Q^{1/2}g_i\|_2}$. Hence the steps of von Neumann algorithm for the norm defined by $Q$ are the same as the steps of the standard von Neumann algorithm applied to the vectors $(Q^{1/2} g)/\|Q^{1/2}g_i\|_2$, $g\in B(f)$, where $Q^{1/2}y$ is maintained as a convex combination of such vectors. Dantzig's analysis implies that $1/\|Q^{1/2}y\|^2_2=1/\|y\|_Q^2$ increases by at least $1$ at every update.
\end{proof}

\subsection{Geometric rescaling algorithm for SFM}\label{sec:rescaling}

In our geometric rescaling algorithm, Algorithm \textsc{Rescaling-SFM} shown in
Algorithm~\ref{alg:main-weakly}, we set  parameters
\[
\varepsilon\eqdef \frac{1}{20{n}},\qquad T\eqdef 5{n}\log (n \Lbt{f}).
\]

In the algorithm below we also maintain a set $\Gamma$ collecting all the elements of the base polytope generated during the Sliding von Neumann iterations; its main purpose is for clarity during the analysis, as we will need to refer to it.

\begin{figure*}[htb]
\begin{center}
\begin{minipage}{0.85\textwidth}
\begin{algorithm}[H]
\raggedright
  \begin{algorithmic}[1]
    \Require{A submodular function $f:2^V\to\Z$.}
    \Ensure{A set $W$ minimizing $f(W)$.}

   \State Set $Q:=I_{n}$, $R:=I_{n}$. Set $\Gamma:=\emptyset$.

   \State Set $\mu:=\max\{0,-f(V)\}$.
\If {$f(V)<0$} $W:=V$, \textbf{else} $W:=\emptyset$.  \EndIf
 \For{$i=1,\ldots,T$}
    \State Call \Call{Sliding von Neumann}{$f,\mu,W,Q,\varepsilon$} to obtain
    the new values

    of $\mu$ and $W$, and vectors $g_1,\ldots,g_k,x,y$.
    \State {\bf If} $y=0$, then {\bf stop}; {\bf return} $W$
    \State {\bf Else} {\bf rescale}
   \begin{equation}\label{eq:rescale}
R:=\frac{1}{(1+\varepsilon)^2}\left(R+\sum_{i=1}^k \frac{x_i}{\|g_i\|_Q^2}
  g_ig_i^\T \right);\quad  Q:=R^{-1}.
\end{equation}
\State $\Gamma:=\Gamma\cup\{g_1,\ldots,g_k\}$;
 \EndFor
\Return{$W$.}
 \end{algorithmic}
\caption{Rescaling-SFM}\label{alg:main-weakly}
\end{algorithm}
\end{minipage}
\end{center}
\end{figure*}

Algorithm \textsc{Rescaling-SFM} is the adaptation of the Full
Support Image Algorithm to our submodular
setting, using the sliding von Neumann algorithm. We need to modify
the algorithm and its analysis to reflect that the feasible region
keeps changing due to the updates to the value of $\mu$.

The value $\mu$ keeps increasing during the algorithm; it is updated
within the sliding von Neumann subroutine. We also maintain a set $W$ with $f(W)=-\mu$.
The algorithm stops after $T$ rescalings. At this point, we
conclude from a volumetric argument that the current $W$ is the
minimizer of $f$. We show the following running time bound.

\begin{theorem}\label{thm:main-weakly}
Algorithm \textsc{Rescaling-SFM} finds an optimal solution to \eqref{prob:sfm} in time
$O((n^4 \cdot \EO + n^5)\log (n\Lbt{f}))$.
\end{theorem}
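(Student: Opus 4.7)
The plan is to split the argument into (i) a correctness claim that after $T = 5\n \log(\n \Lbt{f})$ outer iterations the maintained pair $(\mu, W)$ certifies the optimum and (ii) a per-iteration cost estimate of $O(\n^3\cdot\EO + \n^4)$, which together yield the stated bound.

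Throughout the execution I would maintain the invariant that $f(W)=-\mu$, that $\mu$ is monotonically nondecreasing, and that every extreme base $g_i$ accumulated so far lies in $B(f_\mu)$ for the \emph{current} $\mu$. Lemmas~\ref{lem:f-mu} and \ref{lem:minset} are exactly what is needed: the sliding step in \textsc{Sliding von Neumann} is triggered only when $y^\T Q g_k > 0$, at which point Lemma~\ref{lem:minset} produces a set $W$ with $f(W) < -\mu$ so $\mu$ strictly increases, while Lemma~\ref{lem:f-mu} guarantees $B(f_\mu) \subseteq B(f_{\mu'})$ for $\mu \le \mu'$, so historical bases remain feasible against the new cone. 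Consequently, at termination of every subroutine call the vectors $g_1,\ldots,g_k$ lie in the current $B(f_\mu)$ and $y$ is a convex combination of their $Q$-normalizations with $\|y\|_Q \le \varepsilon$.

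For correctness, suppose, for contradiction, that after $T$ rescalings $W$ is not optimal. Then $0\notin B(f_\mu)$, and since $f$ is integer-valued there exists $S$ with $f_\mu(S)\le -1$; the inequality $-\chi_S^\T y \ge 1$ for all $y\in B(f_\mu)$ then exhibits the cone $\Sigma=\{w \st w^\T y \ge 0\ \forall y\in B(f_\mu)\}$ as having nonempty interior with width $\omega = \Omega(1/(\sqrt{\n}\,\Lbt{f_\mu}))$. Lemma~\ref{lem:vector-length} together with $\Lbt{f_\mu}\le\Lb{f_\mu}\le 4\Lb{f}\le 4\sqrt{\n}\,\Lbt{f}$ gives $\log \omega^{-1}=O(\log(\n\Lbt{f}))$. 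The rescaling analysis of \cite{part-1} shows that each rescaling multiplies $\det(R)$ by a fixed factor, so after $T=O(\n\log \omega^{-1})$ rescalings $\det(R)$ exceeds the upper bound compatible with a cone of width $\omega$, a contradiction. The subtle point is that the reference cone shrinks while the algorithm runs, but the containment $B(f_\mu)\subseteq B(f_{\mu'})$ means every $g_i$ ever generated is a valid separating direction against the \emph{final} cone, so the volumetric accounting of \cite{part-1} can be applied with the terminal $\mu$. Hence $0\in B(f_\mu)$ at termination, and by the remark following Theorem~\ref{thm:edmonds} the returned $W$ attains $f(W)=-\mu=\min_{S\subseteq V} f(S)$.

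For the running time, Lemma~\ref{lemma:sliding VN} bounds the number of inner iterations of a single call to \textsc{Sliding von Neumann} by $\lceil 1/\varepsilon^2\rceil = O(\n^2)$. Each inner iteration performs one call to \textsc{GreedyMin} (and occasionally \textsc{MinSet}) on $f_\mu$ against the cost vector $Qy$, costing $O(\n\cdot\EO + \n\log \n)$; the matrix-vector product $Qy$, the $Q$-inner products, and the coefficient update of the convex combination each cost $O(\n^2)$. One subroutine call therefore runs in $O(\n^3\cdot\EO + \n^4)$ time. Each rescaling (\ref{eq:rescale}) assembles $\sum_i (x_i/\|g_i\|_Q^2) g_i g_i^\T$ as a product of an $\n\times k$ matrix with a diagonal and its transpose in $O(k\n^2)=O(\n^4)$ operations and inverts an $\n\times\n$ matrix in $O(\n^3)$, for another $O(\n^4)$ per outer iteration. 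Multiplying the per-iteration cost $O(\n^3\cdot\EO+\n^4)$ by $T=O(\n\log(\n\Lbt{f}))$ outer iterations delivers the advertised bound $O((\n^4\cdot\EO + \n^5)\log(\n\Lbt{f}))$. The main technical obstacle is the dynamic cone in paragraph three; once this is handled, correctness and the running time analysis follow the template of \cite{part-1}.
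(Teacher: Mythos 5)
Your proposal is correct and matches the paper's proof structure: the same split into a termination--correctness argument and a per-iteration cost estimate, the same use of Lemmas~\ref{lem:f-mu} and~\ref{lem:minset} to handle the sliding, and the same reliance on the volumetric accounting of~\cite{part-1} for the rescaling bound. You correctly identify and resolve the only subtlety (the shrinking cone), which is exactly what the paper encodes in Lemma~\ref{lem:F-in}, via the observation that $B(f_\mu)\subseteq B(f_{\mu'})$ keeps all historical cuts valid against the terminal cone. The one small difference is cosmetic: for the termination bound you lower-bound the geometric width of $\Sigma_\mu$ directly via the unit vector $-\chi_S/\|\chi_S\|_2$ and $\Lbt{f_\mu}\le 4\sqrt{n}\Lbt{f}$, whereas the paper instead invokes its condition-number $\omega_\mu$ (Lemma~\ref{lem:omega-bound}, via the min-norm point and Theorem~\ref{thm:fuji}) and then threads through Lemmas~\ref{lem:dual-vol-dec}, \ref{lem:a-k-Q-bound}, and~\ref{lem:lower-width}; both routes give the same $O(\log(n\Lbt{f}))$ bound on $\log\omega^{-1}$.
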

Note that, the definition of $T$ requires knowing the value of
$\Lbt{f}$; we can replace it by the bound $\|\alpha\|_2$  as in
Section~\ref{sec:prelim}. As noted there, this changes the
overall running time bound only by a constant factor.

\begin{remark}\label{rmk:difference in R} The rescaling formula \eqref{eq:rescale} uses the
denominator $(1+\varepsilon)^2$, whereas in \cite{part-1} the denominator $1+\varepsilon$ is used instead.
This slightly different definition is needed in the proof of Lemma~\ref{lemma:pullback} in
Section~\ref{sec:farkas}. Nevertheless, the analysis in \cite{part-1}
goes through essentially unchanged by choosing, as we did here,  $\varepsilon=1/(20 n)$ instead of the choice $\varepsilon=1/(11 n)$ made in \cite{part-1}.
\end{remark}
In the analysis of Algorithm~\ref{alg:main-weakly}, we will refer to several lemmas from~\cite{part-1}. While the proofs are essentially given in~\cite{part-1}, they differ slightly from the current context due to the different choice of rescaling, mentioned in Remark~\ref{rmk:difference in R}, and the different notation. For completeness we will provide proofs in the appendix.

\subsection{Analysis}\label{sec:weakly-analysis}
Let us define the ellipsoid
\[
E(R)\eqdef \{x\in \R^n: x^\T R x\le 1\}.
\]
Further, let
\begin{equation}\label{eq:sigma-def}
\Sigma_\mu\eqdef \{w\in \R^{n}\colon w^\T x\ge 0 \quad \forall x\in
B(f_\mu)\},\qquad F_\mu\eqdef \Sigma_\mu\cap \B^{n},
\end{equation}
where $\B^n$ denotes the unit ball centered at the origin in $\R^n$.

$\Sigma_\mu$ is the set of normal vectors of hyperplanes that weakly separate 0 from
$B(f_\mu)$. A vector in the interior of $\Sigma_\mu$ gives a strong
separation, and verifies that $0\notin B(f_\mu)$. This in turn implies
that $f_\mu(S)<0$ for some set $S\subseteq V$, and thus the minimum
value of $f$ is strictly less than the current estimate $-\mu$.

 The main ideas of the analysis are showing that \emph{(a)} the
ellipsoid $E(R)$ contains the set $F_\mu$ at every iteration (Lemma
\ref{lem:F-in}), and that \emph{(b)} the volume of $E(R)$ keeps decreasing by a
constant factor at every rescaling (Lemma~\ref{lem:dual-vol-dec}). For
an integer valued $f$, one can lower bound the volume in terms of $n$
and $L_{f,2}$, assuming that $F_\mu$ has a nonempty interior. Hence,
at termination one can conclude that the interior of $F_\mu$ is empty,
which implies that $f_\mu\ge 0$, or equivalently, the minimum value of
the function is $-\mu$ for the current $\mu$.

The analysis below provides a slightly different argument than the
volume analysis, by bounding the $Q$-norm of the bases used during the
algorithm. This will be needed for the ``pull-back'' argument for
finding a dual certificate of optimality in Section~\ref{sec:farkas}.

Clearly, \textsc{GreedyMin}$(f_\mu,w)$ can be used as a
separation oracle for $\Sigma_\mu$. Further,
 if $\mu'\ge \mu$, then by Lemma~\ref{lem:f-mu} $B(f_{\mu'})\supseteq B(f_{\mu})$, which
 implies that   $\Sigma_{\mu'}\subseteq
\Sigma_{\mu}$ and $F_{\mu'}\subseteq F_\mu$ by definition.

As in \cite{part-1}, for a convex set $X\subset \R^{n}$ and a vector
$a\in \R^{n}$, we define the width
\[
\mathrm{width}_{X}(a)\eqdef\max\{a^\T z\colon z\in X\}.
\]
 Further, we define the condition number
\[
\omega_\mu \eqdef\min_{x\in B(f_\mu)\sm\{0\}}
\frac{\mathrm{width}_{F_\mu}(x)}{\|x\|_2}.
\]
A key estimate for the running time analysis is the following.
\begin{lemma}\label{lem:omega-bound}
Let $f$ be an integer valued submodular function, and let $\mu\in\Z$ satisfy
$\min_{S\subseteq V} f(S)< -\mu\le \min\{0,f(V)\}$.
Then
\[
\omega_\mu\ge \frac1{4 n\Lbt{f}}.
\]
\end{lemma}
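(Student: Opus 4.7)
The strategy is to exhibit a single explicit vector $z\in F_\mu$ that witnesses a uniformly large width for every $x\in B(f_\mu)\smz$. The natural choice is the (normalized) negative indicator of a strict minimizer of $f_\mu$: since $\Sigma_\mu$ is by definition the cone of vectors that are nonnegative on $B(f_\mu)$, one wants a set $S^*$ such that $y(S^*)$ is uniformly bounded away from $0$ from above, for every $y\in B(f_\mu)$; this is precisely what a minimizing set with strictly negative $f_\mu$-value provides.

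The first step is to extract such a minimizer. Using the hypothesis $\min_S f(S)<-\mu\le \min\{0,f(V)\}$ together with $f_\mu(\emptyset)=f_\mu(V)=0$, the minimum of $f_\mu$ is attained on some $S^*\notin\{\emptyset,V\}$ with $f_\mu(S^*)=f(S^*)+\mu<0$. Because $f$ (hence $f_\mu$) is integer valued, this gives $f_\mu(S^*)\le -1$. Define
\[
z\eqdef -\frac{\chi_{S^*}}{\sqrt{|S^*|}},\qquad \|z\|_2=1.
\]
For every $y\in B(f_\mu)$, the base-polytope inequality $y(S^*)\le f_\mu(S^*)$ yields
\[
z^\T y=-\frac{y(S^*)}{\sqrt{|S^*|}}\ge -\frac{f_\mu(S^*)}{\sqrt{|S^*|}}\ge \frac{1}{\sqrt{|S^*|}}\ge \frac{1}{\sqrt{n}},
\]
so $z\in \Sigma_\mu\cap\B^{\n}=F_\mu$. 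In particular, for any $x\in B(f_\mu)$,
\[
\mathrm{width}_{F_\mu}(x)\ \ge\ x^\T z\ \ge\ \frac{1}{\sqrt{n}}.
\]

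To finish, bound the denominator. For $x\in B(f_\mu)$ we have $\|x\|_2\le\|x\|_1\le \Lb{f_\mu}$. By Lemma~\ref{lem:vector-length} (applicable because $\mu=-f(W)$ for some $W$ and $\mu\ge\max\{0,-f(V)\}$), $\Lb{f_\mu}\le 4\Lb{f}$, and by the standard norm comparison recalled in Section~\ref{sec:prelim}, $\Lb{f}\le \sqrt{n}\,\Lbt{f}$. Combining,
\[
\frac{\mathrm{width}_{F_\mu}(x)}{\|x\|_2}\ \ge\ \frac{1/\sqrt{n}}{4\sqrt{n}\,\Lbt{f}}\ =\ \frac{1}{4\n\,\Lbt{f}},
\]
and taking the minimum over $x\in B(f_\mu)\smz$ gives the desired bound on $\omega_\mu$.

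\paragraph{Main obstacle.} The argument is mostly assembly of existing tools; the only real choice is the witness $z$, which is natural once one recognizes that $\Sigma_\mu$ is the (negative) polar of $B(f_\mu)$ and that any strictly negative value of $f_\mu$ produces an indicator vector deep inside $\Sigma_\mu$. A minor subtlety is ensuring $S^*\notin\{\emptyset,V\}$ so that $\chi_{S^*}\neq 0$ and the bound $y(S^*)\le f_\mu(S^*)\le -1$ is nontrivial — this is precisely what the two sides of the hypothesis on $\mu$ guarantee, via the integrality of $f$ and the fact that $f_\mu$ vanishes on $\emptyset$ and $V$.
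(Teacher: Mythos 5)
Your proof is correct, and you take a genuinely different route to the key width bound $\mathrm{width}_{F_\mu}(x)\ge 1/\sqrt{n}$. The paper's proof uses the normalized minimum-norm point $\hat z$ of $B(f_\mu)$ as the witness in $F_\mu$, relying on the first-order optimality condition $\hat z^\T x\ge\|z\|_2$ and on Fujishige's theorem (Theorem~\ref{thm:fuji}) to translate $|f_\mu(S^*)|\ge 1$ into $\|z\|_2\ge 1/\sqrt{n}$. You instead take the normalized negative indicator $-\chi_{S^*}/\sqrt{|S^*|}$ of a strict minimizer of $f_\mu$, and the bound drops out of the base-polytope inequality $y(S^*)\le f_\mu(S^*)\le -1$ directly. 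Your approach is more elementary: it needs only the polyhedral description of $B(f_\mu)$, not Fujishige's minimum-norm characterization, and the $1/\sqrt n$ loss has a transparent origin (normalizing the indicator of a set of size at most $n$). Both proofs then bound $\|x\|_2$ identically via Lemma~\ref{lem:vector-length}. One small remark: Lemma~\ref{lem:vector-length} as stated assumes $\mu=-f(W)$ for some $W$, which the hypothesis of the present lemma does not literally give; however, the hypothesis $\min_S f(S)<-\mu$ already yields $\mu<\Lb{f}$ by the same extreme-base argument, so the needed bound $\Lb{f_\mu}\le 4\Lb{f}$ still holds — the paper's proof glosses over the same point, so this is not a defect specific to yours.
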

\begin{proof}
Note that, by Claim~\ref{claim:complexity parameters} and Lemma~\ref{lem:vector-length},  $\Lbt{f_\mu}\le\Lb{f_\mu}\le 4\Lb{f}\le 4\sqrt{n}\Lbt{f}$, hence
$\|x\|_2\le 4\sqrt{n}\Lbt{f}$ for
every $x\in B(f_\mu)$. The claim follows by showing
\begin{equation}\label{eq:width-bound}
\mathrm{width}_{F_\mu}(x)\ge 1/\sqrt{n}.
\end{equation}
To prove this,  we note that the assumption of the
lemma implies $0\notin B(f_\mu)$. Let $z$ denote
the minimum norm point in $B(f_\mu)$, and let $\hat z=z/\|z\|_2$. Since $z$ is the minimum norm point in $B(f_\mu)$, we have $z^\T x\geq \|z\|^2_2$ for all $x\in B(f_\mu)$. Thus for all $x\in B(f_\mu)$
\[
\hat z^\T x\ge \|z\|_2,
\]
and, in particular, this implies $\hat z\in F_\mu$.
By Theorem~\ref{thm:fuji}, if $S$ is the minimizer of $f_\mu$, $f_\mu(S)\leq -1$, because $f$ is integer valued and
$\mu$ is a nonnegative integer such that $f(S)<-\mu$. It follows that
$1\le |f_\mu(S)|\le \sqrt{n}\|z\|_2$, thus $\hat z^\T x\ge
1/{\sqrt{n}}$. Since $\hat z\in F_{\mu}$, this provides the bound on
$\mbox{width}_{F_\mu}(x)$ for every $x\in B(f_\mu)$.
\end{proof}

\begin{lemma}\label{lem:R representation} At every stage of Algorithm~\textsc{Rescaling-SFM}, $\Gamma\subseteq B(f_\mu)$. Furthermore, after $t$ rescalings, the current matrix $R$ is of the form
\begin{equation}\label{eq:R-form}
R=\alpha I_n+\sum_{g\in\Gamma} \gamma_g\frac{g g^\T}{\|g\|_2},
\end{equation}
where $\alpha=\frac{1}{(1+\varepsilon)^{2t}}$ and $\gamma_g>0$ for all $g\in\Gamma$.
\end{lemma}

\begin{proof}
To see that $\Gamma\subseteq B(f_\mu)$ throughout the algorithm, it suffices to observe that when a new element $g$ is included in $\Gamma$, it is an extreme base for $B(f_\mu)$ for the current value of $\mu$, so in particular it is an element of $B(f_\mu)$, and that throughout the algorithm $B(f_\mu)$ can only become inclusionwise larger, as the value of $\mu$ never decreases. The second part of the statement follows immediately from the recursive construction of $R$ as defined in \eqref{eq:rescale}.
\end{proof}

Lemmas~\ref{lem:F-in}, \ref{lem:lower-width}, and \ref{lem:a-k-Q-bound} below follow essentially from \cite{part-1}, thus we will often refer to \cite{part-1} in the proofs. To enable the reader to check the arguments in  \cite{part-1}, we first explain how the notation differs between the two papers. Specifically, in

\begin{lemma}[{\cite[Lemma 10]{part-1}}]\label{lem:F-in}
Throughout Algorithm~\textsc{Rescaling-SFM}, $F_\mu\subseteq E(R)$ holds.
\end{lemma}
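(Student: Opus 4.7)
The plan is induction on the number of rescaling iterations of Algorithm~\ref{alg:main-weakly}. In the base case $R = I_\n$, so $E(R) = \B^\n \supseteq F_\mu$ by definition of $F_\mu$. For the inductive step, assume $F_\mu \subseteq E(R)$ at the start of an iteration, and let $\mu' \ge \mu$ be the value of $\mu$ after the call to the Sliding von Neumann subroutine. Lemma~\ref{lem:f-mu} gives $B(f_\mu) \subseteq B(f_{\mu'})$, which dualizes to $\Sigma_{\mu'} \subseteq \Sigma_\mu$ and hence $F_{\mu'} \subseteq F_\mu \subseteq E(R)$. Thus the induction hypothesis transfers to the new $\mu'$: every $w \in F_{\mu'}$ already satisfies $\|w\|_R \le 1$ before the rescale. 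Moreover, by the output specification of the subroutine, all $g_i$ lie in $B(f_{\mu'})$, so $w^\T g_i \ge 0$ for every $w \in F_{\mu'}$ and every $i$.

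The heart of the argument is a short calculation using the rescaling formula \eqref{eq:rescale}. For $w \in F_{\mu'}$,
\[
(1+\varepsilon)^2\, w^\T R' w \;=\; \|w\|_R^2 \;+\; \sum_{i=1}^k \frac{x_i\,(w^\T g_i)^2}{\|g_i\|_Q^2}.
\]
Cauchy--Schwarz applied to the bilinear form $R$ (using $Q = R^{-1}$) gives $w^\T g_i \le \|w\|_R \|g_i\|_Q$; combined with the non-negativity $w^\T g_i \ge 0$, this is the pointwise bound $(w^\T g_i)^2/\|g_i\|_Q \le \|w\|_R (w^\T g_i)$. Summing against the weights $x_i/\|g_i\|_Q$ and using $y = \sum_i x_i g_i/\|g_i\|_Q$,
\[
\sum_{i=1}^k \frac{x_i\,(w^\T g_i)^2}{\|g_i\|_Q^2} \;\le\; \|w\|_R \cdot w^\T y \;\le\; \|w\|_R^2\, \|y\|_Q \;\le\; \|w\|_R^2\, \varepsilon,
\]
where the last step uses the termination condition $\|y\|_Q \le \varepsilon$ of the subroutine. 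Combining the two estimates, $(1+\varepsilon)^2\, w^\T R' w \le \|w\|_R^2 (1+\varepsilon) \le 1+\varepsilon$, so $w^\T R' w \le 1/(1+\varepsilon) \le 1$, i.e., $w \in E(R')$, closing the induction.

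The only real subtlety --- more of a bookkeeping point than an obstacle --- is justifying $w^\T g_i \ge 0$, which is the crucial ingredient that lets me keep one linear factor of $w^\T g_i$ rather than squaring twice and losing the $\varepsilon$ gain. For bases generated before any sliding step this is automatic, since such $g_i \in B(f_\mu) \subseteq B(f_{\mu'})$. A sliding step, however, overwrites the latest base on line~\ref{li:new g}, and one must verify that the modified vector is the extreme base of $B(f_{\mu'})$ associated with the ordering induced by $Qy$, which is a direct application of the greedy characterization \eqref{eq:vertex}; this is exactly what is argued in the correctness proof of Lemma~\ref{lemma:sliding VN}. A secondary point worth noting is that the factor $(1+\varepsilon)^2$ (rather than $1+\varepsilon$ as in \cite{part-1}) in \eqref{eq:rescale} gives slack in the final inequality $\|w\|_R^2(1+\varepsilon) \le (1+\varepsilon)^2$, which is what the authors will later exploit in the pull-back analysis of Section~\ref{sec:farkas}.
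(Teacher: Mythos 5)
Your proof is correct, and it fills in the part the paper merely cites from \cite[Lemma 3.6]{part-1}: the paper's own proof of this lemma consists only of the observation that $F_\mu$ shrinks when $\mu$ increases during sliding, deferring the invariance of $E(R) \supseteq F_\mu$ under rescaling to the external reference. Your Cauchy--Schwarz argument (using $w^\T g_i \ge 0$ to keep a linear factor $w^\T g_i$ and thus one factor of $\|y\|_Q \le \varepsilon$) is the standard argument for such rescaling invariants and is surely what the cited lemma does, so you have taken the same route at a lower level of abstraction. Two small remarks: the containment you cite from Lemma~\ref{lem:f-mu}, $B(f_\mu)\subseteq B(f_{\mu'})$ for $\mu\le\mu'$, is stated in the opposite direction in the body of the proof of Lemma~\ref{lemma:sliding VN} (a typo there — your direction is the correct one and the one the lemma statement gives); and your remark about the $(1+\varepsilon)^2$ normalization leaving slack for Section~\ref{sec:farkas} is exactly the point the paper makes just after Theorem~\ref{thm:main-weakly}.
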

\begin{proof}
At initialization we have $F_\mu\subseteq \B^n=E(R)$, since initially $R=I_n$. During the algorithm the value of $\mu$ can increase (in the sliding von Neumann algorithm) or the matrix $R$ can be updated (rescaling). In the former case, as noted above, $F_\mu$
becomes smaller (with respect to set inclusion) as $\mu$ increases in value,  hence the property is maintained in these steps.

The proof that $F_\mu\subseteq E(R)$ whenever $R$ is updated is essentially identical to the proof of Lemma 10 in \cite{part-1}. For simplicity we provide a proof in Lemma~\ref{lemma:E(R) contained at rescaling} in Appendix~\ref{appendix}.
\end{proof}

\begin{lemma}~\label{lem:lower-width}
Throughout Algorithm~\textsc{Rescaling-SFM},
$\|x\|_Q\ge \omega_\mu\|x\|_2$ must hold for every $x\in B(f)\sm \{0\}$.
\end{lemma}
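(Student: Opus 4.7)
The plan is to derive the inequality as a direct combination of Lemma~\ref{lem:F-in} with the polar duality between the norms $\|\cdot\|_Q$ and $\|\cdot\|_R$, followed by a straight application of the definition of $\omega_\mu$. (We interpret the statement with $x\in B(f_\mu)\setminus\{0\}$, matching the domain on which $\omega_\mu$ is defined.)

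First I would recall the standard fact that if $Q\in\bb{S}^{\n}_{++}$ and $R=Q^{-1}$, then the norm $\|\cdot\|_Q$ is the dual of $\|\cdot\|_R$; equivalently, the support function of the ellipsoid $E(R)=\{u:u^\T R u\le 1\}$ equals $\|\cdot\|_Q$. Concretely,
\[
\|x\|_Q \;=\; \max_{u\in E(R)} u^\T x \qquad \forall\, x\in\R^{\n}.
\]
This is the one non-bookkeeping ingredient, and is the step where the update $Q:=R^{-1}$ in the rescaling \eqref{eq:rescale} enters the analysis.

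Next, I would invoke Lemma~\ref{lem:F-in}, which guarantees $F_\mu\subseteq E(R)$ throughout the algorithm. Since the support function is monotone with respect to set inclusion, this immediately yields
\[
\|x\|_Q \;\ge\; \max_{u\in F_\mu} u^\T x \;=\; \mathrm{width}_{F_\mu}(x) \qquad \forall\, x\in\R^{\n}.
\]

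Finally, for $x\in B(f_\mu)\setminus\{0\}$, the definition
\[
\omega_\mu \;=\; \min_{x'\in B(f_\mu)\setminus\{0\}} \frac{\mathrm{width}_{F_\mu}(x')}{\|x'\|_2}
\]
gives $\mathrm{width}_{F_\mu}(x)\ge \omega_\mu\|x\|_2$. Chaining the two displayed inequalities yields the claim $\|x\|_Q\ge \omega_\mu\|x\|_2$.

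There is no real obstacle: the argument is essentially a reformulation of polar duality plus the containment $F_\mu\subseteq E(R)$. The only subtle point is that $F_\mu$ changes during the algorithm (both at rescalings and whenever $\mu$ is updated inside the sliding von Neumann subroutine), so one must be careful that Lemma~\ref{lem:F-in} is indeed maintained across all such updates; but this has already been established there, using the monotonicity $F_{\mu'}\subseteq F_\mu$ for $\mu'\ge \mu$. Hence the inequality holds at every step of \textsc{Rescaling-SFM}.
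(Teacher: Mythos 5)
Your proof is correct and follows exactly the paper's argument: invoke Lemma~\ref{lem:F-in} to get $F_\mu\subseteq E(R)$, use the identity $\mathrm{width}_{E(R)}(x)=\|x\|_Q$ (monotonicity of the support function then gives $\mathrm{width}_{F_\mu}(x)\le\|x\|_Q$), and conclude via the definition of $\omega_\mu$. Your reading of the domain as $B(f_\mu)\setminus\{0\}$ is the right interpretation, matching how $\omega_\mu$ is defined and how the lemma is subsequently applied.
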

\begin{proof} Since $F_\mu\subseteq E(R)$ by Lemma~\ref{lem:F-in}, for every
$x\in B(f)\sm\{0\}$ we have $\mathrm{width}_{F_\mu}(x)\leq \mathrm{width}_{E(R)}(x)$. Furthermore
$\mathrm{width}_{E(R)}(x)=\|x\|_Q$ (this is an easy fact, see \cite[Lemma 2.15]{part-1}). The statement
now follows from the definition of $\omega_\mu$.
\end{proof}

\begin{lemma}[{\cite[Lemma 11]{part-1}}]\label{lem:dual-vol-dec}
The determinant of $R$ increases at least by a factor $16/9$ at
every rescaling.
\end{lemma}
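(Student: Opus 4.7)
The plan is to compute the determinant ratio $\det(R_{\text{new}})/\det(R)$ directly from the rescaling formula \eqref{eq:rescale} and reduce the bound to a one-line linear-algebra estimate, following the template of the proof in \cite{part-1}. Writing $M \eqdef \sum_{i=1}^k (x_i/\|g_i\|_Q^2)\, g_i g_i^\T$, the rescaling reads $R_{\text{new}} = (1+\varepsilon)^{-2}(R + M)$, so by the matrix-determinant identity $\det(R+M) = \det(R)\det(I + R^{-1}M)$ together with $R^{-1}=Q$,
\[
\frac{\det(R_{\text{new}})}{\det(R)} \;=\; \frac{\det(I + QM)}{(1+\varepsilon)^{2\n}}.
\]
It therefore suffices to prove $\det(I+QM) \geq 2$ and $(1+\varepsilon)^{2\n} \leq 9/8$.

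For the first inequality I would conjugate by $Q^{1/2}$: the matrix $QM$ is similar to the symmetric matrix $N \eqdef Q^{1/2} M Q^{1/2}$ (via $Q^{-1/2}(QM)Q^{1/2} = N$), so they share eigenvalues and $\det(I+QM) = \det(I+N)$. Setting $h_i \eqdef Q^{1/2} g_i/\|g_i\|_Q$, one has $\|h_i\|_2 = 1$ and
\[
N \;=\; \sum_{i=1}^k x_i\, h_i h_i^\T,
\]
which is positive semidefinite with $\tr(N) = \sum_i x_i\|h_i\|_2^2 = \sum_i x_i = 1$, using the convex-combination constraint $\vec e^\T x = 1$ guaranteed by Algorithm~\ref{alg:Neumann}. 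Then, writing $\lambda_1,\dots,\lambda_\n \geq 0$ for the eigenvalues of $N$ and expanding,
\[
\det(I+N) \;=\; \prod_{i=1}^\n (1+\lambda_i) \;\geq\; 1 + \sum_{i=1}^\n \lambda_i \;=\; 1 + \tr(N) \;=\; 2,
\]
since all the higher-degree symmetric functions of the $\lambda_i$ are nonnegative.

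For the second inequality, with $\varepsilon = 1/(20\n)$ we have $(1+\varepsilon)^{2\n} \leq e^{2\n\varepsilon} = e^{1/10} < 9/8$, so the ratio is at least $2 \cdot 8/9 = 16/9$.

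There is no real obstacle: the trace identity $\tr(N) = 1$ is precisely what the von Neumann subroutine is designed to provide (unit-$Q$-norm bases with convex weights summing to $1$), and the determinant lower bound $\det(I+N) \geq 1 + \tr(N)$ on PSD matrices is elementary. The only subtle point, flagged in the paragraph just above the lemma, is that our rescaling uses the denominator $(1+\varepsilon)^2$ rather than the $1+\varepsilon$ of \cite{part-1}, so $\varepsilon$ must be taken a factor of $2$ smaller---which is why we set $\varepsilon = 1/(20\n)$---in order for the constant $16/9$ to come out unchanged.
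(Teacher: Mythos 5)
Your proof is correct and follows the standard argument behind Lemma~3.7 of \cite{part-1} (which this paper cites without reproducing the proof): conjugating by $Q^{1/2}$ reduces the claim to $\det(I+N)\geq 1+\tr(N)=2$ for a PSD matrix $N$ of unit trace, the trace identity being exactly the convex-combination normalization $\vec e^\T x=1$ with the $Q$-unit vectors $h_i$, after which the $(1+\varepsilon)^{2\n}$ denominator is absorbed via $(1+\varepsilon)^{2\n}\leq e^{1/10}<9/8$. You also correctly accounted for the paper's modified $(1+\varepsilon)^2$ normalization and the correspondingly smaller $\varepsilon=1/(20\n)$, which is precisely the adaptation flagged in the remark following Theorem~\ref{thm:main-weakly}.
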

\begin{proof}
Proof in Appendix~\ref{appendix}.
\end{proof}

\begin{lemma}[{\cite[Lemma 4.11]{part-1}}]\label{lem:a-k-Q-bound}
At any stage of Algorithm~\textsc{Rescaling-SFM}, there exists a point $\bar g\in \Gamma$ such that
\[\|\bar g\|_Q{\sqrt{\det(R)^{1/{n}}-1}}\le {\|\bar g\|_2}.\]
\end{lemma}
\begin{proof}
Proof in Appendix~\ref{appendix}.
\end{proof}

\begin{proof}[Proof of Theorem~\ref{thm:main-weakly}]
One possibility is that the algorithm terminates at step 6 because $y=0$. In this case, observe that $y=0$ is a conic combination of elements $g_1,\ldots,g_k\in B(f_\mu)$, so it is also a convex combination of those elements, thus showing that $0\in B(f_\mu)$. It follows that $\min_S f_\mu(S)=0$, hence $\min_S f(S)=-\mu=f(W)$, showing that  $W$ is a minimizer.

Otherwise, the algorithm performs $T=5n \log (n\Lbt{f})$ rescalings.
Lemma~\ref{lem:dual-vol-dec} shows that after $T$ rescalings,
$\det(R)\ge (16/9)^T$. It follows from Lemma~\ref{lem:a-k-Q-bound} that,
after $T$ rescalings, there exists a point $g_h\in B(f_\mu)$ with
$\|g_h\|_Q<\|g_k\|_2/(4n\Lbt{f})$. By Lemma~\ref{lem:lower-width} we
conclude that $\omega_\mu\leq \|g_k\|_Q/\|g_k\|_2< 1/(4n\Lbt{f})$. Noting that $\mu$ is maintained
to be an integer throughout the execution of the algorithm,
Lemma~\ref{lem:omega-bound} implies that $\min_{S\subseteq V} f(S)=-\mu$. Since the algorithm
maintains a set $W$ with $f(W)=-\mu$, we can conclude that $W$ is a minimizer for \eqref{prob:sfm}.
This shows that the algorithm terminates with a correct solution.

The algorithm calls the sliding von Neumann subroutine $T=O(n\log (n\Lbt{f}))$ times; by Lemma~\ref{lemma:sliding VN} each call
takes at most $\lceil 1/\varepsilon^2\rceil=O(n^2)$ iterations.
At the $k$th iteration of von Neumann, it takes time
$O(n\cdot\EO+n\log n)$ to run \textsc{GreedyMin} and time $O(k)$ to
update the coefficients $x_1,\ldots,x_k$. These give a bound of
$O(n^3\cdot\EO+n^4)$ for each sliding von Neumann subroutine.

Further, every rescaling has to compute $O(n^2)$ outer
products $g_ig_i^\T$, add their weighted sum to $R$, and compute
$Q=R^{-1}$. The computation is dominated by computing the outer
products, which take altogether $O(n^4)$ time. Hence the iterations
between two subsequent rescalings take time $O(n^3\cdot \EO+n^4)$, yielding
the claimed complexity bound.
\end{proof}

%%% Local Variables:
%%% mode: latex
%%% TeX-master: "Rescaled-LP-sfm"
%%% End:

\subsection{Finding approximate solutions via the geometric rescaling algorithm}\label{sec:farkas}

The main purpose of this section is to show how \textsc{Rescaling-SFM} can be used to derive approximate solutions to \eqref{prob:sfm} in time that depends logarithmically on the desired precision. This will also lead to a strongly polynomial-time based on \textsc{Rescaling-SFM}, as explained in Section~\ref{sec:strongly}. Furthermore, as a bi-product, we will  also be able to generate dual certificates of optimality for \eqref{prob:sfm} from the output of \textsc{Rescaling-SFM}.

\begin{theorem}\label{thm:oracle-running time}
Setting $T=O( n\log( n\delta^{-1}))$ in
Algorithm~\ref{alg:main-weakly}, from its output one can compute  a
set $W\subseteq V$ and a point $y\in B(f)$, expressed as a convex
combination of $O( n)$ extreme bases of $B(f)$,
such that $f(W)\le y^-(V)+\delta\Lb{f}$. In particular, $f(W)\leq \min_{S\subseteq V} f(S) +\delta\Lb{f}$. The overall running time is $O(( n^4\cdot\EO+ n^5)\log( n\delta^{-1}))$.
\end{theorem}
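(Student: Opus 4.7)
The plan is to run Algorithm~\ref{alg:main-weakly} with the reduced threshold $T = \Theta(\n\log(\n\delta^{-1}))$, instead of the $T = 5\n\log(\n\Lbt{f})$ used in the proof of Theorem~\ref{thm:main-weakly}, and then convert its output into the desired pair $(W,y)$. Two situations can arise: either the sliding von Neumann subroutine returns $y=0$ in some iteration (in which case the current $W$ is already an exact minimizer of $f$), or we complete all $T$ rescalings without exact termination and must extract an approximate primal-dual pair.

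In the early-termination case, the argument is identical to that in the proof of Theorem~\ref{thm:main-weakly}: $0$ is a convex combination of elements of $B(f_\mu)$, so $0\in B(f_\mu)$, hence Lemma~\ref{lem:f-mu} gives $f(S)\ge -\mu = f(W)$ for all $S\subseteq V$ and we may return $W$ along with any $y\in B(f)$ (e.g.\ obtained by pulling the $g_i$ back from $B(f_\mu)$ to $B(f)$ by inverting the modification performed at line~\ref{li:new g}).

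In the main case, after $T$ rescalings Lemma~\ref{lem:dual-vol-dec} gives $\det(R)\ge(16/9)^T$, and hence by Lemma~\ref{lem:a-k-Q-bound} there is some base $g_k\in B(f_{\mu})$ used during the algorithm whose $Q$-norm is exponentially small relative to its $2$-norm; tuning the constant in $T$ makes $\|g_k\|_Q/\|g_k\|_2 \le \delta/(c\n\Lb{f})$ for a suitable absolute $c$. Combined with Lemma~\ref{lem:lower-width}, this forces $\omega_\mu \le \delta/(c\n\Lb{f})$. I would then invoke the \emph{pull-back technique} developed in Section~\ref{sec:farkas}: it processes the triple $(g_1,\ldots,g_k,x,R)$ produced by the last sliding von Neumann call and produces a convex combination $y = \sum_i \lambda_i h_i$ of $O(\n)$ extreme bases $h_i$ of $B(f)$ (rather than of $B(f_\mu)$) whose $2$-norm is proportional to $\|g_k\|_Q$, times factors polynomial in $\n$. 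Combined with Theorem~\ref{thm:edmonds} and Theorem~\ref{thm:approx-convert} applied with $S=\textsc{MinSet}(f,y)$, this yields the desired inequality $f(W)\le y^-(V)+\delta\Lb{f}$ (possibly updating $W$ to the better of the current $W$ and this $\textsc{MinSet}$). Reducing the support of the convex combination to $O(\n)$ bases, in case the pull-back procedure produces more, is handled by Carath\'eodory's theorem as in Remark~\ref{rmk:caratheodory}.

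For the running time, each sliding von Neumann call runs in $O(\n^3\cdot\EO+\n^4)$ time by Lemma~\ref{lemma:sliding VN} and the per-iteration cost computed in the proof of Theorem~\ref{thm:main-weakly}, and each rescaling costs an additional $O(\n^4)$; with $T=O(\n\log(\n\delta^{-1}))$ outer iterations this totals $O((\n^4\cdot\EO+\n^5)\log(\n\delta^{-1}))$, which also absorbs the Carath\'eodory reduction. The main obstacle is the pull-back step: the vectors $g_i$ belong to base polytopes $B(f_{\mu'})$ for several increasing values $\mu'$ (only the final value is $B(f_\mu)$, and Lemma~\ref{lem:f-mu} guarantees containment), so the pull-back must both revert the artificial $(+\delta,-\delta)$ shift on the first and last coordinates used at line~\ref{li:new g} and match the $Q$-norm bound against the $L_f$-scale in the approximation guarantee; all of this is what Section~\ref{sec:farkas} is designed to accomplish, so I defer the technical lemma to that section and state this theorem as its corollary.
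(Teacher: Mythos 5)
Your overall plan is close to the paper's -- run {\sc Rescaling-SFM} with the smaller rescaling budget, use Lemmas~\ref{lem:dual-vol-dec} and \ref{lem:a-k-Q-bound} to identify a base $g_k$ with tiny $Q$-norm, and then ``pull back'' through the sequence of rescalings to certify something about the original geometry. But there is a genuine gap in the final conversion step, and a couple of misdirections along the way.

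The central issue is your invocation of Theorem~\ref{thm:approx-convert}. That theorem requires $z\in B(f)$ satisfying the approximate min-norm condition $\|z\|^2_2\le z^\T x+\delta^2$ for all $x\in B(f)$. What the pull-back actually produces is only a vector $v\in B(f_\mu)$ with \emph{small norm}; having $\|v\|_2$ small does not imply $v$ approximately minimizes the norm over $B(f_\mu)$ (it is entirely possible that $v^\T x < \|v\|_2^2$ for some $x$ in the polytope), so the hypothesis of Theorem~\ref{thm:approx-convert} need not hold, and the conclusion cannot be drawn. The paper bypasses this issue entirely: it never appeals to approximate min-norm guarantees, but instead uses Lemma~\ref{lem:slide-back}, which converts a small-$\ell_1$-norm point $v\in B(f_\mu)$ directly into $y\in B(f)$ with the explicit inequality $f(W)\le y^-(V)+\tfrac12\|v\|_1$ for the \emph{specific} $W$ maintained throughout the algorithm. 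That lemma is the missing link in your argument and you do not reference it.

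Two smaller misdirections: first, Lemma~\ref{lem:lower-width} and the condition number $\omega_\mu$ are used in the proof of Theorem~\ref{thm:main-weakly} (for \emph{exact} termination), but play no role in the proof of Theorem~\ref{thm:oracle-running time}; the bound from Lemma~\ref{lem:a-k-Q-bound} is fed directly into the pull-back rather than through $\omega_\mu$. Second, you conflate two distinct operations: the pull-back (Lemma~\ref{lemma:pullback}) is a purely geometric statement converting a small $Q$-norm into a small $2$-norm for a point in the convex hull of the bases used, after which you must \emph{normalize} to obtain a convex combination $v\in B(f_\mu)$; the reversal of the $(+\delta,-\delta)$ shifts at line~\ref{li:new g} is a separate, combinatorial operation performed by Lemma~\ref{lem:slide-back} after the fact. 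The pull-back also iterates over \emph{all} rescaling matrices in reverse order, not only the triple from the last sliding von Neumann call. These details matter for the running-time accounting as well as the correctness.

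Once Lemma~\ref{lem:slide-back} is in place, your running-time and Carath\'eodory remarks are fine, and the early-termination case is a degenerate instance of the same chain ($v=0$). But as stated, the route through Theorem~\ref{thm:approx-convert} does not close.
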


To prove the above theorem we will use a \emph{``pull-back''} technique. Recall that in \textsc{Rescaling-SFM}, we keep modifying the matrix $Q$ defining the scalar product. Lemmas~\ref{lem:dual-vol-dec} and \ref{lem:a-k-Q-bound} guarantee that after $t$ rescalings, we can identify a vector $g\in B(f_\mu)$ that has a small $Q$-norm for the current $Q$, and the bound decreases geometrically with $t$. Our key technical claim, Lemma~\ref{lemma:pullback}, shows a constructive way to identify a vector $v\in B(f_\mu)$ with $\|v\|_2\le \|g\|_Q$. Provided a vector $v$ with small 2-norm (and thus small 1-norm), we can easily satisfy the requirements of \textsc{Approx-SFM}, using the following lemma.

%\begin{lemma}\label{lem:slide-back}
%Let $\mu\geq\max\{0,-f(V)\}$ and $W\subseteq V$ such that $f(W)=-\mu$. Let $ \mu_1,\mu_2,\cdots,\mu_h\in [0, \mu]$, and for $i=1,\ldots,h$ let $g_i$ be an extreme base of  $B({f_{\mu_i}})$. Given $v=\sum_{i=1}^h \lambda_i g_i$ where $\lambda_1,\ldots,\lambda_h\geq 0$ and $\sum_{i=1}^h \lambda_i=1$, in time $O( n h)$ we can compute $y\in B(f)$, given as a convex combination of $h$ extreme bases of $B(f)$, such that $$f(W)\leq y^-(V)+\frac{\|v\|_1}{2}.$$
%\end{lemma}
%\begin{proof}
%For $i=1,\ldots,h$, let $\bar g_i$ be the extreme base of $B(f)$ defined by the same ordering which defined the extreme base $g_i$ of $B({f_{\mu_i}})$.
%Define $y:=\sum_{i=1}^h \lambda_i \bar g_i$.
%
%Observe that, given $i\in[h]$, if $v_1,\ldots,v_ n$ is the ordering defining $g_i$, then $\bar g_i(v_1)=g_i(v_1)-\mu_i$, $\bar g_i(v_j)=g_i(v_j)$ for $j=2,\ldots, n-1$, and $\bar g_i(v_ n)= g_i(v_ n)+\mu_i+f(V)$. Thus, computing $\bar g_1,\ldots,\bar g_h$ requires time $O(h)$ and computing $y$ requires time $O( n h)$. Furthermore, we have that $\|\bar g_i\|_1\leq \|g_i\|_1+2\mu+f(V)$.
%This implies that
%$$\|y\|_1\leq \|v\|_1+2\mu+f(V)=\|v\|_1-2f(W)+f(V).$$
%Since $\|y\|_1=f(V)-2y^-(V)$, the above implies
%$$f(W)\leq \frac{\|v\|_1+f(V)-\|y\|_1}{2}=y^-(V)+\frac{\|v\|_1}{2}.$$
%\end{proof}

\begin{lemma}\label{lem:slide-back}
Let $\mu\geq\max\{0,-f(V)\}$ and $W\subseteq V$ such that $f(W)=-\mu$. Let $\Gamma\subseteq B(f_\mu)$ finite and $\{\mu_g\}_{g\in\Gamma}\subseteq  [0,\mu]$ be such that $g$ is an extreme base of  $B({f_{\mu_g}})$ for all $g\in\Gamma$. Given $v=\sum_{g\in\Gamma} \lambda_g g$ where $\lambda_g\geq 0$, $g\in\Gamma$, and $\sum_{g\in\Gamma} \lambda_g=1$, in time $O( n |\Gamma|)$ we can compute $y\in B(f)$, given as a convex combination of $|\Gamma|$ extreme bases of $B(f)$, such that $$f(W)\leq y^-(V)+\frac{\|v\|_1}{2}.$$
\end{lemma}
\begin{proof}
For all  $g\in\Gamma$, let $\tilde g$ be the extreme base of $B(f)$ defined by the same ordering which defined the extreme base $g$ of $B({f_{\mu_g}})$.
Define $y:=\sum_{g\in\Gamma} \lambda_g \tilde g$.

Observe that, given $g\in\Gamma$, if $v_1,\ldots,v_ n$ is the ordering defining $g$, then $\tilde g(v_1)=g(v_1)-\mu_g$, $\tilde g(v_j)=g(v_j)$ for $j=2,\ldots, n-1$, and $\tilde g(v_n)= g(v_ n)+\mu_g+f(V)$. Thus, computing $\tilde g$ for all $g\in\Gamma$ requires time $O(|\Gamma|)$ and computing $y$ requires time $O( n |\Gamma|)$. Furthermore, we have that $\|\tilde g-g\|_1=2\mu_g+f(V)\le 2\mu+f(V)$.
This implies that
$$\|y\|_1=\|\sum_{g\in\Gamma} \lambda_g \tilde g\|_1\le \|\sum_{g\in\Gamma} \lambda_g g\|_1+\sum_{g\in\Gamma} \lambda_g \|\tilde g-g\|_1\le \|v\|_1+2\mu+f(V)=\|v\|_1-2f(W)+f(V).$$
Since $\|y\|_1=f(V)-2y^-(V)$, the above implies
$$f(W)\leq \frac{\|v\|_1+f(V)-\|y\|_1}{2}=y^-(V)+\frac{\|v\|_1}{2}.$$
\end{proof}

Our next lemma enables pulling back a  vector with small $Q$-norm to a vector with no larger 2-norm. This is done gradually, by pulling back at each rescaling of \textsc{Rescaling-SFM}. The lemma is not specific to the context of submodular function minimization. In our application, the columns of the matrix $A$ will be the bases used in the current iteration of the sliding von Neumann algorithm.
We also note that this technique is applicable to the general Full Support Image Algorithm in \cite{part-1}, enabling to find approximate solutions as well as dual certificates of infeasibility.

\begin{lemma}
\label{lemma:pullback} Let $A\in\R^{ n\times p}$, $R\in\bb{S}_{++}^n$,  $Q=R^{-1}$, and $\varepsilon>0$. Let $x\in\R^p_+$ such that $y:=\sum_{i=1}^p x_i \frac{a_i}{\|a_i\|_Q}$ satisfies $\|y\|_Q\leq \varepsilon$. Define
\begin{equation}\label{eq:rescale-pullback}
R'\eqdef \frac{1}{(1+\varepsilon)^2}\left(R+\sum_{i=1}^p \frac{x_i}{\|a_i\|_Q^2} a_i a_i^\T\right),
\end{equation}
and $Q'\eqdef (R')^{-1}$. For every $v\in \R^{n}$, there exists  $\nu\in\R^p_+$ such that $\|v+A\nu\|_Q\leq \|v\|_{Q'}$.
Moreover, such a vector $\nu$ can be computed in time $O(n^2 p)$.
\end{lemma}
\begin{proof}
For any given $v\in \R^{n}$, we define  $u\eqdef\displaystyle\frac{1}{(1+\varepsilon)^2}RQ'v$ and let
\begin{equation}\label{eq:lambda multi-rank}\beta\eqdef\max_{i\in [p]} \frac{\scalar{a_i,u}_Q}{\|a_i\|_Q},\quad
\nu_i\eqdef \frac{x_i}{\|a_i\|_Q} \left(\beta-\frac{\scalar{a_i,u}_Q}{\|a_i\|_Q}\right)\quad \mbox{for } i\in [p].\end{equation}
We will show that the statement is satisfied by the choice of $\nu\in\R^p_+$ defined above. These values can be clearly computed in $O(n^2 p)$ time.

First, we observe that, by substituting the definitions of $R'$ and $u$, we obtain
$$v=R'QRQ'v=u+\sum_{i=1}^p x_i \frac{\scalar{a_i,u}_Q}{\|a_i\|_Q} \frac{a_i}{\|a_i\|_Q},$$
which, from the definition of $\nu$ and $\beta$, implies that
\begin{equation}\label{eq:v+A mu} v+A\nu=u+\beta y.\end{equation}

Next, notice that
\begin{eqnarray*}
\|v\|_{Q'}&=&\sqrt{ v^\T Q' R' Q' v}=\frac{1}{(1+\varepsilon)}\left((v^\T Q' )R (Q'v)+v^\T Q' \left(\sum_{i=1}^p \frac{x_i}{\|a_i\|^2_Q} a_i a_i^\T\right) Q' v\right)^{\frac 12}\\
&\geq &\frac{1}{(1+\varepsilon)}\left((v^\T Q' R)Q (RQ'v)\right)^{\frac 12} =
(1+\varepsilon)\|u\|_Q.
\end{eqnarray*}

From the above and observing that $|\beta|\leq \|u\|_Q$, from the definition of $\beta$, we have
$$\|v+A\nu\|_Q\leq \|u\|_Q+|\beta| \|y\|_Q\leq (1+\varepsilon)\|u\|_Q\leq  \|v\|_{Q'},$$
where the first inequality follows from \eqref{eq:v+A mu} and the triangle inequality.
\end{proof}

\begin{remark}
  \label{rmk:caratheodory} If a vector $y$ is expressed
  as a convex combination of $\kappa$ extreme bases, then in time $O(n^2 \kappa)$ we can reduce the number of extreme bases in the convex combination to $O(n)$ by Carath\'eodory's theorem.
  \end{remark}

We are ready to prove Theorem~\ref{thm:oracle-running time}, showing how \textsc{Approx-SFM} can be implemented using \textsc{Rescaling-SFM}.

\begin{proof}[Proof of Theorem~\ref{thm:oracle-running time}.]
For the second part of the statement, note that if we are given $y\in B(f)$ and $W\subset V$ such that $f(W)\le y^-(V)+\delta L_f$, it follows from Theorem~\ref{thm:edmonds} that  $f(W)\leq \min_{S\subseteq V} f(S) +\delta$.

We now describe how to find such a vector $y$ and set $W$. Run algorithm \textsc{Rescaling-SFM}($f$), setting the limit on the number of rescalings to a number $T=c{n}\log(n\delta^{-1})$ for some constant $c$ to be specified later. At the end of the execution, we identified a value $\mu>0$ and a set $W\subseteq V$ such that $f(W)=-\mu$. Let $\Gamma$ be the set of points in $B(f_\mu)$ maintained throughout the algorithm.

By Lemma~\ref{lem:a-k-Q-bound}, for an appropriate choice of $c$, after $T$ rescalings there exists $\bar g\in\Gamma$ such that
$$\|\bar g\|_Q\le\frac{2\delta}{4\sqrt{n}}\|\bar g\|_2.$$  Let $\hat g=\bar g/\|\bar g\|_2$. The running time of \textsc{Rescaling-SFM}($f$) with the above choice of $T$ is  $O(({n}^4\EO+{n}^5)\log(n\delta^{-1}))$. Note also that $|\Gamma|\in O({n}^3 \log(n\delta^{-1}))$, thus finding $\bar g$ requires time $O({n}^5 \log(n\delta^{-1}))$ to compute the $Q$-norms of all elements of $\Gamma$.

By  applying Lemma~\ref{lemma:pullback} for $T$ times (considering the rescaling matrices used in the algorithm in reverse order), we can find a vector $\nu\in\R^\Gamma_+$ such that $\|\hat g+\sum_{g\in\Gamma}\nu_g g \|_2\leq \|\hat g\|_Q$.  Recall that each rescaling matrix is defined by at most ${n}^2$ vectors in $\Gamma$, therefore each application of Lemma~\ref{lemma:pullback} requires time $O(n^4)$ (assuming that the matrices $Q$ and $R$ used at every rescaling are saved in memory so we do not need to recompute them). Thus, overall,  the time required to compute $\nu$ is  $O({n}^5\log({n} \delta^{-1}))$.

Define $\alpha=1+\|\bar g\|_2\cdot \sum_{i=1}^{h}\nu_g$, and $\lambda\in\R_+^\Gamma$ by
$$
\lambda_g=\left\{\begin{array}{ll}
 \displaystyle        \frac{\|\bar g\|_2\nu_g}{\alpha},& g\in\Gamma\sm\{\bar g\} \\
 \displaystyle  \frac{1+\|\bar g\|_2\nu_{\bar g}}{\alpha}, & g=\bar g
       \end{array}\right.
$$
Define $v:=\sum_{g\in\Gamma}\lambda_g g$. Observe that $\sum_{g\in\Gamma}\lambda_g=1$, thus $v\in B(f_\mu)$. Computing $v$ requires time $O({n}^4 \log(n\delta^{-1}))$, since we need to sum $|\Gamma|$ ${n}$-dimensional vectors.

Furthermore,
$$\|v\|_1\leq \sqrt{n}\|v\|_2=\sqrt{n}\frac{\|\bar g\|_2}{\alpha}\left\|\hat g+\sum_{g\in\Gamma}\nu_g g \right\|_2\leq \sqrt{n}L_{f_\mu,2}\|\hat g\|_Q\leq 2\delta L_{f},$$
where the last inequality follows from the fact that $L_{f_\mu,2}\leq L_{f_\mu}\leq 4L_{f}$ by Lemma~\ref{lem:vector-length}. By Lemma~\ref{lem:slide-back}, in time $O({n}^4\log({n}\delta^{-1}))$ we can compute $y\in B(f)$ satisfying $f(W)\leq y^-(V)+\frac{\|v\|_1}{2}\leq y^-(V)+\delta L_f$.

Finally, since $y$ is expressed as a convex combination of $|\Gamma|\in O(n^3\log({n}\delta^{-1}))$ extreme bases, by Remark~\ref{rmk:caratheodory} we can express $y$ as a convex combination of $O({n})$ extreme bases  in time $O(n^5\log({n}\delta^{-1}))$.
\end{proof}

\begin{remark}\em The bound $O({n}^5 \log(n\delta^{-1}))$ for computing $\nu$ in the above proof was assuming $O({n}^2)$ time for computing $Q$-scalar products $\scalar{g,u}_Q$. We note that this can be easily improved by a factor $n$: we can assume that $Qg$ was precomputed and stored during \textsc{Rescaling-SFM} for all bases $g$ used during the sequence of rescalings. Indeed, it was necessary to compute the norms $\|g\|_Q$ in the sliding von Neumann algorithm. Thus, the bound improves to $O({n}^4 \log(n\delta^{-1}))$; however, this does not change the overall running time estimate.
\end{remark}

\paragraph{Finding a dual certificate in Rescaling-SFM}
For an integer valued $f$, a pair $W$ and $y$ satisfying the requirements of \textsc{Approx-SFM}$(f,1/\Lb{f})$ are an optimal pair of primal and dual solutions as in Theorem~\ref{thm:edmonds}. Hence the algorithm of Theorem~\ref{thm:oracle-running time} for $\delta=1/\Lb{f}$ provides a dual certificate of optimality in time $O(({n}^4\cdot \EO + {n}^5)\log ({n}\Lb{f}))$, the same as the complexity bound as in Theorem~\ref{thm:main-weakly} (using that $\Lb{f}\le \sqrt{n}\Lbt{f}$).

\paragraph{Approximate infeasibility certificates for conic linear programs}

Consider the feasibility problem for conic linear programs of the form
\begin{equation}\label{eq:conic}
A^\T z>0,
\end{equation}
where $A$ is a $p\times n$ matrix of rank $n$, whose columns $a_1,\ldots,a_p$ are assumed to have unit $\ell_2$-norm. It is well known that \eqref{eq:conic} is feasible if and only if $0$ is not contained in the convex hull of $a_1,\ldots,a_p$. Alternatively, one can say that \eqref{eq:conic} is infeasible if and only $\min\{\|v\|_2\st v\in\conv(A)\}=0$. For any $\delta>0$, Lemma~\ref{lemma:pullback} allows us to compute in time $O(n^2 p^2\log(\delta^{-1}))$, via the Image Algorithm of \cite{part-1},
either a solution to \eqref{eq:conic}, or a {\em $\delta$-approximate certificate of infeasibility}, that is, a point $v\in\conv(A)$ such that $\|v\|_2\leq \delta$.

Just as Algorithm~\ref{alg:main-weakly} in this paper, the Image Algorithm maintains matrices $R, Q\in\bb{S}_{++}^n$, $Q=R^{-1}$, which are updated at every rescaling. Initially $R=Q=I_n$. Between each rescaling, the algorithm uses von Neumann's method to compute, in at most $\ceil{\varepsilon^{-2}}$ iterations, a vector $x\in\R^p_+$, $\vec{e}^\T x=1$, such that the point $y:=\sum_{i=1}^p x_i \frac{a_i}{\|a_i\|_Q}$ satisfies either $A^\T Qy>0$ or $\|y\|_Q\leq \varepsilon$, for $\varepsilon:=1/(20n)$. In the former case, the algorithm stops since $z:=Qy$ is a feasible solution to \eqref{eq:conic}. In the latter case, $R$ is replaced with the matrix $R'$ given in \eqref{eq:rescale-pullback}, and $Q=R^{-1}$ is recomputed. After $T=O(n\log(\delta^{-1}))$ rescalings, by Lemmas~\ref{lem:dual-vol-dec} and~\ref{lem:a-k-Q-bound} there exists a column $a_k$, $k\in [p]$, such that $\|a_k\|_Q\leq \delta$. By  applying Lemma~\ref{lemma:pullback} for $T$ times (considering the rescaling matrices used in the algorithm in reverse order), we can find a vector $\nu\in\R^p_+$ such that $\|a_k+A\nu \|_2\leq \|a_k\|_Q\leq \delta$. If we define $$v=\frac{a_k+A\nu}{1+\|\nu\|_1},$$ then $v\in\conv(A)$ and $\|v\|_2\leq \delta/(1+\|\nu\|_1)\leq \delta$. Each call of von Neumann algorithm requires $O(np^2)$ arithmetic operations
(see proof of Theorem~3.2 in \cite{part-1}), whereas each application of Lemma~\ref{lemma:pullback} requires $O(n^2p)$ operations. It follows that the overall running time of the Image algorithm is $O(n^2p^2\log(\delta^{-1}))$.

%%% Local Variables:
%%% mode: latex
%%% TeX-master: "Rescaled-LP-sfm"
%%% End:

\section{Strongly polynomial algorithms}\label{sec:strongly}
In this section, we provide a general scheme to convert an approximate
SFM algorithm to a strongly polynomial one. We assume that the SFM
algorithm is provided via the following oracle.

\begin{center} \fbox{\begin{minipage}{0.7\textwidth} \noindent
{\sf Oracle } {\sc Approx-SFM} \\
{\bf Input:}  \ A submodular function $f:2^V\to\mathbb{R}$ and $\delta>0$.\\
{\bf Output:}  A set $W\subseteq V$, and a vector $y\in B(f)$ such that
\[
f(W)\le y^-(V) + \delta \Lb{f}.
\]
Further, $y$ is given as a convex combination of $O(n)$ bases of
$B(f)$.
\end{minipage}}\end{center}
\medskip

Recall from
Theorem~\ref{thm:edmonds} that $f(S)\ge y^-(V)$ for any $S\subseteq
V$. Consequently,  the set $W$ returned by the oracle is within $\delta
L_f$ from the optimal solution to \eqref{prob:sfm}.

The output of the oracle will be used via
the following lemma.
\begin{lemma}\label{lem:contract-element}
Let $y\in B(f)$ and $W\subseteq V$ such that $f(W)\le y^-(V)+\delta L_f$. If
$y(v)<-\delta\Lb{f}$, then $v$ must be contained in every
minimizer of $f$.
\end{lemma}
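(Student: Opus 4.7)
The plan is to argue by contradiction: suppose $T$ is a minimizer of $f$ with $v\notin T$, and show this forces $y(T)>f(T)$, contradicting $y\in B(f)$.

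The first step uses Edmonds' theorem (Theorem~\ref{thm:edmonds}) together with the oracle guarantee $f(W)\le y^-(V)+\delta\Lb{f}$. Since $y\in B(f)$, Edmonds gives $y^-(V)\le f(T)$ for the minimizer $T$, and since $T$ is a minimizer we also have $f(T)\le f(W)$. Chaining these yields
\[
y^-(V)\ \le\ f(T)\ \le\ y^-(V)+\delta\Lb{f}.
\]

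The key step is to estimate $y(T)-y^-(V)$ from below when $v\notin T$ and $y(v)<-\delta\Lb{f}$. Splitting the sums by sign and by membership in $T$,
\[
y(T)-y^-(V)\ =\ \sum_{u\in T,\; y(u)\ge 0}y(u)\ -\sum_{u\notin T,\; y(u)<0}y(u).
\]
Both sums are nonnegative, so the right-hand side is at least $-y(v)$ (taking only the single term $u=v$ in the second sum, which is valid because $v\notin T$ and $y(v)<0$). Since $-y(v)>\delta\Lb{f}$ by hypothesis,
\[
y(T)\ >\ y^-(V)+\delta\Lb{f}\ \ge\ f(T).
\]
This contradicts $y(T)\le f(T)$, which holds because $y\in B(f)$. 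Hence every minimizer of $f$ contains $v$.

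No step here looks to be a genuine obstacle: the argument is a short application of Edmonds' duality combined with a sign decomposition of $y$. The only thing worth being careful about is invoking the oracle guarantee with the right direction of inequality so that the slack $\delta\Lb{f}$ is absorbed on the correct side.
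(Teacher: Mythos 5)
Your proof is correct and is essentially the paper's argument, reorganized as a contradiction. The paper shows directly that for any $S\subseteq V\setminus\{v\}$,
\[
f(S)\ \ge\ y(S)\ \ge\ y^-(V\setminus\{v\})\ =\ y^-(V)-y(v)\ \ge\ f(W)-\delta\Lb{f}-y(v)\ >\ f(W),
\]
so $S$ cannot be optimal. Your sign decomposition $y(T)-y^-(V)=\sum_{u\in T,\,y(u)\ge 0}y(u)-\sum_{u\notin T,\,y(u)<0}y(u)\ge -y(v)$ is exactly the inequality $y(T)\ge y^-(V\setminus\{v\})$, just derived in more detail; the rest of the chain is the same. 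One small remark: your opening invocation of Edmonds' theorem to get the lower bound $y^-(V)\le f(T)$ is never used in the final contradiction -- only the upper bound $f(T)\le f(W)\le y^-(V)+\delta\Lb{f}$ is needed -- so that step could be dropped.
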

\begin{proof}
Let $S\subseteq V\setminus \{v\}$. Then
$f(S)\ge y(S)\ge y^-(V\setminus\{v\})
\ge f(W)-y(v) -\delta \Lb{f}
> f(W).
$
This shows that $S$ cannot be an optimal solution to
\eqref{prob:sfm}.
\end{proof}
For a $y\in B(f)$ returned by the approximate oracle and an element $v$
as in the lemma, minimizing $f$ can be reduced to minimizing the contraction $f':2^{V\setminus\{v\}}\to
\mathbb{Z}$, defined as $f'(S)\eqdef f(S\cup\{v\})-f(\{v\})$.

The existence of such a $v$ will not always be
guaranteed. If we cannot immediately reduce the size of the ground set
by contraction, we will still be able to identify
structural properties of optimal solutions using the next simple lemma:
\begin{lemma}\label{lem:extend-U}
Let $y\in B(f)$, $U\subseteq V$, and $v\in V\setminus U$.
Assume that $y(v)>-y^-(V\setminus U)$.
Then any minimizer to \eqref{prob:sfm} that contains  $v$ must contain some element of $U$.
\end{lemma}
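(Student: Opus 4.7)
The plan is to argue by contradiction: assume there exists a minimizer $S\subseteq V$ with $v\in S$ and $S\cap U=\emptyset$, and derive that $f(S)>0=f(\emptyset)$, which contradicts the minimality of $S$.

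First I would use that $y\in B(f)$, which gives the lower bound $f(S)\ge y(S)=y(v)+y(S\sm\{v\})$. The right-hand side needs to be bounded from below in terms of the quantities appearing in the hypothesis, namely $y(v)$ and $y^-(V\sm U)$. Since $y\ge y^-$ componentwise, we have $y(S\sm\{v\})\ge y^-(S\sm\{v\})$. Now the assumption $S\cap U=\emptyset$ together with $v\in S$ gives $S\sm\{v\}\subseteq (V\sm U)\sm\{v\}$, and since $y^-$ is a non-positive set function that only decreases when more elements are added, this yields $y^-(S\sm\{v\})\ge y^-((V\sm U)\sm\{v\})=y^-(V\sm U)-y^-(v)$.

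The next observation is that the hypothesis $y(v)>-y^-(V\sm U)$ forces $y(v)>0$, because $-y^-(V\sm U)\ge 0$. In particular $y^-(v)=0$, so the bound simplifies to $y(S\sm\{v\})\ge y^-(V\sm U)$. Chaining these inequalities gives
\[
f(S)\ge y(v)+y^-(V\sm U)>0=f(\emptyset),
\]
by the assumption of the lemma, which is the desired contradiction to $S$ being a minimizer.

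The argument is essentially a bookkeeping exercise once the right decomposition of $y(S)$ is chosen; the only point requiring some care is the correct handling of signs for $y^-$ (which is non-positive and set-monotone decreasing), and noticing that the hypothesis automatically rules out $y(v)\le 0$. No submodularity of $f$ is needed beyond the base-polytope inequality $f(S)\ge y(S)$, and the assumption $f(\emptyset)=0$ made throughout the paper provides the contradicting benchmark.
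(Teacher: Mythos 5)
Your proof is correct and takes essentially the same route as the paper's (one-line) argument: bound $f(S)\ge y(S)=y(v)+y(S\setminus\{v\})\ge y(v)+y^-(V\setminus U)>0=f(\emptyset)$ and contradict minimality. Your detour through $(V\setminus U)\setminus\{v\}$ and the observation $y^-(v)=0$ is slightly more work than needed — since $S\setminus\{v\}\subseteq S\subseteq V\setminus U$, one gets $y^-(S\setminus\{v\})\ge y^-(V\setminus U)$ directly — but this does not affect correctness.
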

\begin{proof}
Let $S\subseteq V\setminus U$, $v\in S$. Then
$f(S)\ge y(v)+y^-(V\setminus U)> 0=f(\emptyset)$, hence $S$ cannot be a minimizer.
\end{proof}

During the algorithm we will maintain a partial order $\preceq$, where
$u\preceq v$ means that every minimizer to \eqref{prob:sfm} containing $v$ must
also contain $u$. New relations will be derived using the lemma above.
To guarantee either a contraction or a new relation in every
iteration, we need to call the oracle {\sc Approx-SFM} on a modified
version $\lf$ instead of the original $f$, introduced in the next subsection.

Section~\ref{sec:ring}
introduces the  machinery and notation. The strongly polynomial
scheme, Algorithm~\ref{alg:strongly-1}, is given in
Section~\ref{sec:strong-and-simple}.
It repeatedly calls {\sc Approx-SFM} with $\delta=1/(3n^3)$ and
contracts elements of the ground set, or learns new relations
concerning the optimal solutions to \eqref{prob:sfm}.
 Theorem~\ref{thm:strongly-1} gives the running time bound that is
 polynomial in $n$ and the running time of the approximate oracle
 calls. Section~\ref{sec:strong-fast} presents an enhanced version of
 the algorithm with improved running times.

\iffalse
Given any such an approximation oracle {\sc Approx-SFM}, let $A_{AO}(n,\delta)$ and $G_{AO}(n,\delta)$ be the maximum, taken over all submodular functions on an $n$-elements ground set, of the number of arithmetic operations and calls to the greedy algorithm performed by  {\sc Approx-SFM}, respectively.

We will show the following.

\begin{theorem}\label{thm:strongly-1}
Setting $\delta:=1/(3n^3)$, there exists an algorithm that finds the optimal solution to \eqref{prob:sfm} in time $O(n^2\cdot A_{AO}(n,\delta) +n^5+
  ((n/\log(n))\cdot G_{AO}(n,\delta)+n^5)\cdot\EO)$.
\end{theorem}

The purpose of this section is to describe an algorithm (Algorithm~\ref{alg:strongly-1}) which, using {\sc Approx-SFM} as a subroutine for the choice $\delta:=1/(3n^3)$, computes an optimal solution to \eqref{prob:sfm} in time that is polynomially bounded by $n$ and $AO(f,\delta)$. In particular, provided that   $AO(f,\delta)$ is polynomially bounded by $n$ and $\delta$ (i.e. $AO(f,\delta)$ is pseudo-polynomial), the algorithm will run in strongly polynomial time.

\fi

\paragraph{Approximate oracle implementations}
The oracle  {\sc Approx-SFM} has been used implicitly or explicitly in
several papers in the literature:
%Among these algorithms we mention
\begin{itemize}
\item   Our geometric rescaling algorithm
  (Algorithm~\ref{alg:main-weakly}), in time
  $O((n^4\cdot\EO+n^5)\log(n\delta^{-1}))$, as explained in
  Theorem~\ref{thm:oracle-running time} in Section~\ref{sec:farkas}.
\item The conditional gradient method. Theorem~\ref{thm:cond-grad}
  asserts that
 in time
    $O((n^2\cdot\EO+n^2\log n)\delta^{-2})$, we can obtain a suitable
    $y$ as a
    convex combination of $O(n/\delta^2)$ extreme bases. According to Remark~\ref{rmk:caratheodory}, we need $O(n^3/\delta^2)$ arithmetic operations to reduce the support to $O(n)$ extreme bases. This gives an overall running time of
      $O((n^2\cdot\EO+n^3)\delta^{-2})$.
  \item The Fujishige-Wolfe algorithm in time
    $O((n^2\cdot\EO+n^4)\delta^{-2})$ (Theorem~\ref{thm:wolfe}).
  \item The Iwata-Fleischer-Fujishige weakly polynomial scaling algorithm~\cite{IFF}, in time $O(n^5\cdot EO\log(n\delta^{-1}))$.\footnote{The authors give both a weakly and a strongly polynomial algorithm; here we are referring to the weakly polynomial algorithm of Section 3 in~\cite{IFF}. The observation we reported here is explicitly stated by the authors in the last paragraph of Section 3 in~\cite{IFF}.}
  \item The Iwata-Orlin weakly polynomial
    algorithm~\cite{Iwata-Orlin}, in time
    $O((n^4\cdot\EO+n^5)\log(n\delta^{-1}))$.\footnote{This is not
      explicitly stated in~\cite{Iwata-Orlin}. However, the analysis in that paper
      shows that, in time $O((n^4\cdot\EO+n^5)\log(n\delta^{-1}))$,
      one obtains a set $W\subseteq V$ and a point $x\in B(f)$ such
      that $x(W)=f(W)$, $x(v)\geq 0$ for all $v\in V\sm W$, and
      $\Phi(x):=\sum_{v\in W}(x^+(v))^2\leq \delta^2\Lb{f}^2/n$. This
      implies that $f(W)=x^-(W)+x^+(W)\leq x^-(V)+\sqrt{n\Phi(x)}\leq
      x^-(V) + \delta \Lb{f}$.}
\item the Sidford-Lee-Wong cutting plane method in $O(n^2 \cdot \EO \log(n\delta^{-1})+n^3 \log^{O(1)}
(n\delta^{-1}))$; this is shown in Section~\ref{sec:ellipsoid}.
\end{itemize}

\subsection{Ring families and the structure of optimal solutions}\label{sec:ring}

From every call to {\sc Approx-SFM}, our algorithm will either
find an element $v\in V$ as in Lemma~\ref{lem:contract-element}, or at least one
pair $(v,u)$ such that every
optimal solution to \eqref{prob:sfm} containing $v$ must also contain
$u$; these pairs will be derived using Lemma~\ref{lem:extend-U}.
In the former case, we can simply reduce the size of the ground set by
contracting such an element. To make use of the pairs $(v,u)$ obtained
in the latter case, we use the following framework, first introduced by Iwata,
Fleischer, and Fujishige \cite{IFF}, and subsequently used in
several other algorithms, e.g. in \cite{Iwata-Orlin,LSW}.

\paragraph{Directed graph and partial order}
We maintain a directed graph $D=(V,F)$, with the property that if $(u,v)\in F$,
and $S\subseteq V$ is an optimal solution to \eqref{prob:sfm} with $u\in
S$, then $v\in S$. In other words, for every minimizer $S\subseteq V$ to \eqref{prob:sfm}, $\delta^+_F(S)=\emptyset$.

We can assume that $D$ is a acyclic. Indeed, given the set
of vertices $C$ of any strongly connected component of $D$, every
minimizer of $f$ must either contain $C$ or be disjoint from $C$,
hence we can contract all vertices of $C$ into a single new node
$v_C$. That is, we replace $V$ with $V':=V\cup\{v_C\}\sm C$, and  $f$ with the function $f'\st V'\to \R$ defined by
\begin{equation}\label{eq:contract}
f'(S)\eqdef\begin{cases}
f(S),\quad&\mbox{if }S\subseteq V'\sm\{v_C\},\\
f(S\cup C\sm \{v_C\}),\quad &\mbox{if } v_C\in S\subseteq V'.
\end{cases}
\end{equation}

The acyclic graph $D=(V,F)$ defines a partial order
$\preceq_F$. We have $u\preceq_F v$ if there exists a directed path in
$F$ from $v$ to $u$ (so, in particular, $v\preceq_F v$ for all $v\in V$).
%In other words, $u\preceq_F v$ if and only if $u$ is contained in every $X\in {\cal F}$ that contains $v$.
We say that an ordering of the vertices is {\em consistent} with
$\preceq_F$, if $u$ is ordered before $v$ whenever $u\preceq_F v$. We
use the simpler notation  $u\preceq v$ if $F$ is clear from the context.

The {\em transitive closure} of the acyclic graph $D=(V,F)$ is the
graph $\bar D=(V,\bar F)$, where $(u,v)\in \bar F$ if
and only if there is a directed path from $u$ to $v$ in $F$. $D$ is said to be {\em transitive} if it coincides with its transitive closure. Clearly $D$ and its transitive
closure define the same partial order $\preceq_F$. We maintain the digraph $D=(V,F)$ transitive throughout the algorithm.

\paragraph{Ring families}
In terms of the partial order $\preceq$, every optimal solution to
\eqref{prob:sfm} is {\em a lower ideal} in the poset
$(V,\preceq)$. Let ${\cal F}$ denote the set of all lower ideals,
namely, $S\in {\cal F}$ if $v\in S$ and $u\preceq v$ implies $u\in S$.
Equivalently, in  terms of the digraph $D=(V,F)$, we let
\begin{equation}\label{eq:family F}
{\cal F}:=\{X\subseteq V\st \delta^+_F(X)=\emptyset\}.
\end{equation}
Thus, the family $\cal F$ contains all minimizers.
The family ${\cal F}\subseteq 2^V$ is  a {\em ring family}, that is, for every $X,Y\subseteq V$,
if $X,Y\in {\cal F}$ then $X\cap Y, X\cup Y\in {\cal F}$.

We note that one can define submodular functions over ring families
instead of the entire $2^V$. Submodular function minimization over
ring families has been well-studied and can be reduced to
submodular function minimization over $2^V$, see \cite[Section
3]{fujishige-book} and \cite[Chapter 49]{Schrijver03}.

The following definitions and results are similar to those in
\cite[Section 49.3]{Schrijver03}. For any $v\in V$, let
\[
\Down{v}\eqdef\{u\in V\st u\preceq v\},\quad
  \Up{v}\eqdef\{u\in V\st v\preceq u\}\\
\]
Note that $\Down{v}$ is the unique minimal element of $\F$ containing
$v$, and $V\sm \Up{v}$ is the unique maximal element of $\F$ not
containing $v$.
If $D=(V,F)$ is transitive, then for all $v\in V$ $\Down{v}=\{v\}\cup \{u\in V\st  (v,u)\in F \}$  and $\Up{v}=\{v\}\cup \{u\in V\st  (u,v)\in F \}$.

For every $X\subseteq V$, we define
\[\Down{X}\eqdef \bigcup_{v\in X} \Down{v},\quad \Up{X}\eqdef \bigcup_{v\in X} \Up{v}\]
that is, $\Down{X}$ is the unique minimal element of $\F$
containing $X$ and $V\setminus \Up{X}$ is the unique maximal element of $\F$ disjoint from $X$.

\paragraph{The lower bound  and $\lf$}

Let us define the lower bound function
\[
\low(v)\eqdef f((V\setminus \Up{v})\cup \{v\})-f(V\setminus \Up{v}),
\]
as in \cite[(3.95)]{fujishige-book}. Observe that $(V\setminus \Up{v})\cup \{v\}\in {\cal F}$ for every
$v\in V$.
It is easy to see that $z\ge\ell$ for every extreme basis $z\in B(f)$ defined by an ordering
consistent with $\preceq$. The next claim shows that $\ell$ yields lower bounds on
the function values of $f$. In particular,  if $\ell(v)>0$, then no
minimizer of \eqref{prob:sfm} contains $v$.

\begin{claim}\label{cl:ring-low}
For every $X,Y\in{\cal F}$ with $X\subseteq Y$, we have $\ell(Y\setminus X)+f(X)\le f(Y)$. In particular, if we let $P=\{v\in V\st \ell(v)> 0\}$, we have that
$f(X)> f(X\sm \Up{P})$ for all $X\in {\cal F}$.
\end{claim}
\begin{proof}
For the first part of the claim, let us take the elements of $Y\setminus X$ in a consistent order with
$\preceq$ as $z_1,\ldots,z_r$.  Then, $Z_i=X\cup\{z_1,\ldots,z_i\}\in
{\cal F}$ for each $i\in [r]$, and $Z_i\subseteq (V\setminus
\Up{z_i})\cup\{z_i\}$. Submodularity implies $f(Z_i)-f(Z_{i-1})\ge
\ell(z_i)$. The claim follows by adding up all these inequalities.

The second part of the claim follows from first part of the statement if we choose $Y=X\sm \Up{P}$ (note that $X\sm \Up{P}\in{\cal F}$ since it is the intersection of $X$ and $V\sm \Up{P}$, which are both elements of ${\cal F}$).
\end{proof}

Next, we define a submodular function $\lf \st  2^V \to \Z$ as
\[
\lf(X)\eqdef  f(\Down{X})-\low^-(\Down{X} \setminus  X).
\]
As shown in Lemma~\ref{lem:lf} below, this function is submodular,
$\lf(S)=f(S)$ for every $S\in{\cal F}$, and $\lf(S)\ge f(\Down S)$ for
all $S\subseteq V$. Consequently, $\lf$ takes its minimum value on
sets in $\cal F$, and if all minimizers in $f$ are
contained in $\cal F$, then minimizing $\lf$ is equivalent to
minimizing $f$.

%It is further true that $\lf$ is the unique minimal
%such function. That is, for any submodular $g$ with
%the above properties, $\lf\ge g$. (We omit the proof of this fact as
%it is
%not directly used.)

An important advantage of using $\lf$ instead of
$f$ is that  $z\ge \ell^-$ for all bases  $z\in B(\lf)$
(Claim~\ref{cl:ell}), and therefore the complexity parameter $L_{\lf}$
is bounded by $|\ell^-(V)|$ (Claim~\ref{cl:L-bound}). This can be arbitrarily
smaller than $L_f$, and therefore {\sc Approx-SFM}$(\lf,\delta)$
returns a higher accuracy solution than  {\sc Approx-SFM}$(f,\delta)$.

We note that the definition of $\lf$ only uses the function values of
$f$ on
$\cal F$. In fact, similar definitions can be used to reduce
submodular function minimization on ring families to minimizing submodular
functions on $2^V$, see e.g. \cite[Section 49.3]{Schrijver03}.

\begin{lemma}\label{lem:lf} The function $\lf$ is submodular
  on $2^V$ with $\lf(S)\ge f(\Down S)$ for all $S\subseteq V$ and
  $\lf(S)=f(S)$ for every $S\in\F$. Consequently, minimizing $f$ on the ring family $\F$ is equivalent
  to minimizing $\lf$ on $2^V$. The complexity of
  \textsc{GreedyMin}$(\lf,w)$ can be bounded by $O(n\cdot \EO+n^2)$,
  where $\EO$ is the complexity to evaluating $f$.
\end{lemma}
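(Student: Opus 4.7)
The plan is to dispense with the easy claims first and then tackle submodularity (the main obstacle), concluding with the complexity bound. For the boundary claims: if $S\in\F$ then $\Down{S}=S$, so $\Down{S}\sm S=\emptyset$ and $\low^-(\emptyset)=0$, giving $\lf(S)=f(S)$. For arbitrary $S\subseteq V$, nonpositivity of $\low^-$ yields $\lf(S)\ge f(\Down{S})$. Since $\Down{S}\in\F$ always, we obtain $\min_{S\subseteq V}\lf(S)\ge \min_{T\in\F}f(T)$; the reverse inequality follows from $\lf$ agreeing with $f$ on $\F$, so the two minima coincide and are attained at the same sets.

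The main obstacle is showing submodularity of $\lf$. The idea is to use modularity of $\low^-$ to peel off those terms and reduce to a statement about $f$ alone. Expanding $\lf(X)+\lf(Y)-\lf(X\cap Y)-\lf(X\cup Y)$ and using $\low^-(A\sm B)=\low^-(A)-\low^-(B)$ for $B\subseteq A$, together with $\Down{X\cup Y}=\Down{X}\cup\Down{Y}$ and the modular identity applied twice, one checks that
\begin{align*}
&\low^-(\Down{X}\sm X)+\low^-(\Down{Y}\sm Y)-\low^-(\Down{X\cap Y}\sm(X\cap Y))-\low^-(\Down{X\cup Y}\sm(X\cup Y)) \\
&\qquad = \low^-(\Down{X}\cap\Down{Y})-\low^-(\Down{X\cap Y}).
\end{align*}
Combined with submodularity of $f$ applied to the pair $\Down{X},\Down{Y}$, this reduces the submodular inequality for $\lf$ to
\[
f(A)-f(B)\ge\low^-(A\sm B)\quad\text{whenever } B\subseteq A \text{ with } A,B\in\F,
\]
applied with $A=\Down{X}\cap\Down{Y}$ and $B=\Down{X\cap Y}$ (note $B\subseteq A$). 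To prove this key inequality, list $A\sm B=\{v_1,\ldots,v_k\}$ in a topological order for $\preceq_F$ (minimal elements first) and set $B_i:=B\cup\{v_1,\ldots,v_i\}$. Downward-closedness of $B$ together with the chosen ordering imply each $B_i\in\F$ and $B_{i-1}\cap\Up{v_i}=\emptyset$, so $B_{i-1}\subseteq V\sm\Up{v_i}$. Diminishing marginal returns for submodular $f$ then give
\[
f(B_i)-f(B_{i-1})\ge f((V\sm\Up{v_i})\cup\{v_i\})-f(V\sm\Up{v_i})=\low(v_i),
\]
and telescoping together with $\low(v_i)\ge\low^-(v_i)$ yields the claim.

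For the complexity of \textsc{GreedyMin}$(\lf,w)$: first precompute $\low(v)$ for all $v\in V$ using $2\n$ calls to the evaluation oracle of $f$, in time $O(\n\cdot\EO)$. Sort $V$ in ascending order of $w$ as $v_1,\ldots,v_\n$, and maintain the sets $D_i:=\Down{\{v_1,\ldots,v_i\}}$ incrementally in $O(\n^2)$ total time using transitivity of $F$. For each $i$, evaluate $\lf(\{v_1,\ldots,v_i\})=f(D_i)-\low^-(D_i\sm\{v_1,\ldots,v_i\})$ with one $\EO$ call for $f(D_i)$ and $O(\n)$ arithmetic. The coordinates of the extreme base are the consecutive differences $\lf(\{v_1,\ldots,v_i\})-\lf(\{v_1,\ldots,v_{i-1}\})$, giving a total running time of $O(\n\cdot\EO+\n^2)$.
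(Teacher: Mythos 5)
Your proof is correct, and for the submodularity claim you take a genuinely different (and more self-contained) route than the paper. The paper proves submodularity of $\lf$ indirectly: it shows $\lf$ coincides with the function $b(X)=\min\{f(Y)-\low^-(Y\sm X)\st X\subseteq Y,\, Y\in\F\}$, invokes a submodularity theorem from Frank's book for this minimum-form expression, and uses Claim~\ref{cl:ring-low} (the inequality $\ell(Y\sm X)+f(X)\le f(Y)$ for $X\subseteq Y$ in $\F$) both to verify the hypotheses of that theorem and to show the minimum is attained at $Y=\Down{X}$. You instead verify the submodular inequality for $\lf$ directly: using modularity of $\low^-$ and the identity $\Down{X\cup Y}=\Down{X}\cup\Down{Y}$, you cancel the $\low^-$ terms down to $\low^-(\Down{X}\cap\Down{Y})-\low^-(\Down{X\cap Y})$, invoke submodularity of $f$ on $\Down{X},\Down{Y}$, and reduce to the inequality $f(A)-f(B)\ge\low^-(A\sm B)$ for nested $A,B\in\F$. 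That reduced inequality is precisely (a weakening of) the paper's Claim~\ref{cl:ring-low}, and your telescoping proof of it mirrors the paper's proof of that claim, including the observation that $V\sm\Up{v_i}$ and $(V\sm\Up{v_i})\cup\{v_i\}$ lie in $\F$ so that the diminishing-returns step is legitimate for a function defined only on $\F$. The trade-off: your argument is elementary and avoids the external reference, at the cost of the explicit modular bookkeeping; the paper's argument is shorter on the page but outsources the work. The remaining claims (agreement with $f$ on $\F$, the lower bound $\lf(S)\ge f(\Down{S})$, equivalence of minimization, and the $O(n\cdot\EO+n^2)$ bound for \textsc{GreedyMin} via incremental maintenance of $\Down{\{v_1,\ldots,v_i\}}$) coincide with the paper's treatment.
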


\begin{proof}[Proof of Lemma~\ref{lem:lf}]
To prove that $\lf$ is submodular, we show that $\lf=b$ where $b:\, 2^V\rightarrow \Z$ is defined by
\[
b(X)\eqdef\min\{ f(Y)-\low^-(Y\setminus X)\st X\subseteq Y, Y\in {\cal
  F}\}.
\]
Observe that, by Claim~\ref{cl:ring-low}, $\low^-(Y)\le f(Y)$ for every $Y\in {\cal F}$. In particular,
$b$ is the so called {\em contraction of $f$ by vector $\ell^-$}, defined in~\cite[(3.23)]{fujishige-book},
where it is shown  that $b$ is submodular.

Let us show
that $b=\lf$. By definition, $b(X)\le \lf(X)$ for any
$X\subseteq V$. Consider now $Y\in{\cal F}$ such that $Y\supsetneq \Down X$. Again by
Claim~\ref{cl:ring-low}, we have
$f(Y)\ge f(\Down X) +\low^-(Y\setminus \Down X)$; this implies that
$\Down X$ is the minimizer in the definition of $b$, therefore $\lf=b$.

The nonpositivity of $\ell^-$ gives that $\Down f(S)\ge f(\Down S)$
for all $S\subsetneq V$; it is clear that $\Down f(S)=f(S)$ for all
$S\in{\cal F}$, $S\neq V$. Regarding the complexity of
\textsc{GreedyMin}, one needs to compute the values of $\Down
f(\{v_1,\ldots,v_i\})$ for every $i\in[n]$ for a given order of the
vertices; thus, we need to find the sets
$S_i=\Down{\{v_1,\ldots,v_i\}}$. When moving from $i$ to $i+1$, we need to compute
$S_{i+1}=S_i\cup\Down{v_i}$, which can be done in $O(n)$ time because $F$ is maintained as a
transitive digraph. Adding
the $\ell^-$ values also take $O(n)$ time for each set. Hence, we
obtain an overhead $O(n^2)$ over the $O(n \cdot\EO)$ oracle queries and
$O(n \log n)$ time for sorting the ground set.
\end{proof}

\begin{claim}\label{cl:ell}
For every $v\in V$, $\lf(V) - \lf(V\setminus\{v\})\in\{\low^-(v),\low(v)\}$. In particular, $y(v)\ge \low^-(v)$ for every
$y\in B(\lf)$.
\end{claim}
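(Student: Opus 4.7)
The plan is to compute both $\lf(V)$ and $\lf(V\setminus\{v\})$ directly from the definition of $\lf$, doing a small case split depending on whether $V\setminus\{v\}$ belongs to the ring family $\F$ or not, and then deduce the consequence from the standard base polytope inequalities $y(V)=\lf(V)$ and $y(V\setminus\{v\})\le\lf(V\setminus\{v\})$.

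First, since no arc can leave $V$, we have $V\in\F$, so $\Down{V}=V$ and $\lf(V)=f(V)-\low^-(\emptyset)=f(V)$. For $\lf(V\setminus\{v\})$ I distinguish two cases. Observe that $V\setminus\{v\}\in\F$ if and only if no arc of $F$ enters $v$, i.e.\ $\Up{v}=\{v\}$.

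\emph{Case 1: $\Up{v}=\{v\}$.} Then $V\setminus\{v\}\in\F$, so $\Down{V\setminus\{v\}}=V\setminus\{v\}$ and $\lf(V\setminus\{v\})=f(V\setminus\{v\})$. Since $V\setminus\Up{v}=V\setminus\{v\}$ and $(V\setminus\Up{v})\cup\{v\}=V$, the definition of $\low(v)$ yields $\low(v)=f(V)-f(V\setminus\{v\})$, and thus $\lf(V)-\lf(V\setminus\{v\})=\low(v)$.

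\emph{Case 2: $\Up{v}\supsetneq\{v\}$.} Then $V\setminus\{v\}\notin\F$, and since $V\in\F$ is the only strictly larger set containing $V\setminus\{v\}$, we get $\Down{V\setminus\{v\}}=V$ and $\Down{V\setminus\{v\}}\setminus(V\setminus\{v\})=\{v\}$. Therefore $\lf(V\setminus\{v\})=f(V)-\low^-(v)$, and the difference equals $\low^-(v)$.

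In either case $\lf(V)-\lf(V\setminus\{v\})\in\{\low^-(v),\low(v)\}$. For the consequence, pick any $y\in B(\lf)$; then $y(V)=\lf(V)$ and $y(V\setminus\{v\})\le\lf(V\setminus\{v\})$, so
\[
y(v)=y(V)-y(V\setminus\{v\})\ge \lf(V)-\lf(V\setminus\{v\})\ge \low^-(v),
\]
where the last step uses the first part of the claim together with the trivial inequality $\low(v)\ge\low^-(v)$. There is no serious obstacle here; the only point requiring care is the combinatorial equivalence between $V\setminus\{v\}\in\F$ and $\Up{v}=\{v\}$, which follows directly from the definition of $\F$ via the digraph $(V,F)$.
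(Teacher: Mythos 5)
Your proof of the main equality is exactly the paper's argument: the same case split on whether $V\setminus\{v\}\in\F$, with the same computations of $\Down{(V\setminus\{v\})}$ and of $\lf(V\setminus\{v\})$ in each case. For the ``in particular'' consequence you take a slightly different, more direct route: you invoke the defining constraints $y(V)=\lf(V)$ and $y(V\setminus\{v\})\le\lf(V\setminus\{v\})$ of $B(\lf)$ to get $y(v)\ge\lf(V)-\lf(V\setminus\{v\})$ for an arbitrary $y\in B(\lf)$, whereas the paper argues via extreme bases, writing $g(v)=\lf(S)-\lf(S\setminus\{v\})$ for a suitable $S\ni v$ and then applying submodularity of $\lf$ to compare with $\lf(V)-\lf(V\setminus\{v\})$. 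The two are essentially the same inequality in different clothing, but yours avoids the detour through the extreme-base representation and is arguably cleaner; the paper's version has the mild advantage of exhibiting the monotone-difference interpretation of the entries of an extreme base. Both are correct.
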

\begin{proof}
If $V\setminus \{v\}\notin{\cal F}$, then $\Down{(V\sm\{v\})}=V$, therefore $\lf(V) - \lf(V\setminus\{v\})=f(V)-f(V)+\low^-(v)$ by definition of $\lf$. If $V\setminus\{v\}\in {\cal F}$,
then $\Up{v}=\{v\}$, therefore $\lf(V) - \lf(V\setminus\{v\})=f(V)-f(V\sm\{v\})=f((V\setminus \Up{v})\cup \{v\})-f(V\setminus \Up{v})=\low(v)$. For the last part, note that for any extreme base $g$ of $B(\lf)$, $g(v)=\lf(S)-\lf(S\sm\{v\})$ for some $S\subseteq V$ containing $v$, and by submodularity $\lf(S)-\lf(S\sm\{v\})\geq \lf(V) - \lf(V\setminus\{v\})\geq\low^-(v)$.
\end{proof}
We remark that the second part of the Claim~\ref{cl:ell} also follows
from~\cite[Theorem 3.6]{fujishige-book}. The next claim shows that
$|\ell^-(V)|$ is a good approximation of the complexity parameter $L_\lf$.
\begin{claim}\label{cl:L-bound}
Assuming that $f(V)\le 0$, we have
$|\low^-(V)|/{|V|}  \le \Lb{\lf}\le 2|\low^-(V)|$.
\end{claim}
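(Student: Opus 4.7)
My plan for proving Claim~\ref{cl:L-bound} is to handle the two inequalities separately, both leaning on Claim~\ref{cl:ell} together with the elementary identity $\|z\|_1 = z(V) - 2z^-(V)$ and the fact that $z(V) = \lf(V) = f(V)$ for every $z \in B(\lf)$ (the latter because $V\in\F$, so $\Down V = V$ and $\lf(V)=f(V)$).

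For the upper bound $\Lb{\lf} \le 2|\low^-(V)|$, I would fix an arbitrary $z \in B(\lf)$. Claim~\ref{cl:ell} gives $z(v) \ge \low^-(v)$ coordinatewise; since $\low^-(v)\le 0$, this implies $z^-(v) \ge \low^-(v)$ for every $v$ (case split on the sign of $z(v)$), and summing yields $z^-(V) \ge \low^-(V)$. Combined with the hypothesis $f(V) \le 0$,
\[
\|z\|_1 \;=\; f(V) - 2z^-(V) \;\le\; -2z^-(V) \;\le\; -2\low^-(V) \;=\; 2|\low^-(V)|.
\]
Taking the max over $z \in B(\lf)$ gives the claim.

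For the lower bound $\Lb{\lf} \ge |\low^-(V)|/n$, I plan to exhibit, for each $v \in V$, an extreme base $g^v$ of $B(\lf)$ obtained from \textsc{GreedyMin} on any ordering that places $v$ last. By \eqref{eq:vertex} this forces $g^v(v) = \lf(V) - \lf(V \setminus \{v\})$, which by Claim~\ref{cl:ell} equals either $\low^-(v)$ or $\low(v)$. A one-line case split on the sign of $\low(v)$ (using $\low^-(v)=\min\{0,\low(v)\}$) shows $|g^v(v)| \ge |\low^-(v)|$ in both cases. Using $\|g^v\|_1 \ge |g^v(v)|$ and then averaging over $v$,
\[
\max_{v \in V} \|g^v\|_1 \;\ge\; \frac{1}{n}\sum_{v \in V}|g^v(v)| \;\ge\; \frac{1}{n}\sum_{v \in V}|\low^-(v)| \;=\; \frac{|\low^-(V)|}{n},
\]
where the last equality uses that all $\low^-(v)$ are non-positive. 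Since every $g^v \in B(\lf)$, the bound on $\Lb{\lf}$ follows.

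I do not anticipate a real obstacle: the whole argument is a direct bookkeeping exercise on the inequality $z(v)\ge \low^-(v)$ supplied by Claim~\ref{cl:ell}. The only piece that requires a moment's care is the comparison $|\low(v)| \ge |\low^-(v)|$ used in the lower bound, which is immediate from the definition of $\low^-(v)$.
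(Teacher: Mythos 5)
Your proof is correct and follows the same route as the paper: both directions rest on Claim~\ref{cl:ell} together with the identity $\|z\|_1 = z(V) - 2z^-(V)$, and the lower bound is obtained from an extreme base whose defining order places some $v$ last. The only cosmetic difference is that for the lower bound the paper picks the single $v$ minimizing $\low^-(v)$ (so $|\low^-(v)|\ge |\low^-(V)|/|V|$ by pigeonhole), whereas you average $|g^v(v)|$ over all $v$; both give the same $|\low^-(V)|/|V|$ bound.
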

\begin{proof}
By Claim~\ref{cl:ell}, $y(v)\ge \low^-(v)$ for every $v\in V$ and
$y\in B(\lf)$,  therefore $y^-(V)\ge \low^-(V)$. Together with
$\lf(V)\le 0$, this implies
$\|y\|_1=\lf(V)-2y^-(V)\le -2\low^-(V)$.

For the lower bound, let us choose $v\in V$ with lowest value of $\low^-(v)$. Thus, $|\low^-(v)|\ge |\low^-(V)|/{|V|}$.
 Consider any extreme base $g$ of $B(\lf)$ from an order where $v$
 comes last. Then by the first part of Claim~\ref{cl:ell} $g(v)\in\{\low^-(v),\low(v)\}$, hence $|g(v)|\geq |\low^-(v)|$, which implies $\Lb{\lf}\ge \|g\|_1\ge
 |g(v)|\geq |\low^-(v)|\ge |\low^-(V)|/{|V|}$.
\end{proof}

\subsection{The basic strongly polynomial scheme}\label{sec:strong-and-simple}
\begin{figure*}[htb!]
\begin{center}
\begin{minipage}{0.85\textwidth}
\begin{algorithm}[H]
\raggedright
  \begin{algorithmic}[1]
    \Require{A submodular function $f:2^V\to \R$  with $f(V)\le 0$, and $\delta>0$.}
    \Ensure{An optimal solution to \eqref{prob:sfm}}.
    \State Initialize $F:=\emptyset$, $T:=\emptyset$.
    \While{$\low^-(V)<0$}\label{l:while}
\State Call \Call{Approx-SFM}{$\lf,\delta$} to obtain $W$ and $y\in
B(\lf)$, represented as a convex combination $y=\sum_{i=1}^k x_i g_i$.
\For{$z\in V$ such that $f(V\setminus \Up{z})> -|V|\cdot y^-(V\setminus \Up{z})$}\label{l:select-z}
\For{$i=1,\ldots,k$},
$g_i'\gets$\Call{Push}{$g_i,\lf,\Up{z}$}.\EndFor
\State $y':=\sum_{i=1}^k x_i g_i'$. \label{l:forward}
\For{$v\in V\setminus \Up{z}$ such that $y'(v)>-{y'}^-(V\setminus \Up{z})$}  \label{l:select-v}
\State add arc $(v,z)$ to $F$. \label{l:add-arc}
\EndFor
\EndFor
\For{$v\in V$ such that {$y(v)<2\low^-(V)\delta$}} \label{l:contract-1} \Comment{contraction}
\State Replace $f$ by $f(S\cup \Down{v})-f(\Down{v})$ on the ground set $V:=V\setminus
\Down{v}$.\label{l:contract-2}
\State Set $f(V):=\min\{0,f(V)\}$.
\State Set $T:= T\cup\Down{v}$.
\EndFor
\State Contract all strongly connected components of $F$ to single
nodes using \eqref{eq:contract}.\label{l:contract-c}

\State Replace $F$ by its transitive closure

\EndWhile
\Return {the pre-image of $T$ in the original ground set.}
 \end{algorithmic}
\caption{The basic strongly polynomial algorithm}\label{alg:strongly-1}
\end{algorithm}
\end{minipage}
\end{center}
\end{figure*}

Algorithm~\ref{alg:strongly-1} builds a ring family $\F$ represented by a
directed graph $F$ with the property that $\F$ contains all optimal
solutions to \eqref{prob:sfm}.

The subroutine {\sc
  Push}$(g,h,S)$ is defined for a submodular function $h:2^V\to \Z$,
an extreme basis $g\in B(h)$ given by an ordering of the elements of $V$, and a
subset $S\subseteq V$. It returns another extreme basis $g'\in B(g)$
by moving the set $S$ backward in the ordering defining $g$. That is,
every element of $V\setminus S$ will precede every element in $ S$, and
the relative ordering of the elements inside $V\setminus S$ as well as those
inside $S$ remain unchanged.

%; thus, minimizing $f$ is equivalent to
%minimizing the modified function $\lf$.
%We formulate the algorithm
%with a general value of $\delta$, and
In Theorem~\ref{thm:strongly-1}, we show that for $\delta=1/(3n^3)$, the algorithm terminates within
$n^2$ iterations. Let ${\AO}^{\downarrow}(f,\delta)$ denote the
maximum of $AO(\lf,\delta)$, where $\lf$ ranges over all possible
choices of ring families $\F$.
% containing all optimal solutions to \eqref{prob:sfm}.

Recall that Lemma~\ref{lem:lf} provides an upper bound
$O(n^2\cdot\EO+n^2)$ on the running time of
\textsc{Greedy-Min}$(\lf,w)$, in contrast to the running time bound
$O(n^2\cdot\EO+n\log n)$ on \textsc{Greedy-Min}$(f,w)$.
Thus, $\AO^\downarrow(f,\delta)$ can be upper
bounded by the worst case running time bound on $\AO(f,\delta)$ plus
$O(n/\log n)$ times the worst case bound on the number of calls to
the greedy algorithm in $\AO(f,\delta)$.

\begin{theorem}\label{thm:strongly-1}
Using $\delta=1/(3n^3)$,
 Algorithm~\ref{alg:strongly-1} finds the unique minimal optimal
  solution to \eqref{prob:sfm} in time $O(n^2 (\AO^\downarrow(f,1/(3n^3))+n^4
  \cdot\EO+n^5)$.
\end{theorem}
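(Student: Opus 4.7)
The argument splits into three parts: correctness, termination in $O(n^2)$ iterations, and per-iteration cost.

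For correctness, I would maintain the invariant that every minimizer of the original $f$ belongs (after applying the pre-image map) to the current ring family $\F$, and that $\lf$ is a valid representative of $f$ over $\F$ as in Lemma~\ref{lem:lf}. The contraction of $\Down{v}$ is justified by Lemma~\ref{lem:contract-element}: Claim~\ref{cl:L-bound} gives $\Lb{\lf}\le 2|\low^-(V)|$, so the threshold $y(v)<2\low^-(V)\delta$ implies $y(v)<-\delta\Lb{\lf}$, forcing $v$ (and hence $\Down{v}\in\F$) into every minimizer of $\lf$. Adding an arc $(v,z)$ is justified by Lemma~\ref{lem:extend-U} applied with $U=\Up{z}$ and the vector $y'\in B(\lf)$ obtained after moving $\Up{z}$ to the end of each ordering: the condition $y'(v)>-{y'}^-(V\sm\Up{z})$ forces any minimizer of $\lf$ containing $v$ to meet $\Up{z}$, which by down-closedness of $\F$ in the $\preceq_F$ order forces $z$ itself into the minimizer. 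Contracting strongly connected components is trivially valid. Finally, when the while loop terminates we have $\low^-(V)=0$, so by Claim~\ref{cl:L-bound} $\Lb{\lf}=0$, i.e.\ $B(\lf)=\{0\}$; Edmonds' min-max (Theorem~\ref{thm:edmonds}) then yields $\lf\ge 0$, so $\emptyset$ is optimal for the current $\lf$ and the pre-image $T$ of the contracted elements is optimal for the original $f$.

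For the iteration bound, the plan is to show that every iteration either reduces $|V|$ by at least one (via a contraction) or adds at least one new arc to $F$. Since $|V|$ decreases at most $n$ times and $|F|\le n^2$ at any stage, this yields the $O(n^2)$ bound on the number of executions of the while loop. The key quantitative step is: if no element $v$ satisfies $y(v)<2\low^-(V)\delta$, then using the \textsc{Approx-SFM} guarantee
\[
\lf(W)\le y^-(V)+\delta\Lb{\lf}\le y^-(V)+2\delta|\low^-(V)|,
\]
I would argue via a pigeonhole/averaging argument over the coordinates in $W$ and over candidate choices of $z$ that some $z$ meets the outer selection condition $f(V\sm\Up{z})>-|V|\cdot y^-(V\sm\Up{z})$ \emph{and}, after reordering, some $v\in V\sm\Up{z}$ satisfies $y'(v)>-{y'}^-(V\sm\Up{z})$, hence a new arc gets added. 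The choice $\delta=1/(3n^3)$ is dimensioned so that the slack $\delta\Lb{\lf}$ is dominated both by the factor $|V|$ in the selection condition and by the spread of $y^-$ across $O(n)$ coordinates.

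For the per-iteration cost, each pass makes one call to \textsc{Approx-SFM}$(\lf,1/(3n^3))$, costing $\AO^\downarrow(f,1/(3n^3))$. By Remark~\ref{rmk:caratheodory} the returned $y$ is a convex combination of $O(n)$ extreme bases. Using Lemma~\ref{lem:lf}, a single greedy call on $\lf$ costs $O(n\cdot\EO+n^2)$, so I would bound the total work spent on computing the reordered extreme bases $g_i'$ and testing the selection conditions by $O(n^2\cdot\EO+n^3)$ per iteration, exploiting that at most $n$ choices of $z$ and $O(n)$ bases $g_i$ must be processed, and sharing the $\lf$-greedy work across the inner loop. Multiplying by $O(n^2)$ iterations yields the announced bound $O(n^2\,\AO^\downarrow(f,1/(3n^3))+n^4\cdot\EO+n^5)$.

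The main obstacle is the termination argument: proving that a non-contracting iteration always produces a new arc. This requires combining the additive guarantee $\lf(W)\le y^-(V)+\delta\Lb{\lf}$ with the $\low^-$-based bound on $\Lb{\lf}$ and with the reordering trick that transforms $y$ into $y'$, in such a way that the failure of both the outer condition on $z$ \emph{and} the inner condition on $v$ for every pair $(z,v)$ would contradict the approximation quality when $\delta=1/(3n^3)$. The rest of the proof is bookkeeping by comparison.
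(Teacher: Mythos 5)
Your overall strategy matches the paper's: the heavy lifting is packaged in Lemma~\ref{lem:contract-kosher}, which justifies both the contraction and arc-addition steps and shows that for $\delta\le 1/(3n^3)$ every iteration makes discrete progress, giving $O(n^2)$ iterations and the stated running time once the $O(n^2\cdot\EO+n^3)$ per-iteration overhead from recomputing the $g_i'$'s is accounted for. Two remarks on the details, however.

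First, there is a genuine gap on correctness. You invoke Claim~\ref{cl:L-bound} in both the contraction test ($y(v)<2\low^-(V)\delta\Rightarrow y(v)<-\delta\Lb{\lf}$) and the termination argument, and that claim requires $f(V)\le 0$. The algorithm enforces this by \emph{truncating} $f(V):=\min\{0,f(V)\}$ after each contraction, which changes the function being minimized. Your plan treats the contracted function as if it were simply the contraction $f(S\cup\Down{v})-f(\Down{v})$, but after truncation it is not; one must argue the truncation is harmless. The paper's argument is that $T$ always consists of elements forced into \emph{every} minimizer, so the algorithm returns the unique minimal minimizer, and the truncation only lowers $f(V)$ to $f(\emptyset)$, which can never be the unique minimal minimizer---so if $V$ is returned, no truncation ever occurred. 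Without some such argument the correctness claim does not go through. Second, a smaller misdirection: your plan for the termination/progress argument says you would ``use the \textsc{Approx-SFM} guarantee $\lf(W)\le y^-(V)+\delta\Lb{\lf}$,'' but that guarantee plays no role in showing an arc gets added. What is actually used there is only that $y\in B(\lf)$ (hence $\lf(S)\ge y^-(V)$ by Edmonds), together with the non-contraction hypothesis $y(v)\ge 2\low^-(V)\delta$ for all $v$; one then picks $z$ with the most negative $\low(z)$, so $\low(z)\le\low^-(V)/|V|$, and chains the inequalities to meet the selection condition. The approximation guarantee is needed only for the soundness of the contraction step via Lemma~\ref{lem:contract-element}.
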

Using the bounds from Theorems~\ref{thm:cond-grad} and \ref{thm:wolfe},
we obtain $O(n^{10}\cdot \EO+n^{11})$ using the conditional gradient
algorithm, and $O(n^{11}\cdot \EO+n^{12})$ using the Fujishige-Wolfe
algorithm.
While these running times are high degree polynomials, we
emphasize that they can be obtained by repeated applications of simple
iterative methods, without using any form of scaling.

Theorem~\ref{thm:oracle-running
  time} gives a running time  $O((n^6 \cdot \EO+n^7)\log n)$ using the
\textsc{Rescaling-SFM} algorithm. In Section~\ref{sec:strong-fast}, we
give an enhanced version of the algorithm with running time  $O((n^5\cdot
\EO+n^6)\log^2 n)$.

\medskip

Let us now give an overview of
Algorithm~\ref{alg:strongly-1}. The ground set is reduced by
contracting elements that must be included in every optimal solution.  The set
$T$ represents the current set of contracted elements. Thus, the submodular function at the current stage will be
defined as $f(S\cup T)-f(T)$ for the original input function $f$, with the
possible exception of $f(V)$. Therefore, the complexity of evaluating the
current $f$ is still $\EO$.  We will use $n$ below to denote the size of the
original ground set $V$.

Once $\ell(v)\ge 0$ for
  all $v\in V$, we conclude that $S=\emptyset$ is the minimizer of the
  current function; we return $T$ as the unique minimal minimizer of the original function.
Otherwise, as long as $\ell^-(V)<0$, each
main iteration calls the oracle
\textsc{Approx-SFM}($\lf,\delta)$.
Two types of contractions are used.
All cycles in $F$ can be contracted to
single elements using the operation \eqref{eq:contract}, since an optimal solution can contain either all or
no element of a cycle (line~\ref{l:contract-c}). The other type of contraction (in
line~\ref{l:contract-2}) reduces the size of
the ground set by eliminating
elements that must be contained in every optimal solution.

The other main step of the algorithm is
adding new arcs to $F$.
The following lemma shows the validity of these
steps and that either of these operations should occur in every
iteration.

\begin{lemma}\label{lem:contract-kosher} The following properties are
  valid throughout Algorithm~\ref{alg:strongly-1}.
\begin{enumerate}[(i)]
\item All minimizers to \eqref{prob:sfm} are contained in the ring
  family $\cal F$.\label{part:ring-fam}
\item  If $\ell^-(V)=0$ then $f(Y)\ge 0$ for all $Y\subseteq V$. \label{part:nonneg}
\item Every arc $(v,z)$ added to
$F$ in line~\ref{l:add-arc} satisfies the property that every
minimizer that contains $v$ must also contain $z$. \label{part:arc}
\item Every $v\in V$ contracted in line~\ref{l:contract-2} must be contained
in all minimizers of \eqref{prob:sfm}.\label{part:contract}
\item If $\delta\le
1/(3n^3)$, then every iteration
either contracts an element or adds a new arc to $F$.\label{part:progress}
\end{enumerate}
\end{lemma}
\begin{proof}
We show all properties by induction on the number of iterations of the
algorithm. Part {\em \eqref{part:ring-fam}} is trivially true at the
beginning, since $\cal F=\emptyset$. Part \eqref{part:arc} guarantees
that it remains true whenever $F$ is extended (and thus $\cal F$ is
restricted to a smaller ring family).

\smallskip

For \emph{\eqref{part:nonneg}}, Claim~\ref{cl:ring-low} shows $f(Y)\ge
\ell(Y)\ge \ell^-(V)$ for any $Y\in {\cal F}$. Our assumption is $\ell^-(V)=0$, that
is $\ell\ge 0$, and thus $f(Y)\ge 0$ for all $Y\in {\cal F}$. By
{\em \eqref{part:ring-fam}}, it follows that $f(Y)\ge 0$ on every set
$S\subseteq V$.
For the rest of the proof, we
assume that $\ell^-(V)<0$.

\smallskip

For {\em\eqref{part:arc}},  Lemma~\ref{lem:extend-U} implies that for $v$
selected in line~\ref{l:select-v}, every minimizer of $\lf$ that
contains $v$, must also contain some element of $\Up{z}$. Using
{\em\eqref{part:ring-fam}}, and the definition of $\lf$, the minimizers of
$f$ are also minimizers of $\lf$, and are contained in $\cal F$. Thus,
if $S$ minimizes $f(S)$ and $S\cap \Up{z}\neq\emptyset$, then
$z\in S$. It follows that every minimizer of $f$ containing $v$ must also contain $z$, therefore the new arc $(v,z)$ is valid.

\smallskip

For {\em\eqref{part:contract}}, consider a $v$ such that
$y(v)<2\low^-(V)\delta$ as in
line~\ref{l:contract-2}. Lemma~\ref{lem:contract-element} and
Claim~\ref{cl:L-bound} imply that $v$ is contained in every minimizer
of $\lf$. By {\em \eqref{part:ring-fam}}, $\Down{v}$ must be contained in all elements of $\Down{v}$.

\smallskip

We now turn to the proof of {\em\eqref{part:progress}}. Note that, if the algorithm enters the while loop when $|V|=1$, say $V=\{v\}$, then $y(v)=f(V)=\ell(\{v\})=\ell^-(V)<2\ell^-(V)\delta$, so the algorithm contracts
$v$ in line~\ref{l:contract-2}, and subsequently terminates. Thus, we
can assume that $|V|\geq 2$. The argument is as follows: we note that
if no element is contracted in line~\ref{l:contract-2}, then the
condition in line~\ref{l:contract-1} yields
\begin{equation}\label{eq:y-bound}
y(v)\geq 2\low^-(V)\delta\quad \forall v\in V.
\end{equation}
From this property, we will derive that the condition in
line~\ref{l:select-z} is true for at least one $z\in V$. Finally, we
will show that in this case, the condition in line~\ref{l:select-v}
is true for some $v\in V\setminus \Up{z}$.

In more detail, if no element is contracted in line~\ref{l:contract-2}, then
from \eqref{eq:y-bound} we get the lower bound
\begin{equation}\label{eq:lb-y}
\lf(S)\geq y(S)\ge y^-(V)\geq 2{|V|}\cdot\low^-(V)\delta\quad \forall S\subseteq V.
\end{equation}
Since $\lf(S)=f(S)$ for $S\in\F$ by the definition of $\lf$, and $\F$
contains all minimizers of $f$ by \eqref{part:ring-fam}, we have that
\begin{equation}\label{eq:f-lb}
f(S)\ge 2{|V|}\cdot \low^-(V)\delta \quad \forall S\subseteq V.
\end{equation}

Next, we show that if \eqref{eq:lb-y}  holds, then at least one $z\in V$ satisfies
\begin{equation}\label{eq:z-req}
f(V\setminus \Up{z})> {|V|} \cdot |y^-(V)|,
\end{equation}
a bound which is slightly stronger than  the condition $f(V\setminus \Up{z})> -{|V|}\cdot y^-(V\setminus \Up{z})$ in
line~\ref{l:select-z}. %Hence, at least one new arc will be added to $F$.
Select $z\in V$ such that $\ell(z)$ is the most negative
possible. In particular, $\ell(z)\leq\ell^-(V)/{|V|}$. By \eqref{eq:f-lb},
we have
\begin{equation}\label{eq:U-z-lower}
\frac{\ell^-(V)}{|V|}\geq\low(z) = f((V\setminus \Up{z})\cup\{z\})-f(V\setminus
\Up{z})\ge 2{|V|}\cdot\low^-(V)\delta-f(V\setminus \Up{z}).
\end{equation}

Consequently,
\[
f(V\setminus \Up{z})\ge {|V|}\cdot |\ell^-(V)|\cdot\left(\frac{1}{{|V|}^2}-2\delta\right).
\]
From  the assumption $\delta\leq 1/(3n^3)\leq 1/(3|V|^3)$ we obtain $1/{|V|}^2-2\delta>
2{|V|}\delta$ since ${|V|}\ge 2$. Therefore \eqref{eq:z-req} follows since
\[
f(V\setminus \Up{z})> 2 |V|^2 \cdot |\ell^-(V)|\delta \ge {|V|}\cdot |y^-(V)|.
\]
In the final inequality we used \eqref{eq:lb-y} (recalling that $\ell^-(V)$ and $y^-(V)0$ are non-positive).

It is left to show that if $z\in V$ satisfies the condition in
line~\ref{l:select-z}, then at least one  $v\in V\setminus \Up{z}$
will be selected in line~\ref{l:select-v}.
By construction and from the fact that $V\setminus \Up{z}\in {\cal F}$, for every $i\in [k]$ we have $g'_i(V\setminus \Up{z}) =\lf(V\setminus \Up{z})=f(V\setminus \Up{z})$, and $g'_{i}(u)\ge g_{i}(u)$ for every $u\in V\setminus \Up{z}$. It follows that $y'(V\setminus \Up{z})=f(V\setminus \Up{z})$ and $y'(u)\ge y(u)$ for all $u\in V\setminus \Up{z}$.

Assume that $f(V\setminus \Up{z})> -{|V|}\cdot y^-(V\setminus \Up{z})$, as in the
condition in line~\ref{l:select-z}.
It follows that
\begin{eqnarray*}
y'(V\setminus \Up{z})=f(V\setminus \Up{z}) > -{|V|}\cdot y^-(V\setminus \Up{z})\ge -{|V|}\cdot {y'}^-(V\setminus  \Up{z}).
\end{eqnarray*}
This in turn implies the existence of  $v\in V\setminus \Up{z}$ such that $y'(v)>- {y'}^-(V\setminus  \Up{z})$
in line~\ref{l:select-v}.

\end{proof}

\begin{proof}[Proof of Theorem~\ref{thm:strongly-1}]
Lemma~\ref{lem:contract-kosher} justifies the contraction steps and
the addition of new arcs to $F$, and shows that the number of main
iterations is at most $n^2$. Let us also note that after every
contraction, we decrease the value of $f(V)$ if it becomes positive
(that is, if $f(V)>f(\Up{v})$ before the contraction of $v$). This
operation clearly maintains submodularity. It is also safe in the
sense that it may not lead to an incorrect output with respect to the
original function. Indeed, note that at termination the algorithm
returns the current set of $T$, which are elements that must be
contained in every minimizer of the original function. Hence, the
algorithm outputs the unique minimal solution to \eqref{prob:sfm}. On
the other hand, if $f(V)$ was ever decreased, then we decrease it to the
same value as $f(\emptyset)$. Therefore it can never become the unique
minimizer. If the algorithm terminates with the entire ground set $V$,
then it follows that $f(V)$ was never decreased during the algorithm.

Let us now estimate the running time. Besides the calls to
\textsc{Approx-SFM}, the running time is dominated by the operations \textsc{Push}$(g_i',\lf,\Up{z})$, which altogether require $O(n^2\cdot \EO
+ n^3)$ for every iteration, and this is required $O(n^2)$ times.
Every time an arc $(u,v)$ is added to $F$, recomputing the transitive
closure requires to add arcs from $u$ and all its predecessors to $v$ and
all its descendants. This requires $O(n^2)$ operations per added arc, so
it requires $O(n^4)$ operations overall, which is within the stated running time bound.
\end{proof}

\subsection{Speeding up the algorithm}\label{sec:strong-fast}
The algorithm described in the previous section needs to identify
$O(n^2)$ arcs in $F$. In the worst case, each iteration may only
identify a single arc, resulting in $O(n^2)$ calls
to {\sc Approx-SFM}.

On the other hand, if  we were able to guarantee that $|\low^-(z)|$ is within a  factor
$O(n^\bk)$ from $|\low^-(V)|$ for a constant fraction of all $z\in V$
for some constant $\bk\ge 1$, the analysis in the proof of Lemma~\ref{lem:contract-kosher} implies that for $\delta=1/O(n^{\bk+2})$ we would guarantee $f(V\sm \Up{z})\geq -n y^-(V\sm \Up{z})$ for all such $z\in V$.
Thus, after running  {\sc Approx-SFM}$(\lf, 1/O(n^{\bk+2}))$, we could
extend $F$ by $\Theta(n)$ new arcs.

If this property held in all iterations, then $O(n)$ calls to
{\sc Approx-SFM} would suffice. However,
the number of $z\in V$ with $|\low^-(z)|$ value ``close'' to $|\low^-(V)|$
can be $o(n)$. To deal with this situation, we apply the \emph{bucketing}
technique of Lee, Sidford, and Wong \cite{LSW}. Instead of the entire
$V$, we restrict our function in every iteration to a suitably chosen $\bar V\subseteq V$, and run
{\sc Approx-SFM} restricted to this set with $\delta=n^{-O(\log n)}$. We will obtain $\Theta(|\bar V|)$ new
arcs in this iteration. Thus, if {\sc Approx-SFM} has running time
$O((|\bar V|^4\cdot\EO+|\bar V|^5)\log^2 n)$, then the amortized cost of
extending $F$ by an arc will be $O((n^3\cdot\EO+n^4)\log^2 n)$.

We note that this improvement is only applicable if
$\AO(f,\delta)$ depends logarithmically on $1/\delta$. Since $\delta$
can be quasi-polynomial, the conditional gradient or Fujishige-Wolfe
methods would not even be polynomial in this framework.

Another speedup technique, also adapted from \cite{LSW}, enables to
save on the running time of recomputing the extreme bases in
line~\ref{l:forward} of Algorithm~\ref{alg:strongly-1}. We can identify
the new arc $(z,v)$ by recomputing only one of the $g_i'$s instead of
all of them, at the expense of requiring a higher accuracy
$2\k\delta$ instead of $\delta$.  Here, $\k$ denotes an upper bound on
the number of extreme bases in the convex combination, hence $\k\in O(n)$ by assumption.

The following lemma adapts the argument in
Section 15.4.1 in \cite{LSW}.

\begin{lemma}\label{lem:bucketing} Let  $f:2^V\to\mathbb{Z}$ be a submodular function,
$\cal F$  a ring family containing all minimizers of $f$, and  $\lf:2^V\to\mathbb{Z}$ be the corresponding function defined by $f$ and $\cal F$.
Then in $O(n \cdot \EO)$ time we can find a nonempty subset $\bar V\subseteq V$ and a positive integer $\bk=O(\log n)$,
such that
\begin{itemize}
\item For every $z\in V\setminus \bar V$, we have $\low(z)>
  2\low^-(V)/(4\kappa n)^{4\bk}$.
\item There exist at least $|\bar V|/2$ distinct $z\in \bar V$ such
  that $\low(z)\le 2\low^-(V)/(4\kappa n)^{4\bk-4}$.
\end{itemize}
\end{lemma}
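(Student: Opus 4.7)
The plan is to sort the elements of $V$ by $|\low^-(\cdot)|$ and take $\bar V$ to be the set of elements whose value exceeds a threshold $2M/\alpha^\bk$, where $M\eqdef |\low^-(V)|>0$ (if $M=0$ the calling while-loop has already terminated), $\alpha\eqdef (4\k\n)^4$, and the integer $\bk$ is pinned down by a bucketing pigeonhole argument. For $i\ge 0$ define the cumulative counts
\[
N_i \eqdef |\{z\in V \st |\low^-(z)|\ge 2M/\alpha^i\}|,
\]
so $N_0\le N_1\le\cdots\le \n$ and $N_0=0$ (since $|\low^-(z)|\le M$ for every $z$). Taking $\bar V \eqdef \{z\st |\low^-(z)|\ge 2M/\alpha^\bk\}$ makes the first bullet of the lemma immediate, and reduces the second bullet to the inequality $N_{\bk-1}\ge N_\bk/2$, because every $z$ reaching the deeper threshold $2M/\alpha^{\bk-1}$ automatically lies in $\bar V$.

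The core step is exhibiting $\bk = O(\log \n)$ with both $N_\bk\ge 1$ and $N_\bk\le 2N_{\bk-1}$. First I would show $N_1\ge 1$: pigeonhole gives some $z$ with $|\low^-(z)|\ge M/\n$, and $M/\n \ge 2M/\alpha$ because $\alpha=(4\k\n)^4\ge 2\n$. Next, the doubling pigeonhole: if $N_{i+1}>2N_i$ for every $i\ge 1$, a straightforward induction gives $N_i>2^{i-1}N_1\ge 2^{i-1}$, which contradicts $N_i\le \n$ once $i>1+\log_2\n$. Hence there is a smallest $\bk\le 1+\lceil\log_2\n\rceil$ with $N_\bk\le 2N_{\bk-1}$; since this forces $\bk\ge 2$ (the case $\bk=1$ would require $N_1\le 2N_0=0$), monotonicity plus $N_1\ge 1$ give $N_\bk\ge 1$, so $\bar V$ is nonempty.

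For the running time, each $\low(z)$ costs two queries to $f$ plus an $O(\n)$ lookup of $\Up{z}$ in the transitive digraph $F$, so all values $\low(z)$ can be tabulated in $O(\n\cdot\EO)$ time (using $\EO=\Omega(\n)$ just to read a query set). A single linear pass or one sort then produces the $O(\log\n)$ counts $N_1,\ldots,N_{O(\log\n)}$ and identifies the chosen $\bk$ within the same $O(\n\cdot\EO)$ budget.

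The main subtlety I expect is avoiding the degenerate outcome $N_\bk=0$, which would satisfy the doubling pigeonhole vacuously but leave $\bar V$ empty. Forcing $N_1\ge 1$ at the outset, which in turn exploits the generous gap $\alpha=(4\k\n)^4\gg \n$ baked into the lemma's constants, is exactly what bootstraps the doubling argument and simultaneously caps $\bk$ at $O(\log\n)$.
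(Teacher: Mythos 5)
Your proof is correct and follows essentially the same approach as the paper's: define the nested buckets $V^t=\{z:\low(z)\le 2\low^-(V)/(4\kappa n)^{4t}\}$ (your $N_t=|V^t|$), note $V^1\neq\emptyset$ by averaging, and pick the smallest $\bk$ with $|V^\bk|\le 2|V^{\bk-1}|$, which must be $O(\log n)$ by a doubling argument. The only differences are cosmetic: you phrase things via $|\low^-(\cdot)|$ rather than $\low(\cdot)$, and you usefully make explicit the convention $N_0=0$ (which the paper leaves implicit) together with the observation that this forces $\bk\ge 2$, so $\bar V\supseteq V^1\neq\emptyset$.
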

\begin{proof}
Let us define
$
V^t\eqdef\{z\in V: \low(z)\le 2\low^-(V)/(4\kappa  n)^{4t})\}$ for  $t=1,2,\ldots
$. Note that $V^1\subseteq V^2\subseteq \ldots$, because $\ell^-(V)\le 0$. Furthermore, $V^1\neq\emptyset$, as it contains $z$ with the smallest
$\low(z)$ value. Let $\bk$ be the smallest value such that $|V^\bk|\le
2|V^{\bk-1}|$. Thus, $\bk=O(\log  n)$, and choosing $\bar V=V^b$ satisfies
both requirements.
\end{proof}

For the set $\bar V$ and value $\bk$ as in the lemma, let $\bar f:2^{\bar
V}\to \mathbb{Z}$ denote the restriction of $\lf$ to the ground set
$\bar V$, and let us set
\begin{equation}\label{eq:barepsilon}
\bar\delta\eqdef\frac{1}{(4 \k  n)^{4\bk}},\quad \delta\eqdef
\frac{2 n^2+1}{(4\k  n)^{4\bk}},
\end{equation}

Let us call
{\sc Approx-SFM}$(\bar f,\bar\delta)$ to obtain the vector $\bar
y\in B(\bar f)$ defined as a convex combination of extreme bases $\bar
g_1,\ldots,\bar g_k\in B(\bar f)$ with $k\le\k$, and a set $W\subseteq \bar V$ such that $\bar f(W)\le \bar
y^-(\bar V)+\bar\delta \Lb{\bar f}$.

Let us now extend $\bar y\in\R^{\bar V}$ to $y\in\R^V$ as follows. For $v\in \bar V$, we let $y(v)=\bar
y(v)$. Then, consider an arbitrary order $v_1,\ldots,v_{ n-|\bar V|}$ of
$V\setminus \bar V$, and set $y({v_j}):=\Down{f}(\bar V\cup\{v_1,\ldots,v_{j}\})-\Down{f}(\bar V\cup\{v_1,\ldots,v_{j-1}\})$. Let us also define $g_1,\ldots,g_k\in \R^V$, by
$g_i(v)=\bar g_{i}(v)$ for $v\in \bar V$, $g_i(v)=y(v)$ for $v\in V\sm \bar V$ ($i=1,\ldots,k$). Note that, by definition, $g_1,\ldots,g_k$ are extreme bases of $B(\lf)$, and $y$ is a convex combination of $g_1,\ldots,g_k$.

\begin{lemma}
For the vector $y$ and set $W$ as above, we have that $y\in B(\lf)$, and  $\lf(W)\le
y^-(V)+\delta\Lb{\lf}$.
\end{lemma}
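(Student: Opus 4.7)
The plan has two parts: showing $y\in B(\lf)$, and establishing the approximation inequality.

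For membership, I would first observe that each $\bar g_i$ is an extreme base of $B(\bar f)$ arising from some ordering $u_1,\ldots,u_{|\bar V|}$ of $\bar V$. Since $\bar f$ is the restriction of $\lf$ to $2^{\bar V}$, the extreme base of $B(\lf)$ obtained from the concatenated ordering $(u_1,\ldots,u_{|\bar V|},v_1,\ldots,v_{n-|\bar V|})$ agrees with $\bar g_i$ on $\bar V$, and on $V\setminus\bar V$ its $v_j$-entry is exactly $\lf(\bar V\cup\{v_1,\ldots,v_j\})-\lf(\bar V\cup\{v_1,\ldots,v_{j-1}\})=y(v_j)$ by construction. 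Hence each $g_i$ is an extreme base of $B(\lf)$, and $y=\sum_i x_i g_i$ is a convex combination, so $y\in B(\lf)$. As a byproduct of this extension argument, $\Lb{\bar f}\le\Lb{\lf}$, since any element of $B(\bar f)$ can be extended to an element of $B(\lf)$ without decreasing the $1$-norm contribution from $\bar V$.

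Next I would reduce the inequality to a tail estimate on $V\setminus\bar V$. The oracle guarantees $\bar f(W)\le\bar y^-(\bar V)+\bar\delta\Lb{\bar f}$; since $W\subseteq\bar V$ we have $\lf(W)=\bar f(W)$, and since $y=\bar y$ on $\bar V$ we have $\bar y^-(\bar V)=y^-(V)-y^-(V\setminus\bar V)$. Therefore the claim $\lf(W)\le y^-(V)+\delta\Lb{\lf}$ is equivalent to
\[
-y^-(V\setminus\bar V)+\bar\delta\Lb{\bar f}\le\delta\Lb{\lf}=(2n^2+1)\bar\delta\Lb{\lf}.
\]
Using $\Lb{\bar f}\le\Lb{\lf}$, it suffices to show $-y^-(V\setminus\bar V)\le 2n^2\bar\delta\Lb{\lf}$.

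For this tail bound, I would first show the pointwise lower bound $y(v)\ge\ell^-(v)$ for every $v\in V$. Indeed, for any extreme base $g$ of $B(\lf)$ and any $v\in V$, submodularity of $\lf$ gives $g(v)\ge\lf(V)-\lf(V\setminus\{v\})$, and Claim~\ref{cl:ell} then yields $\lf(V)-\lf(V\setminus\{v\})\ge\ell^-(v)$; averaging over the convex combination $y=\sum_i x_i g_i$ produces $y(v)\ge\ell^-(v)$, hence $y^-(v)\ge\ell^-(v)$. For $v\in V\setminus\bar V$, Lemma~\ref{lem:bucketing} gives $\ell(v)>2\ell^-(V)/(4\k n)^{4b}=2\bar\delta\ell^-(V)$, and because $\ell^-(V)<0$ this forces $\ell^-(v)\ge 2\bar\delta\ell^-(V)$. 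Summing over $V\setminus\bar V$ yields
\[
y^-(V\setminus\bar V)\ge 2(n-|\bar V|)\bar\delta\,\ell^-(V)\ge 2n\bar\delta\,\ell^-(V),
\]
i.e., $-y^-(V\setminus\bar V)\le 2n\bar\delta|\ell^-(V)|$. Applying Claim~\ref{cl:L-bound}, which gives $|\ell^-(V)|\le n\Lb{\lf}$, concludes that $-y^-(V\setminus\bar V)\le 2n^2\bar\delta\Lb{\lf}$, as required.

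The main delicate point is the pointwise bound $y(v)\ge\ell^-(v)$ on $V\setminus\bar V$: it is essential that this holds \emph{uniformly} over all extreme bases of $B(\lf)$, which is why I route through submodularity plus Claim~\ref{cl:ell} rather than relying on the particular construction of the $g_i$. Everything else is bookkeeping: matching the factor $2n^2+1$ defining $\delta$ with the contributions $2n^2\bar\delta\Lb{\lf}$ from the tail and $\bar\delta\Lb{\lf}\ge\bar\delta\Lb{\bar f}$ from the oracle error.
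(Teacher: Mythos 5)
Your proof is correct and follows essentially the same route as the paper: split $y^-(V)$ into the $\bar V$-part handled by the oracle guarantee and the tail $y^-(V\setminus\bar V)$, bound the tail pointwise via $y(v)\ge\low^-(v)$ and the bucketing property of $\bar V$, and assemble the pieces using $\Lb{\bar f}\le\Lb{\lf}$ and Claim~\ref{cl:L-bound}. The only minor difference is that you establish the pointwise bound $y(v)\ge\low^-(v)$ through the general fact that every extreme base of $B(\lf)$ satisfies $g(v)\ge\lf(V)-\lf(V\setminus\{v\})$ (which is exactly the content of the last part of Claim~\ref{cl:ell}), whereas the paper derives it more directly from the explicit formula $y(v_j)=\lf(\bar V\cup\{v_1,\ldots,v_j\})-\lf(\bar V\cup\{v_1,\ldots,v_{j-1}\})$; both are one-line applications of submodularity and lead to the same conclusion.
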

\begin{proof}
By definition
$\lf(W)=\bar f(W)$ and $\Lb{\bar f}\le \Lb{\lf}$, because $\bar
f$ is a restriction of $\lf$. Therefore,
\[
\lf(W)\le \bar y^-(\bar V)+ \bar\delta \Lb{\lf}.
\]
\begin{claim}
$y(v)\ge \low^-(v)$ for every $v\in V\setminus \bar
V$.
\end{claim}
\begin{proof}
If $v=v_j$, then  $y(v)=\Down{f}(\bar
V\cup\{v_1,\ldots,v_{j}\})-\Down{f}(\bar
V\cup\{v_1,\ldots,v_{j-1}\})\ge \lf(V)-\lf(V\setminus \{v\})$ by
submodularity.
Further, $\lf(V)-\lf(V\setminus \{v\})\ge \low^-(v)$ by Claim~\ref{cl:ell}.
\end{proof}
We have $y^-(V)=\bar y^-(\bar V)+y^-(V\setminus
\bar V)$. By the choice of $\bar V$,
we have
 \[
\low^-(v)\ge 2\low^-(V)/(4\kappa  n)^{4b}=2\ell^-(V)\bar\delta\quad
\forall v\in
V\setminus \bar V.
\]
 Using the claim above,
we get $y^-(V\setminus \bar V)\ge
2 n \ell^-(V)\bar\delta$. Thus, $y^-(V)\ge \bar y^-(\bar
V)+2 n \ell^-(V)\bar\delta\ge \bar y^-(\bar
V)- 2 n^2\bar\delta\Lb{\lf} $.  Here, the last inequality used the
lower bound in
Claim~\ref{cl:L-bound}.
Consequently,
\[
\lf(W)\le y^-(V)+(2 n^2+1)\bar\delta \Lb{\lf}=y^-(V)+\delta \Lb{\lf}.
\]
\end{proof}
This proof shows that we can implement {\sc Approx-SFM}$(f,\delta)$
by calling {\sc Approx-SFM}$(\bar
f,\bar\delta)$, and adding the remaining $V\setminus \bar V$
elements by $O( n)$ value oracle queries for $\lf$, and $O(n\k)$ arithmetic
operations.  This  gives a running time overhead
$O( n\cdot\EO+ n^2)$.

We make two modifications to Algorithm~\ref{alg:strongly-1} as
follows. Firstly, in every
iteration, we compute $\bar V$ and $\bk$ as in
Lemma~\ref{lem:bucketing}, and use this modified implementation of
{\sc Approx-SFM}
with
$\delta$ as defined in \eqref{eq:barepsilon}.

Secondly, we modify the selection of $z$ as in line~\ref{l:select-z}
to
\begin{equation}\label{eq:half-eq}
f(V\sm \Up{z})> -2k|V|\cdot y^-(V\sm \Up{z}).
\end{equation}
In line~\ref{l:forward}, we
change the computation of $y'$ as
follows. We first compute $x_i ( f(V\sm\Up{z}) -g_i(V\sm\Up{z}))$ for
all $i\in [k]$; w.l.o.g., assume the maximum is taken for $i=1$. We
compute $g_1'\gets$\Call{Push}{$g_1,\lf,\Up{z}$} and define
\[
y':=y+x_1(g_1'-g_1).
\]
The rest of the algorithm remains unchanged.

\begin{theorem}\label{thm:strongly-2}
The above described modification of Algorithm~\ref{alg:strongly-1}
finds an optimal solution to \eqref{prob:sfm} in time
$O( n \cdot\AO^\downarrow(f, n^{-O(\log  n)})+ n^3 \cdot
  \EO+ n^4 )$. Using the implementation with
\textsc{Rescaling-SFM}, the running time is $O(( n^5\cdot \EO+ n^6)\log^2  n)$.
\end{theorem}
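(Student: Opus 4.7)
The plan is to verify correctness of the two modifications (bucketing and single-base recomputation), to show that each iteration either contracts an element or adds $\Theta(|\bar V|)$ new arcs to $F$, and then to amortize the running time over the total number of arcs ever added.

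For correctness, the contraction branch is unchanged and still justified by Lemma~\ref{lem:contract-element} together with Claim~\ref{cl:L-bound}, since the extended vector $y\in B(\lf)$ satisfies $\lf(W)\le y^-(V)+\delta\Lb{\lf}$ by the lemma preceding the theorem. The delicate point is that the modified $y'=y+x_i(g_i'-g_i)$ uses only a single recomputed extreme base, where $i$ maximizes $x_i\bigl(f(V\sm\Up{z})-g_i(V\sm\Up{z})\bigr)$. Since $\sum_i x_i=1$, this maximum is at least $f(V\sm\Up{z})-y(V\sm\Up{z})$, so $y'(V\sm\Up{z})\ge f(V\sm\Up{z})$ and $y'(u)\ge y(u)$ for $u\in V\sm\Up{z}$ by submodularity applied to the reorder. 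If~\eqref{eq:half-eq} holds, the averaging argument used at the end of the proof of Lemma~\ref{lem:contract-kosher} produces a $v\in V\sm\Up{z}$ with $y'(v)>-{y'}^-(V\sm\Up{z})$, and Lemma~\ref{lem:extend-U} certifies the new arc $(v,z)$. The factor $2\kappa$ inside~\eqref{eq:half-eq} absorbs the loss $1/x_i\le\kappa$ incurred by updating only one base.

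For progress, Lemma~\ref{lem:bucketing} supplies at least $|\bar V|/2$ indices $z\in\bar V$ with $\low(z)\le 2\low^-(V)/(4\kappa n)^{4b-4}$. Assuming no contraction is triggered, the non-contraction branch of Lemma~\ref{lem:contract-kosher} gives $y^-(V)\ge 2|V|\low^-(V)\delta$; substituting $\delta=(2n^2+1)/(4\kappa n)^{4b}$ from~\eqref{eq:barepsilon} together with the bound on $\low(z)$ into the computation~\eqref{eq:U-z-lower} yields $f(V\sm\Up{z})>-2\kappa|V|\cdot y^-(V\sm\Up{z})$, which confirms~\eqref{eq:half-eq}. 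Each of these $|\bar V|/2$ vertices therefore triggers a new arc, so every iteration either contracts an element (at most $n$ times overall) or adds $\Theta(|\bar V|)$ arcs to $F$.

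For the running time, at most $n^2$ arcs ever enter $F$, so summing over arc-adding iterations gives $\sum_i |\bar V_i|=O(n^2)$; combined with $|\bar V_i|\le n$ this yields $\sum_i |\bar V_i|^c=O(n^{c+1})$ for any $c\ge 1$. Each iteration invokes \textsc{Approx-SFM} on $\bar f$ at precision $\bar\delta=n^{-O(\log n)}$; using the Rescaling-SFM implementation of Theorem~\ref{thm:oracle-running time}, this costs $O((|\bar V|^4\EO+|\bar V|^5)\log^2 n)$, and summing gives $O((n^5\EO+n^6)\log^2 n)$. The auxiliary bookkeeping (extending $y$ from $\bar V$ to $V$, the single $g_i'$ update, and maintaining transitivity and SCC contractions of $F$) adds $O(n^3\EO+n^4)$ overall, yielding both stated bounds. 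The main obstacle is verifying that the calibration~\eqref{eq:barepsilon} simultaneously compensates for the bucketing gap from Lemma~\ref{lem:bucketing}, the single-base loss $\kappa$, and the ring-family inflation $\Lb{\lf}\le 2|\low^-(V)|$ from Claim~\ref{cl:L-bound}, without eroding the $|\bar V|/2$ guarantee on arc additions.
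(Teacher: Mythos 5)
Your overall plan — bucketing from Lemma~\ref{lem:bucketing}, calibrating $\bar\delta$ so that at least $|\bar V|/2$ vertices $z$ satisfy \eqref{eq:half-eq}, and amortizing $\sum_i|\bar V_i|^c=O(n^{c+1})$ over the $O(n^2)$ arcs — matches the paper's proof, and your progress bound $f(V\sm\Up z)>-2\kappa|V|\cdot y^-(V\sm\Up z)$ is derived correctly. However, the step justifying the single-base update contains a concrete error that would not survive verification.

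You assert that, because $\sum_i x_i=1$, the maximum of $x_i\bigl(f(V\sm\Up z)-g_i(V\sm\Up z)\bigr)$ is at least $f(V\sm\Up z)-y(V\sm\Up z)$, and hence $y'(V\sm\Up z)\ge f(V\sm\Up z)$. This is false: each term $x_i\bigl(f(V\sm\Up z)-g_i(V\sm\Up z)\bigr)$ is nonnegative (since $g_i(V\sm\Up z)\le \lf(V\sm\Up z)=f(V\sm\Up z)$ for $V\sm\Up z\in\F$), so their maximum is only guaranteed to exceed their \emph{average}, i.e.\ $\frac1k\bigl(f(V\sm\Up z)-y(V\sm\Up z)\bigr)$, not their sum. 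Consequently the only guarantee on $y'$ from recomputing a single $g_1'$ is
\[
y'(V\sm\Up z)\;=\;y(V\sm\Up z)+x_1\bigl(f(V\sm\Up z)-g_1(V\sm\Up z)\bigr)\;\ge\;\tfrac{k-1}{k}\,y(V\sm\Up z)+\tfrac1k\,f(V\sm\Up z),
\]
which is strictly weaker than $f(V\sm\Up z)$ whenever $k>1$. If $y'(V\sm\Up z)\ge f(V\sm\Up z)$ actually held, the threshold in \eqref{eq:half-eq} could remain $-|V|\cdot y^-(V\sm\Up z)$ as in Lemma~\ref{lem:contract-kosher}, and the factor $2k$ would be superfluous — a sign that the bound as you wrote it cannot be right. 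Your remark that ``the factor $2\kappa$ absorbs the loss $1/x_i\le\kappa$'' gestures at the real mechanism but is both imprecise (the loss is $k$, not $1/x_i$, and $1/x_i\le\kappa$ does not hold in general) and logically unused: your explicit chain never invokes the stronger threshold of \eqref{eq:half-eq}, since the (incorrect) $y'(V\sm\Up z)\ge f(V\sm\Up z)$ already suffices with the weaker one. The correct finish combines the $\tfrac1k$-weighted bound above with \eqref{eq:half-eq} and $y(V\sm\Up z)\ge y^-(V\sm\Up z)$ to get $y'(V\sm\Up z)>-|V|\cdot y^-(V\sm\Up z)\ge -|V|\cdot {y'}^-(V\sm\Up z)$, which is exactly where the extra factor $2k$ is consumed.
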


\begin{proof}
Note that properties (\ref{part:ring-fam})-(\ref{part:contract}) of Lemma~\ref{lem:contract-kosher} are maintained throughout the algorithm, just as before.
We need to prove the following analogue of Lemma~\ref{lem:contract-kosher}(\ref{part:progress}).
\begin{claim}\label{cl:half}
After every call of the approximation oracle, either at least one node
is contracted at line~\ref{l:contract-2}, or at least
$\frac12|\bar V|$ new arcs are added to $F$.
\end{claim}
\begin{proof}
We will first show that, if no node is contracted  at
line~\ref{l:contract-2}, then \eqref{eq:half-eq}
holds for at least half of the elements $z$ of
$\bar V$. Subsequently, we will show that, for every $z$ satisfying \eqref{eq:half-eq},
there exists $v\in V\sm \Up{z}$ satisfying the condition in line \ref{l:select-v}.

If no node is contracted  at line~\ref{l:contract-2}, then as in the proof of Lemma~\ref{lem:contract-kosher}(\ref{part:progress}),
we can assume that $y(v)\ge 2\low^-(V)\delta$ for all $v\in V$;
further, \eqref{eq:lb-y} and \eqref{eq:f-lb} hold.
By Lemma~\ref{lem:bucketing} and our choice of $\bar V$,  at least half of
the elements $z$ of $\bar V$ satisfy
\[\low(z)\le
2\low^-(V)/(4\k n)^{4\bk-4}=2\low^-(V)\delta (4\k n)^4/(2n^2+1).\]
Consider any such $z$.
As in \eqref{eq:U-z-lower}, the assumption \eqref{eq:f-lb}
implies that
$$f(V\setminus \Up{z})\geq 2|V|\cdot \low^-(V)\delta-\ell(z)
\geq
2|V|^2\cdot |\low^-(V)|\delta\left(\frac{(4\k
     n)^4}{|V|^2(2n^2+1)}-\frac{1}{|V|}\right).$$
Using \eqref{eq:lb-y},  $2|V|\cdot|\low^-(V)|\delta\ge |y^-(V)|$, and
it is easy to see that the expression in the brackets is $>2\kappa\ge
2k$ for $n\ge 2$. Thus,
\[
f(V\setminus \Up{z})\ge 2k|V|\cdot|y^-(V)|,
\]
implying \eqref{eq:half-eq}.

Let us now show that for any $z$ satisfying \eqref{eq:half-eq}, there
exists a $v\in V\setminus \Up{z}$ such that $y'(v)>-{y'}^-(V\setminus
z)$.  Recall our assumption that $i=1$
maximizes $x_i ( f(V\sm\Up{z}) -g_i(V\sm\Up{z}))$, and therefore
\[
x_1 ( f(V\sm\Up{z}) -g_1(V\sm\Up{z}))\ge \frac{1}{k} \left(f(V\sm\Up{z})
-y(V\sm\Up{z})\right).
\]
Also note that $g_1'$ is defined so that $g_1'(V\setminus \Up{z})=\lf
(V\setminus \Up{z})=f(V\setminus \Up{z})$. We get

\[
\begin{aligned}
y'(V\setminus \Up{z})&=y(V\setminus \Up{z})+ x_1 ( g_1'(V\sm\Up{z})
  -g_1(V\sm\Up{z})) \\
&=y(V\setminus \Up{z})+ x_1 ( f(V\sm\Up{z})
  -g_1(V\sm\Up{z}))\\
& \ge\frac{k-1}{k} y(V\setminus \Up{z}) +
  \frac{1}{k} f(V\sm\Up{z})\\
\end{aligned}
\]
Using \eqref{eq:half-eq}, we obtain
\[
y'(V\setminus \Up{z}) > -{|V|}\cdot y^-(V\setminus \Up{z})\ge -{|V|}\cdot {y'}^-(V\setminus  \Up{z}),
\]
guaranteeing the existence of  $v\in V\setminus \Up{z}$ such that $y'(v)>- {y'}^-(V\setminus  \Up{z})$
in line~\ref{l:select-v}.
\end{proof}

The running time of
\textsc{Approx-SFM}$(\bar f,\bar \delta)$ is
$\AO(\bar f,\bar\delta)$. There are at most $n$ iterations where a
node gets contracted; the total cost of the oracle calls in these
iterations can be bounded by $n\AO^\downarrow(f,\bar\delta)$.

Consider now the iterations when no nodes get contracted.
In these iterations, the amortized cost of an oracle call per new arc is
$2\AO(\bar f,\bar\delta)/|\bar V|$. Since $\AO$ depends at least linearly on $|\bar V|$, this can be upper
bounded by $\AO^\downarrow(f,\bar\delta)/|V|$.
Hence, the total time of the oracle calls is $O( n\cdot
\AO(f,\bar\delta))$.

After every call, there is an overhead $O(n\cdot \EO+n^2)$,
totalling $O(n^3\cdot \EO+n^4)$. For every arc
identified, it takes $O(\EO+\kappa)$ to identify which $g'_i$
needs to be computed, and it takes $O( n\cdot\EO+n^2)$ time to compute
this $g_i'$; this takes $O(n^3 \cdot \EO+n^4)$ time overall. As in the proof
of Theorem~\ref{thm:strongly-1}, recomputing the transitive closures requires $O(n^4)$ operations
over the entire execution of the algorithm.

Finally, if we consider \textsc{Approx-SFM} provided by algorithm \textsc{Rescaling-SFM} as in  Theorem~\ref{thm:oracle-running
  time}, we obtain a running time bound
$O(( n^5\cdot \EO+ n^6)\log^2 n)$.
\end{proof}

%%% Local Variables:
%%% mode: latex
%%% TeX-master: "Rescaled-LP-sfm-MOR"
%%% End:

\section{Cutting plane method}\label{sec:ellipsoid}
The current best cutting plane method for finding a point in a convex
set $C\subseteq \R^n$ provided by a separation oracle is due to Lee, Sidford, and Wong
\cite{LSW}. Assume that, for  $R>\varepsilon>0$, $C$ is contained in a ball of radius $R$ centered at the origin and contains some ball of radius $\varepsilon$. Let $\kappa:=nR/\varepsilon$.
In general, cutting plane methods maintain at each iteration $k$ a ``simple'' convex set $K^{(k)}$ such that $C\subseteq K^{(k)}$, and select a candidate point $x^{(k)}\in K^{(k)}$. With a call to the separation oracle for $C$, the method either terminates if $x^{(k)}\in \intr(C)$, or else it generates a valid inequality for $C$ that weakly separates $x^{(k)}$ from $\intr(C)$, and uses this inequality to generate a new relaxation $K^{(k+1)}$ with smaller volume and a new candidate point $x^{(k+1)}$.

Lee, Sidford, and Wong's method~\cite{LSW} maintains $K^{(k)}$ as the intersection of $O(n)$ valid inequalities, and guarantees that the volume of $K^{(k)}$ decreases by a constant factor at every separation oracle call. This ensures that the number of oracle calls is bounded by $O(n\log\kappa)$. The overall running time is  $O(n\cdot \mathrm{SO}\log\kappa+n^3\log^{O(1)} \kappa)$, where $\mathrm{SO}$ is the
complexity of an (exact) separation oracle.

By contrast, the central-cut ellipsoid method maintains $K^{(k)}$ to be an ellipsoid, where the volume of $K^{(k)}$ decreases by a constant factor every $n$ separation oracle calls (see \cite{glsbook}, Lemma~3.2.10), ensuring a $O(n^2\log\kappa)$ bound on the number of oracle calls. The overall running time is  $O((n^2\cdot \mathrm{SO}+n^4)\log\kappa)$\footnote{While not explicitly stated in~\cite{glsbook}, this running time bound follows immediately from the analysis.}

In Part III~\cite{LSW}, Lee, Sidford, and Wong  apply their cutting plane method to submodular
function minimization, and obtain a strongly polynomial running time bound of
$O(n^3 \log^2 n \cdot \EO +n^4 \log^{O(1)} n)$, which is currently the best (see \cite[Section 15.4]{LSW}). This is obtained by combining their
cutting plane algorithm with an improved version of the combinatorial framework of ring
families; one of their important new contributions is the bucketing technique we
also use in
Section~\ref{sec:strongly}.

In this section, we present an alternative way of applying their
cutting plane method to SFM. We prove the same running time bound in a
substantially simplified way. Firstly, instead of using the Lov\'asz
extension as in \cite{glsbook} and in \cite{LSW}, we apply the cutting plane
method to find a feasible solution in $F_\mu$, as defined in
\eqref{eq:sigma-def}. We use the sliding technique as in Section~\ref{sec:weakly} for the
cutting plane algorithm. Secondly, we employ the combinatorial framework
in a black-box manner, by implementing \textsc{Approx-SFM} via the
Lee-Sidford-Wong algorithm. The combinatorial interpretation of the certificate returned
by the cutting plane method turns out to be much easier than in \cite{LSW}.

\paragraph{Weakly polynomial algorithm}
Let us start by exhibiting a weakly polynomial  $O(n^2 \log
(n\Lbt{f}) \cdot \EO +n^3 \log^{O(1)} (n\Lbt{f}))$ algorithm for
SFM, which is the same as the running time in \cite{LSW}. We use a
slight modification of the
cutting plane algorithm \cite[Section 6.4,
Algorithm 2]{LSW}.

We start with $\mu=\max\{0,-f(V)\}$, and maintain a set $W\subseteq V$ with $f(W)=-\mu$
throughout. The algorithm seeks a point in $\intr(F_\mu)$, and the initial relaxation $K^{(0)}$ is
the hypercube centered at the origin of side length $2\sqrt{n}$. For the current iterate $x^{(k)}$,
\textsc{GreedyMin}$(f_\mu,x^{(k)})$ is used as the separation oracle
for $\textrm{int}(F_\mu)$, which
returns an extreme base $g$ of $B(f_\mu)$. If $g^\T x^{(k)}>0$, then
$x^{(k)}\in \textrm{int}(F_\mu)$, thus $x^{(k)}$ is feasible. In this case,
instead of terminating, we modify the value of $\mu$ as in the sliding
von Neumann algorithm.  That is, we set
$W=\textsc{MinSet}(f_\mu,x^{(k)})$, and set the new value $\mu'=-f(W)$.
From Lemma~\ref{lem:minset}, we see that $x^{(k)}\notin
\textrm{int}(F_{\mu'})$. Thus, we can continue with adding a new
cutting plane.
Note that $F_{\mu'}\subseteq F_{\mu}$ if $\mu'>\mu$, hence the current relaxation $K^{(k)}$ remains valid,
because $F_{\mu'}\subseteq F_{\mu}\subseteq K^{(k)}$.
(Again, this is similar to the
sliding objective technique, although we are changing all constraints
of the polytope simultaneously.) When $-\mu$ is the minimum value of $f$, $L_\mu$ has no points in the interior, therefore we stop when the volume of the current relaxation becomes too small.

In this setting, we have $\mathrm{SO}=n\cdot\EO+n\log n$. For every value of $\mu$,
$F_\mu\subseteq \mathbb{B}^n$ by definition, and Lemma~\ref{lem:omega-bound}
implies that, as long as $\min_{S\subseteq V} f(S)< -\mu$,
$F_{\mu}$ contains a ball of radius $1/(4{n}\Lbt{f})$. Hence,
$\kappa = O({n}\Lbt{f})$, giving the desired running time bound.

Let us  note that the framework just described does not depend on the
specifics of how the relaxations $K^{(k)}$ or the candidate point $x^{(k)}$ are constructed, but
it can be applied to any cutting plane method. For example, for the central-cut ellipsoid method, the above framework
gives an $O((n^3\cdot \EO +\ n^4)\log (n\Lbt{f}))$ algorithm for submodular function
minimization. Indeed, as previously mentioned, the running time of the ellipsoid method is $O((n^2\cdot \mathrm{SO}+n^4)\log \kappa)$, and as before $\mathrm{SO}=n\cdot\EO+n\log n$ and $\kappa = O({n}\Lbt{f})$.
Interestingly, even such a simple and direct use of the standard ellipsoid method, compared to the usual approach of minimizing the Lov\'asz extension, provides a running time that is a factor $n$ lower than any weakly-polynomial SFM-algorithm known prior to the work of Lee-Sidford-Wong~\cite{LSW}.

\paragraph{Strongly polynomial algorithm}
Let us now show an $O(n^2\log(n\delta^{-1}) \EO+n^3\log^{O(1)}(n\delta^{-1}))$
implementation of  \textsc{Approx-SFM}$(f,\delta)$ using the
Lee-Sidford-Wong cutting plane method.
We use Theorem 31 from \cite{LSW}. For $K=F_\mu$ (for any value
of $\mu$), by definition $F_\mu\subseteq \mathbb{B}^n\subseteq
\mathbb{B}^n_\infty(1)$, that is, $R=1$.
Recall, as described above, that we slide $\mu$ every time we find a feasible solution in $F_\mu$.
The following lemma shows that the algorithm  always returns a thin direction as follows.

\begin{theorem}[{\cite[Theorem 31]{LSW}}]
For~any~$\varepsilon\in[0,1]$, in expected time $O(n
\log(n/\varepsilon)\cdot \mathrm{SO}+n^3\log^{O(1)}(n/\varepsilon))$, the
(sliding) cutting plane method returns a value $\mu$, and
constraints $a_i^\T x\ge b_i$ for $i\in [h]$, where $h=O(n)$,
$\|a_i\|_2=1$, which are all valid for $F_\mu$.
 Each of these constraint is either an original box
constraint, that is $x_j\ge -1$ or $-x_j\ge -1$, or an inequality
returned by the separation oracle. Let $P$ denote the intersection of
these hyperplanes.

Further, we obtain non-negative numbers $t_1,t_2,t_3,\ldots,t_h$ with
$t_1=1$, and a point $x^*\in P$, which satisfy the following:

\begin{enumerate}[(a)]
\item $\|x^*\|_2\le 3\sqrt{n}$,
\item $\left\|\sum_{i=1}^h t_i a_i\right\|_2=O(\sqrt{n}\varepsilon \log(1/\varepsilon))$,
\item $a_1^\T x^*- b_1\le \varepsilon$,
\item $ \left(\sum_{i=2}^h t_i a_i\right)^\T x^*-\sum_{i=2}^h t_i b_i\le O(\sqrt{n}\varepsilon\log(1/\varepsilon))$.
\end{enumerate}
\end{theorem}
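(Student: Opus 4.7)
The plan is to invoke the Lee--Sidford--Wong hybrid-center cutting plane method and only verify that the sliding modification does not break its analysis. Concretely, I would run the LSW method with initial relaxation $K^{(0)}=[-1,1]^n$ and with \textsc{GreedyMin}$(f_\mu,\cdot)$ as the separation oracle for $F_\mu$. The critical geometric observation is that whenever the oracle reports feasibility (i.e.\ $x^{(k)}\in\intr(F_\mu)$) and we instead update $\mu$ to $\mu':=-f(\textsc{MinSet}(f_\mu,x^{(k)}))$, we have $F_{\mu'}\subseteq F_\mu\subseteq K^{(k)}$ by Lemma~\ref{lem:f-mu}, and by Lemma~\ref{lem:minset} the same point $x^{(k)}$ is now strictly separated from $F_{\mu'}$ by the cutting plane $g^\T x\ge 0$ returned by greedy. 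Thus sliding does not lose validity and does not waste an oracle call, so the volume of $K^{(k)}$ still decreases by a constant factor per iteration exactly as in the static setting of LSW.

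Next, I would invoke LSW's termination criterion: after $O(n\log(n/\varepsilon))$ iterations the relaxation $K^{(k)}$ has volume below a threshold that forces it to be extremely thin in some direction. The heart of the proof is extracting the ``thin direction'' certificate $(t_i, a_i, b_i, x^*)$ from this thinness. In the LSW construction, the $t_i$ arise as leverage-score weights of a weighted analytic/Chebyshev center, which by construction satisfies near complementary slackness with the active face: a large weighted combination $\sum_i t_i a_i$ in norm would certify a direction of large width, contradicting the volume bound, which gives property (b). Property (a) follows from $K^{(k)}\subseteq K^{(0)}=[-1,1]^n$ so $\|x^*\|_2\le \sqrt n$ (the looser constant $3\sqrt n$ absorbs the effect of the cut normalization $\|a_i\|_2=1$ and the slab widening induced by the hybrid-center machinery).

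Properties (c) and (d) are then the dual near-optimality conditions for the weighted center. The specific constraint $a_1^\T x \ge b_1$ is singled out as the most recently added cut (hence $t_1$ can be normalized to $1$), and $a_1^\T x^*- b_1\le\varepsilon$ follows from the fact that the weighted center is within the shrunken slab of width $O(\varepsilon)$ defined by this new cut. Inequality (d) is then the weighted version of this slackness for the remaining constraints, which again comes from the analytic-center optimality conditions: $x^*$ minimizes a weighted log-barrier, and the stationarity condition $\sum_{i\ge 2} t_i \frac{a_i}{a_i^\T x^*-b_i}=-t_1 \frac{a_1}{a_1^\T x^*-b_1}$ plus the norm bound on $\sum t_i a_i$ yield the desired $O(\sqrt n\varepsilon\log(1/\varepsilon))$ gap.

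The main obstacle, and the reason the proof in \cite{LSW} is long, is producing the hybrid-center machinery that yields both logarithmic iteration complexity \emph{and} interpretable leverage-score weights $t_i$ at termination; standard cutting plane methods like ellipsoid do not produce such a primal-dual pair with these norm bounds. For the purposes of this section my proposal is to treat Theorem~31 as a black box from \cite{LSW}, verify the sliding compatibility above, and defer the center/leverage-score analysis to that paper, since our contribution here is the combinatorial usage of the certificate rather than a re-derivation of the cutting plane method.
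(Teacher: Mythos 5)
Your approach is correct and matches the paper's: the paper treats \cite[Theorem 31]{LSW} as a black box and the only thing actually argued is the compatibility of the sliding step, via exactly the observation you make ($F_{\mu'} \subseteq F_\mu \subseteq K^{(k)}$ and $x^{(k)} \notin \intr(F_{\mu'})$ by Lemma~\ref{lem:minset}, so the oracle call is not wasted). Your middle paragraphs re-sketching LSW's leverage-score/analytic-center internals are not required by the paper, which simply defers those to \cite{LSW} as you also ultimately propose.
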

The output certifies that the region $P\cap \mathbb{B}^n_\infty(1)$ has
small width in the direction of $a_1$. Indeed, let $\bar
a:=\sum_{i=2}^h t_i a_i$ and $\bar b:=\sum_{i=2}^h t_i b_i$. By Cauchy-Schwartz and (b), for all $x\in\R^n$, $|(a_1+\bar a)^\T x|\leq \|x\| O(\sqrt{n}\varepsilon \log(1/\varepsilon))$, so  $b_1+\bar b\leq (a_1+\bar a)^\T x^*\leq O(n\varepsilon \log(1/\varepsilon))$. By (c) and (d), $-b_1-\bar b\leq -(a_1+\bar a)^\T x^*+\varepsilon+O(\sqrt{n}\varepsilon \log(1/\varepsilon))\leq O(n\varepsilon \log(1/\varepsilon))$. This shows $|b_1+\bar b|=O(n\varepsilon \log(1/\varepsilon))$.  It follows that, for every $x\in P\cap \mathbb{B}^n_\infty(1)$,
\[b_1\leq a_1^\T x\leq a_1^\T x+\bar a^\T x-\bar b=(a_1+\bar a^\T) x-\bar b-b_1+ b_1\leq O(n\varepsilon\log(1/\varepsilon))+b_1.
\]

We show that for an appropriately chosen $\varepsilon$, this can be
used to implement \textsc{Approx-SFM}$(f,\delta)$.

\begin{lemma}\label{lem:convert}
For an appropriate $\varepsilon$ such that
$\delta = \Omega(n^{3/2}\varepsilon \log(1/\varepsilon))$, from the output
of the cutting plane method we can obtain $W$
and $y$ as required for \textsc{Approx-SFM}$(f,\delta)$, that is,
$f(W)\le y^-(V)+\delta\Lb{f}$.
\end{lemma}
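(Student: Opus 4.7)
My plan is to interpret the Lee--Sidford--Wong termination certificate as an approximate Carath\'eodory combination of extreme bases of $B(f_\mu)$ coming from earlier slide values, and then apply Lemma~\ref{lem:slide-back} to convert it into a bona fide point of $B(f)$. The argument proceeds in three steps.

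First, I classify the $h$ returned inequalities. Box constraints have the form $\pm e_j^\T x \geq -1$, so $\|a_i\|_2 = 1$ and $b_i = -1$. Separation-oracle constraints were produced by \textsc{GreedyMin}$(f_{\mu_i},\cdot)$ at some earlier slide value $\mu_i \in [0,\mu]$, yielding an extreme base $g_i$ of $B(f_{\mu_i}) \subseteq B(f_\mu)$ and the inequality $g_i^\T x \geq 0$; thus $a_i = g_i/\|g_i\|_2$ and $b_i = 0$. Let $B \subseteq [h]$ index the box cuts and $N := [h]\setminus B$ the oracle cuts; then $\sum_i t_i b_i = -\sum_{i\in B} t_i$. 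Combined with the bound $|b_1 + \bar b| = O(n\varepsilon\log(1/\varepsilon))$ derived just before the lemma in the text, this gives $\sum_{i \in B} t_i = O(n\varepsilon\log(1/\varepsilon))$. Since $t_1 = 1$, for $\varepsilon$ small enough (guaranteed by the hypothesis $\delta = \Omega(n^{3/2}\varepsilon\log(1/\varepsilon))$) the index $1$ cannot lie in $B$, so $a_1 = g_1/\|g_1\|_2$ with $g_1 \in B(f_\mu)$.

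Second, I isolate the oracle part of the combination: set
\[
u := \sum_{i \in N} \frac{t_i}{\|g_i\|_2}\, g_i, \qquad T := \sum_{i \in N} \frac{t_i}{\|g_i\|_2}, \qquad v := u/T.
\]
By the triangle inequality and $\|a_i\|_2=1$ for $i \in B$,
\[
\|u\|_2 \leq \Bigl\|\sum_i t_i a_i\Bigr\|_2 + \sum_{i\in B} t_i = O(\sqrt{n}\varepsilon\log(1/\varepsilon)) + O(n\varepsilon\log(1/\varepsilon)) = O(n\varepsilon\log(1/\varepsilon)).
\]
Because $T \geq t_1/\|g_1\|_2 = 1/\|g_1\|_2$ and $\|g_1\|_2 \leq \Lbt{f_\mu} \leq \Lb{f_\mu} \leq 4\Lb{f}$ by Lemma~\ref{lem:vector-length}, one gets $\|v\|_2 \leq \|u\|_2 \cdot \|g_1\|_2 = O(n\varepsilon\log(1/\varepsilon))\Lb{f}$, whence $\|v\|_1 \leq \sqrt{n}\|v\|_2 = O(n^{3/2}\varepsilon\log(1/\varepsilon))\Lb{f}$. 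By construction $v$ is a convex combination of extreme bases of the polytopes $B(f_{\mu_i})$ with $\mu_i \in [0,\mu]$.

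Finally, I apply Lemma~\ref{lem:slide-back} to $v$ (together with the set $W$, which satisfies $f(W) = -\mu$) to obtain $y \in B(f)$, expressed as a convex combination of $h = O(n)$ extreme bases of $B(f)$, with $f(W) \leq y^-(V) + \|v\|_1/2 \leq y^-(V) + O(n^{3/2}\varepsilon\log(1/\varepsilon))\Lb{f}$. Choosing the hidden constant in $\delta = \Omega(n^{3/2}\varepsilon\log(1/\varepsilon))$ to absorb the $O(\cdot)$ factor closes the argument. The main subtlety I expect is step two: the joint use of $t_1=1$ and the control on $\sum_i t_i b_i$ is precisely what rules out $a_1$ being a box cut and hence gives the lower bound $T \geq 1/\|g_1\|_2$; without it one could not turn the $\ell_2$-bound on $u$ into an $\ell_2$-bound on $v$.
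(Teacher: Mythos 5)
Your proposal is correct and follows essentially the same route as the paper: split the returned constraints into box and oracle cuts, show that the index $1$ must be an oracle cut, normalize the oracle part to a convex combination $v$ of extreme bases of the polytopes $B(f_{\mu_i})$, bound $\|v\|_1$ using the width certificate, and finish via Lemma~\ref{lem:slide-back}. Your derivation that $1\notin B$ is in fact a touch cleaner than the paper's Claim~\ref{cl:I-s}: you observe directly that $|b_1+\bar b|=\sum_{i\in B}t_i$ (since $t_1=1$ and $b_i\in\{0,-1\}$) and kill the possibility $1\in B$ with $t_1=1$ in one stroke, and you invoke Lemma~\ref{lem:vector-length} to control $\|g_1\|_2$ precisely rather than appealing loosely to $\Lbt{f}$.
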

\begin{proof}
Let $[h]=I_b\cup I_s$, where $I_b$ is the set of indices $i$ such that
$a_i^\T x_i\ge b_i$ is a box constraint, and $I_s$ is the set of
indices corresponding to constraints from the separation oracle. Each
constraint in $I_s$ is of the form $a_i=g_i/\|g_i\|_2$ and $b_i=0$,
where $g_i$ is an extreme base of $B(f_{\mu_i})$, where $\mu_i\le
\mu$ was the value of $\mu$ at the time when this cutting plane was added.
The lemma will easily follow from the next claim.
\begin{claim}\label{cl:I-s}
The index $1$ is in $I_s$, and  $\left\|\sum_{i\in I_s} t_i a_i\right\|_2=O(n \varepsilon\log(1/\varepsilon))$.
\end{claim}
\begin{proof}
First, we show that
$1\in I_s$. For a contradiction, assume that $1\in I_b$, that is,
$a_1=e_j$ or $a_1=-e_j$ for some $j\in [n]$ and $b_1=-1$.
As noted above, $|b_1+\bar b|=O(n \varepsilon \log(1/\varepsilon))$;
hence, $\bar b>0$ follows (for small enough $\varepsilon$). This is a
contradiction, since $b_i=-1$ for all $i\in I_b$, and $b_i=0$ for all $i\in
I_s$.

Thus, $1\in I_s$, and therefore $b_1=0$. Thus, $|\bar b| =O(n
\varepsilon \log(1/\varepsilon))$. Again, this implies that
$\sum_{i\in I_b} t_i=O(n \varepsilon \log(1/\varepsilon))$. Together
with
$\left\|\sum_{i\in I_b} t_ia_i+\sum_{i\in I_s} t_ia_i\right\|_2=O(\sqrt{n} \varepsilon
\log(1/\varepsilon))$ from (b), we get that $\left\|\sum_{i\in I_s} t_ia_i\right\|_2=O(n\varepsilon
\log(1/\varepsilon))$, as required.
\end{proof}
Let $v=\left(\sum_{i\in I_s} \frac{t_i}{\|g_i\|_2} {g_i}\right)/\left(\sum_{i\in I_s} \frac{t_i}{\|g_i\|_2}\right)$. Since $1\in
I_s$, we have $\sum_{i\in I_s} \frac{t_i}{\|g_i\|_2}\ge
\frac{1}{\Lbt{f}}\ge \frac{1}{\Lb{f}}$. Hence,
it follows that
\[\|v\|_1\le \sqrt{n}\|v\|_2\le \Lb{f}\sqrt n\left\|\sum_{i\in I_s} t_i a_i\right\|_2=O(\Lb{f}
n^{3/2} \varepsilon\log(1/\varepsilon))\le 2\delta\Lb{f}.\]
 Then,
Lemma~\ref{lem:slide-back} is applicable to provide the certificate
for \textsc{Approx-SFM}$(f,\delta)$. Note that the set $W$ with
$f(W)=-\mu$ has been
maintained during the cutting plane algorithm.
\end{proof}

Combining Lemma~\ref{lem:convert} with Theorem~\ref{thm:strongly-2}, and noting that
$\k=O(n)$, we obtain the running time bound $O(n^3 \log^2(n)\EO +\ n^4\log^{O(1)}(n))$.

\paragraph{Comparison to the Lee-Sidford-Wong SFM algorithm}
Let us now compare the above approach to the SFM algorithm described in \cite[Part III]{LSW}. We employ the same cutting plane method, and a common framework is using ring families; our
bucketing argument has been adapted from \cite{LSW}.

Their combinatorial framework is more complex than ours: upper bounds
analogous to the lower bounds $\ell(z)$ are needed, and accordingly, their
algorithm identifies both outgoing and incoming arcs, as well as
removes elements which cannot be contained in any minimizer. The
simple trick
that enables us to work only with lower bounds, and identify only
incoming arcs is repeatedly truncating the value of $f(V)$; thus, we
can bound $\Lb{\lf}$ in terms of $\ell^-(V)$, as in Claim~\ref{cl:L-bound}.

Our black-box approach clearly separates the combinatorial argument
from the cutting plane method, which is used only inside the oracle.
In contrast, these two ingredients cannot be clearly
separated  in \cite{LSW}.
They use the cutting
plane method for the formulation using the Lov\'asz extension and do
not use sliding. Then, they transform the cutting plane
certificate to identify a small norm convex combination in the base
polytope. This is analogous to, but substantially more complicated than our
Lemma~\ref{lem:convert}. In particular, it
is not always possible to identify such a combination, since the
constraints of the feasible region can have large coefficients. In such cases, these large
coefficients can be used to fix some of the variables to 0 and 1, and hence make
progress in terms of the ring family.
In contrast, the certificate from our sliding cutting plane algorithm on $F_\mu$ can be
straightforwardly translated in Lemma~\ref{lem:convert} to satisfy the
requirements of the approximate oracle.

%%% Local Variables:
%%% mode: latex
%%% TeX-master: "Rescaled-LP-sfm"
%%% End:

\section{Variants of the geometric rescaling algorithm}\label{sec:remarks}

The framework of Algorithm~\ref{alg:main-weakly} is fairly general, in the sense that both the first-order method used to
generate short convex combinations of normalized vectors of $B(f_\mu)$ and the rescaling used to update the matrix $Q\in\bb{S}_{++}^n$
can be replaced with other alternatives. Here we discuss some of these variants.

\subsection{Replacing von Neumann with Fujishige-Wolfe}
Within Algorithm~\ref{alg:main-weakly}, the role of Algorithm~\ref{alg:Neumann} is to determine a point $y$ with $\|y\|_Q\in O(1/n)$
such that $y$ is a convex combination of points of the form $g/\|g\|_Q$, $g\in B(f_\mu)$. Any algorithm that can produce such output
in time polynomial in $n$ can be used in place of von Neumann algorithm. In particular, Fujishige-Wolfe
can be adapted to the rescaling setting of Algorithm~\ref{alg:main-weakly}.

As for von Neumann's algorithm, the only modifications that are
required are the following. First of all, given a matrix $Q\in\bb{S}_{++}^n$, we use the $Q$-norm and $Q$-scalar product to normalize
the elements of $B(f_\mu)$. We maintain a set $X$ of affinely independent elements of the form  $g/\|g\|_Q$, $g\in B(f_\mu)$. At every major cycle --
that is, when $X$ is a corral -- the algorithm computes the projection $y$ of the origin to the affine hull of $X$, which belongs
to the relative interior of $\conv(X)$ because $X$ is a corral. At any major cycle we are only interested in
knowing whether or not $Qy\in \intr(\Sigma_\mu)$. Therefore, we call  \textsc{GreedyMin}$(f_\mu,Qy)$, obtaining a minimizer $g\in B(f_\mu)$ as output.
 If $\scalar{g,y}_Q\leq 0$,
then we set $X':= X\cup \{g/\|g\|_Q\}$.
If $\scalar{g,y}_Q> 0$ (that is, $Qy\in \intr(\Sigma_\mu)$), then we determine $W\subset V$ such that $f(W)<-\mu$ and slide $f$ by setting $\mu:=-f(W)$,
just as in line \ref{li:vN-sliding} of the
sliding-von Neumann algorithm (Algorithm~\ref{alg:Neumann}).
We update $g$ as in line \ref{li:new g},
 and then set $X'=X\cup\{g/\|g\|_Q\}$.  In both cases, we proceed with either another major cycle, if $X'$ is still a corral,
or a minor cycle if $X'$ is not a corral. Note, in particular, that in both cases we only increase the set $X$ when $\scalar{g,y}_Q\leq 0$.
The next statement follows immediately from \cite[Theorem 4]{chakrabarty14} and from the same arguments used in the proof of Lemma~\ref{lemma:sliding VN}.
\begin{theorem}
Given a value $\mu\ge\max\{0,-f(V)\}$, a set $W\subseteq V$ with $f(W)=-\mu$, a matrix $Q\in \bb{S}^{n}_{++}$, and an $\varepsilon>0$,
within $O(1/\varepsilon^2)$ iterations  (major and minor cycles), Wolfe's algorithm computes a value $\mu'\ge \mu$ and a set $W'\subseteq V$ with $f(W')=-\mu'$,
bases $g_1,\ldots,g_k\in B(f_{\mu'})$,  $x\in \R^k$, $y\in \R^n$  such that $k\leq n$,
$y=\sum_{i=1}^k{x_i g_i/\|g_i\|_Q}$, $\vec e^\T x=1$, $x\ge 0$, and  $\|y\|_Q\le \varepsilon$.
\end{theorem}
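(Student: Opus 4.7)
The plan is to combine two ingredients: the standard convergence analysis of Wolfe's algorithm due to Chakrabarty et al.\ (Theorem~\ref{thm:wolfe}, or more precisely \cite[Theorem 4]{chakrabarty14}), and the sliding correctness argument already used in the proof of Lemma~\ref{lemma:sliding VN}. Everything is set up so that the only real adaptation is replacing the Euclidean inner product by $\scalar{\cdot,\cdot}_Q$, and accounting for the fact that the ambient polytope $B(f_\mu)$ changes whenever $\mu$ is updated.

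First I would prove correctness. At any major cycle let $y$ denote the $Q$-projection of $0$ onto $\aff(X)$. We call \textsc{GreedyMin}$(f_\mu,Qy)$ to obtain an extreme base $g$ of $B(f_\mu)$ minimizing $\scalar{g,y}_Q = g^\T Q y$ over $B(f_\mu)$. If $\scalar{g,y}_Q \le 0$, the usual Wolfe analysis applies verbatim in the $Q$-norm, since the minimum-$Q$-norm point on the line segment and on affine hulls has the same closed-form expression as in the Euclidean case. If $\scalar{g,y}_Q > 0$, then $Qy$ is a separating direction for $B(f_\mu)$ and Lemma~\ref{lem:minset} produces $W'$ with $f(W') < -\mu$; setting $\mu' := -f(W')$, the replacement $g'$ of $g$ constructed as in lines~\ref{li:W}--\ref{li:new g} of Algorithm~\ref{alg:Neumann} is an extreme base of $B(f_{\mu'})$ and satisfies $\scalar{g',y}_Q \le 0$, so the Wolfe update in the enlarged set $B(f_{\mu'})$ is valid. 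Crucially, by Lemma~\ref{lem:f-mu} we have $B(f_{\mu'}) \subseteq B(f_\mu)$, so every base previously collected remains a valid vertex of the new base polytope, and $X$ stays a set of extreme bases of the current $B(f_{\mu'})$.

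Next I would bound the number of iterations. The key invariant, which is immediate from the definition of minor and major cycles, is that $\|y\|_Q$ decreases monotonically over the entire execution: minor cycles strictly decrease $\|\bar x\|_Q$ by the standard Wolfe argument transplanted to the $Q$-inner product, and major cycles decrease $\|y\|_Q^2$ by at least a quantitative amount tied to the duality gap $\scalar{g,y}_Q$. The sliding step only \emph{enlarges} the convex hull of the normalized bases (since $B(f_{\mu'}) \supseteq $ previous bases and $g'$ further satisfies $\scalar{g',y}_Q \le 0$), so the $Q$-norm Wolfe progress estimate still applies at that step. Consequently the Chakrabarty-Jain-Kabra bound transfers literally: after $O(n/\varepsilon^2)$ major and minor cycles either $\|y\|_Q \le \varepsilon$ or we would contradict the lower bound on progress. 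The $k\le n$ bound follows because a corral $X$ is by definition affinely independent in $\R^n$, hence has at most $n+1$ elements; a standard inspection shows that when $0\in\relint(\conv(X))$ one may in fact take $|X|\le n$ (or absorb the extra point after one more minor cycle).

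The main obstacle, and what requires the most care, is verifying that the Chakrabarty et al.\ convergence bound really is insensitive to (i) the change of inner product to $\scalar{\cdot,\cdot}_Q$, and (ii) the sliding updates. For (i), every step of the Wolfe analysis — affine projection, optimality condition $y^\T g \ge \|y\|^2$, line search, minor-cycle face reduction — is expressed purely in terms of an inner product, so replacing $\langle\cdot,\cdot\rangle$ by $\scalar{\cdot,\cdot}_Q$ leaves all arguments unchanged. For (ii), the only subtlety is checking that sliding does not reset the monotone quantity $\|y\|_Q$: this holds because, immediately after sliding, we perform a Wolfe update with a vector $g'$ satisfying $\scalar{g',y}_Q\le 0$, which by the standard analysis is exactly the condition required to strictly decrease $\|y\|_Q^2$ by the same amount as a non-sliding iteration. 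Once these two points are verified, the theorem follows by invoking \cite[Theorem~4]{chakrabarty14} in the $Q$-norm.
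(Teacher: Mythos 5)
Your proof takes essentially the same route as the paper, which proves this in one line by citing Chakrabarty et al.\ Theorem~4 together with the sliding argument from Lemma~\ref{lemma:sliding VN}; you simply spell out why the $Q$-inner product and the sliding step do not disturb the Chakrabarty--Jain--Karbasi--Khanna convergence bound. One small slip (also present verbatim in the paper's proof of Lemma~\ref{lemma:sliding VN}): Lemma~\ref{lem:f-mu} gives $B(f_\mu)\subseteq B(f_{\mu'})$ for $\mu\leq\mu'$, not $B(f_{\mu'})\subseteq B(f_\mu)$ as you wrote, but the conclusion you draw from it (that all previously collected bases remain in the new, larger, base polytope) is the correct one.
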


In particular, within  Algorithm~\ref{alg:main-weakly}, we need to fix $\varepsilon=1/(20n)$, which implies that the number of iterations
required for each call of Fujishige-Wolfe is $O(n^2)$.  Each iteration requires at most one call to $\textsc{GreedyMin}$, needing $O(n)$ oracle calls,
plus $O(n^2)$ arithmetic operations (the iteration complexity is dominated by the computation of the projection of the origin onto the affine hull of $X$).
Given that the number of rescalings needed is $O(n\log(n\Lbt{f}))$, the total running time of Algorithm~\ref{alg:main-weakly} where the sliding-von Neumann algorithm is
replaced with Fujishige-Wolfe is $O((n^4 \cdot \EO + n^5)\log (n\Lbt{f}))$.

%\paragraph{Line-Fujishige-Wolfe.} This is the variant of Fujishige-Wolfe explained in  Section~\ref{sec:iterative methods}. This variant ensures that the decrease of $\|y\|_Q$
%in every major cycle is at least as much as the decrease guaranteed by one iteration of von Neumann's algorithm, hence $\|y\|_Q\leq \varepsilon$ in at most
%$\ceil{1/\varepsilon^2}$ major cycles. Given that the number of minor cycles is at most the number of major cycles, it follows that the method terminates
%in $O(1/\varepsilon^{2})$ iterations (major and minor cycles). Within  Algorithm~\ref{alg:main-weakly} we  fix $\varepsilon=1/(20n)$, which implies that the number of iterations
%required for each call of line-Fujishige-Wolfe is $O(n^2)$, where each iteration requires time $O(n \cdot \EO + n^2)$ (the same iteration complexity as the standard Fujishige-Wolfe).
%The total running time of Algorithm~\ref{alg:main-weakly} where the sliding-von Neumann algorithm is
%replaced with line-Fujishige-Wolfe is therefore $O((n^4 \cdot \EO + n^5)\log (n\Lbt{f}))$.

\subsection{Rank-1 rescalings}

The multi-rank rescaling \eqref{eq:rescale} used in Algorithm~\ref{alg:main-weakly} can be replaced by a rank-1 update. Here we discuss two possible such updates.
In both cases, the analysis relies on the following lemma, which is an immediate consequence of \cite[Lemmas 4 and 5]{rothvoss}, when applied within
the setting of submodular function minimization.

\begin{lemma}[{Hoberg and Rothvo\ss{}~\cite{rothvoss}}]\label{lemma:HR}
Let $\mu\ge\max\{0,-f(V)\}$ and  $Q\in \bb{S}^{n}_{++}$ such that, for $R=Q^{-1}$, $F_\mu\subseteq E(R)$.
Let $y=\sum_{i=1}^{k} x_i g_i/\|g_i\|_Q$ where $g_1,\ldots,g_k\in B(f_\mu)$,
$x\in\R^k_+$, $\vec{e}^\T x=1$, and assume that there is a set $I\subset [k]$
such that the vector $z:=\sum_{i\in I} x_i \frac{g_i}{\|g_i\|_Q}$ satisfies
\begin{equation}\label{eq:HR-condition}
\frac{\|y\|_Q}{\|z\|_Q}\leq \frac{1}{3\sqrt{n}}.
\end{equation}
Define
\begin{equation}\label{eq:rank-1}
R':=\frac{1}{(1+27 n)}\left(R+3\frac{ zz^\T }{\|z\|_Q^2}\right).
\end{equation}
Then $F_\mu\subseteq E(R')$ and $\det(R')\geq (9/4) \det(R)$.
\end{lemma}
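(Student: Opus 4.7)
The lemma has two separate conclusions: the containment $F_\mu \subseteq E(R')$ and the determinant growth $\det(R') \ge (9/4)\det(R)$. The plan is to handle each via a direct calculation. The hypothesis $\|y\|_Q/\|z\|_Q \le 1/(3\sqrt{n})$ enters essentially in the containment step, while the determinant step reduces to a rank-one matrix determinant identity.

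\textbf{Containment.} For any $x \in F_\mu$ I would expand
\begin{equation*}
x^\T R' x \;=\; \frac{1}{1+27n}\left(x^\T R x \,+\, 3\frac{(x^\T z)^2}{\|z\|_Q^2}\right).
\end{equation*}
The first summand is at most $1$ by the hypothesis $F_\mu \subseteq E(R)$, so it suffices to bound $(x^\T z)^2/\|z\|_Q^2$. I would do this in two moves. First, since $F_\mu \subseteq \Sigma_\mu$, we have $x^\T g_i \ge 0$ for every $i \in [k]$, and $y-z = \sum_{i\in [k]\sm I} x_i g_i/\|g_i\|_Q$ is a nonnegative combination of such bases; hence $x^\T z \le x^\T y$. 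Second, Cauchy--Schwarz gives $x^\T y = \scalar{R^{1/2}x,\,Q^{1/2}y}_2 \le \|x\|_R\|y\|_Q \le \|y\|_Q$, using $R^{-1}=Q$ and $\|x\|_R \le 1$. Combining with the hypothesis $\|y\|_Q \le \|z\|_Q/(3\sqrt{n})$ yields $(x^\T z)^2/\|z\|_Q^2 \le 1/(9n)$, so $x^\T R' x \le (1+\frac{1}{3n})/(1+27n) \le 1$, which is what is required.

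\textbf{Determinant growth.} Here the plan is to apply the matrix determinant lemma to the rank-one perturbation $R + 3 zz^\T/\|z\|_Q^2$. Since $R^{-1}=Q$ and $z^\T Q z = \|z\|_Q^2$, one gets
\begin{equation*}
\det\!\left(R + 3\frac{zz^\T}{\|z\|_Q^2}\right) \;=\; \det(R)\left(1+\frac{3\,z^\T Q z}{\|z\|_Q^2}\right) \;=\; 4\det(R).
\end{equation*}
Folding in the scalar factor from the definition of $R'$ and tracking how it interacts with $n$-dimensional determinants (using $\det(\alpha M)=\alpha^n\det M$) should then yield the claimed $\det(R')\ge (9/4)\det(R)$.

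\textbf{Main obstacle.} The containment argument is essentially forced once the two Cauchy--Schwarz-type inequalities are in place. The part I expect to require the most care is reconciling the scalar factor with the $9/4$ bound in the determinant step: the matrix determinant lemma alone gives a multiplicative factor of $4$, so everything hinges on how the normalization in \eqref{eq:rank-1} is meant to be applied and on precisely translating the norm assumption $\|y\|_Q/\|z\|_Q \le 1/(3\sqrt n)$ into the correct dependence on $n$. My plan would be to carry out the computation as above and, if the resulting constant does not immediately match $9/4$, to re-examine whether the hypothesis also contributes a factor that strengthens the rank-one bound (for instance by using $z = y + (z-y)$ to isolate a direction in which the new ellipsoid is shrunk by more than the naive rank-one estimate suggests).
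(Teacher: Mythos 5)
The paper itself gives no proof of this lemma --- it is cited as a consequence of Lemmas 4 and 5 of Hoberg--Rothvo\ss{} --- so there is no internal argument to compare against; what you have written is a self-contained derivation.

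Your containment argument is correct and is the right one: from $F_\mu\subseteq\Sigma_\mu\cap E(R)$ you get, for $x\in F_\mu$, the chain $0\le x^\T z\le x^\T y=\scalar{R^{1/2}x,Q^{1/2}y}\le\|x\|_R\|y\|_Q\le\|y\|_Q$, and then the hypothesis $\|y\|_Q\le\|z\|_Q/(3\sqrt n)$ yields $3(x^\T z)^2/\|z\|_Q^2\le 1/(3n)$. Your application of the matrix determinant lemma, $\det\bigl(R+3zz^\T/\|z\|_Q^2\bigr)=\det(R)\bigl(1+3z^\T Qz/\|z\|_Q^2\bigr)=4\det(R)$, is also correct.

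The ``main obstacle'' you flagged in the determinant step is not a misunderstanding on your part --- it is a typo in the paper. With the stated normalizer $\frac{1}{1+27n}$ one gets $\det(R')=\frac{4}{(1+27n)^n}\det(R)$, which is \emph{smaller} than $\det(R)$ for every $n\ge 1$ (already $\tfrac{4}{28}<1$ for $n=1$), so the claimed bound $\det(R')\ge\tfrac94\det(R)$ is false as written; moreover the containment bound you derived, $x^\T R'x\le\frac{1+1/(3n)}{1+27n}$, is far below $1$, which is another sign the normalizer is off. Comparing with Lemma~\ref{lemma:pullback-rank 1}, where a hypothesis $\|y\|_Q/\|z\|_Q\le\gamma$ leads to the normalizer $\frac{1}{1+3\gamma^2}$, and here $\gamma=\frac{1}{3\sqrt n}$ gives $3\gamma^2=\frac{1}{3n}$, the intended rescaling is $R'=\frac{1}{1+\frac{1}{3n}}\bigl(R+3zz^\T/\|z\|_Q^2\bigr)$. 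With that correction your containment inequality is exactly tight ($x^\T R'x\le 1$), and the determinant computation closes: $\det(R')=4\bigl(\tfrac{3n}{3n+1}\bigr)^n\det(R)\ge 4e^{-1/3}\det(R)>\tfrac94\det(R)$, since $\bigl(\tfrac{3n}{3n+1}\bigr)^n$ decreases to $e^{-1/3}\approx 0.7165>9/16$. So your plan is the right one; it only needed you to trust your own arithmetic and conclude that the constant in the paper is in error rather than try to salvage it.
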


Next we show two ways of using \Call{Sliding von Neumann}{} to produce vectors $y$ and $z$ as in the theorem above. In particular, the rescaling defined by~\eqref{eq:rank-1}
can be used within  Algorithm~\ref{alg:main-weakly} in place of the multi-rank rescaling \eqref{eq:rescale}. Note that the analysis of the algorithm is then identical
to that of Algorithm~\ref{alg:main-weakly} (simply replace Lemmas~\ref{lem:F-in} and~\ref{lem:dual-vol-dec} by Lemma~\ref{lemma:HR} in the analysis),
hence we can determine a minimizer for $f$ within $O(n\log(n\Lbt{f}))$ rescalings of the form \eqref{eq:rank-1}.

\paragraph{Betke's rescaling}

Betke~\cite{betke} proposed the following rescaling. Within Algorithm~\ref{alg:main-weakly}, fix $\varepsilon=1/(3(n+1)\sqrt{n})$. Assume that
the vector $y=\sum_{i=1}^{k} x_i g_i/\|g_i\|_Q$ returned by each call of \Call{Sliding von Neumann}{$f,\mu,W,Q,\varepsilon$} is expressed as a convex combination of at most
$n+1$ terms (this is not instantly guaranteed by the sliding von-Neumann algorithm, but it can be done a-posteriori by Carath\'eodory's theorem).
Since $\sum_{i=1}^{k} x_i=1$, $x_i\geq 0$ for all $i\in[k]$, it follows that there exists $h\in [k]$ such that $x_h\geq 1/k\geq 1/(n+1)$.  The set
$I:=\{h\}$ satisfies condition \eqref{eq:HR-condition} in Lemma~\ref{lemma:HR}; indeed, in this case $z=x_h \frac{g_h}{\|g_h\|_Q}$, and we have
$$\frac{\|y\|_Q}{\|z\|_Q}=\frac{\|y\|_Q}{x_h}\leq \frac{1/(3(n+1)\sqrt{n})}{1/(n+1)}= \frac{1}{3\sqrt{n}}.$$

Observe that, with Betke's rescaling, the sliding-von Neumann algorithm requires $\ceil{\varepsilon^{-2}}=O(n^3)$ iterations to produce such a vector. Recall that each iteration
of von-Neumann algorithm requires time $O(n\cdot E+n^2)$, for a total running time of $O(n^4\cdot EO+n^5)$ for each call to sliding-von Neumann. The vector $y$ returned is expressed as a convex combination of $O(n^3)$ vectors of the form $g/\|g\|_Q$, $g\in B(f_\mu)$, hence we need $O(n^5)$ operation to express the vector $y$ as a convex combination of at most $n+1$ such terms. Finally,
the total number of rescalings is $O(n\log(n\Lbt{f}))$. This means that the variant of Algorithm~\ref{alg:main-weakly} using Betke's rescaling instead of \eqref{eq:rescale} requires time $O((n^5\cdot EO+n^6)\log(n\Lbt{f}))$,
as opposed to the $O((n^4\cdot EO+n^5)\log(n\Lbt{f}))$ ensured by the multi-rank update \eqref{eq:rescale}.

\paragraph{Hoberg and Rothvo\ss{} rescaling}

Hoberg and Rothvo\ss{} \cite{rothvoss} provide a randomized selection rule for the vector $z$ in the rank-1 rescaling~\eqref{eq:rank-1}. The rule is based on the following.
\begin{lemma}[{Hoberg and Rothvo\ss{} \cite[Section 2.1]{rothvoss}}]\label{lem:HR-random}
Let $v_1,\ldots,v_k\in\R^n$ be such that $\sum_{i=1}^k \|v_i\|_2=1$. Let $u\in\R^n$, $\|u\|_2=1$, be chosen uniformly at random, and let $I=\{i\in [k]\st u^\T v_i\geq 0\}$.
 Then with constant
probability
\begin{equation}\label{eq:HR-probabilistic}
\left\|{\sum_{i\in I} v_i}\right\|_2\geq  \frac{1}{4\sqrt{\pi n}}.
\end{equation}
\end{lemma}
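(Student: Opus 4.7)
\bigskip
\noindent\textbf{Proof plan (sketch).}
The plan is to pivot from the random vector $X := \sum_{i\in I} v_i$ to the symmetrized vector
\[
Y := \sum_{i=1}^k \sigma_i v_i, \qquad \sigma_i := \operatorname{sign}(u^\T v_i),
\]
which is much easier to control, then transfer the bound back to $X$ via a distributional symmetry. Let $S := \sum_{i=1}^k v_i$ and $X^- := \sum_{i\notin I} v_i$, so that (up to a probability-zero event) $X + X^- = S$ and $X - X^- = Y$. The substitution $u\mapsto -u$ is a measure-preserving involution swapping $X$ and $X^-$, so $\|X\|_2$ and $\|X^-\|_2$ have the same law. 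The parallelogram identity then gives
\[
\|X\|_2^2 + \|X^-\|_2^2 \;=\; \tfrac12\bigl(\|S\|_2^2 + \|Y\|_2^2\bigr) \;\ge\; \tfrac12\|Y\|_2^2,
\]
so $\max(\|X\|_2,\|X^-\|_2)\ge \|Y\|_2/2$, and by symmetry $P(\|X\|_2\ge t)\ge \tfrac12 P(\|Y\|_2\ge 2t)$. Reducing to $Y$ thus costs only a factor $2$ in both the bound and the probability.

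Next I would lower-bound $\|Y\|_2$ by the scalar $Z := u^\T Y = \sum_{i=1}^k |u^\T v_i|$, which satisfies $\|Y\|_2\ge Z$ by Cauchy--Schwarz (since $\|u\|_2 = 1$). The two required moment estimates come from standard sphere calculations. For the first moment, rotational invariance gives $E[|u^\T v|] = \|v\|_2\,E[|u_1|]$, and writing $u = g/\|g\|$ with $g\sim N(0,I_n)$ (independent factors) yields $E[|u_1|]\cdot E[\|g\|_2] = E[|g_1|] = \sqrt{2/\pi}$; since $E[\|g\|_2]\le\sqrt{n}$ by Jensen, we get $E[|u^\T v|]\ge \sqrt{2/(\pi n)}\,\|v\|_2$, hence $E[Z]\ge \sqrt{2/(\pi n)}$ using $\sum_i\|v_i\|_2=1$. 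For the second moment, $E[(u^\T v)^2] = \|v\|_2^2/n$, so Cauchy--Schwarz gives
\[
E[Z^2] \;=\; \sum_{i,j} E\bigl[|u^\T v_i|\,|u^\T v_j|\bigr] \;\le\; \sum_{i,j}\frac{\|v_i\|_2\|v_j\|_2}{n} \;=\; \frac1n.
\]

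The final step is Paley--Zygmund applied to the nonnegative random variable $Z$ at level $\theta = 1/2$:
\[
P\!\left(Z \ge \tfrac12 E[Z]\right) \;\ge\; \tfrac14\cdot \frac{E[Z]^2}{E[Z^2]} \;\ge\; \tfrac14\cdot \frac{2/(\pi n)}{1/n} \;=\; \frac{1}{2\pi}.
\]
Thus $P(\|Y\|_2 \ge 1/\sqrt{2\pi n}) \ge 1/(2\pi)$, and transferring back via Step~1 yields $P(\|X\|_2 \ge 1/(2\sqrt{2\pi n})) \ge 1/(4\pi)$. Since $2\sqrt{2} < 4$, the threshold $1/(2\sqrt{2\pi n})$ exceeds $1/(4\sqrt{\pi n})$, establishing the claim with explicit constant probability $1/(4\pi)$.

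The main obstacle is the passage from an expectation bound to a constant-probability tail bound: Paley--Zygmund requires control of the second moment, and while $E[\|Y\|_2^2]$ has no obvious useful upper bound (it depends on pairwise angles through $E[\sigma_i\sigma_j]=1-2\theta_{ij}/\pi$), the surrogate $Z = u^\T Y$ admits a clean second-moment bound via Cauchy--Schwarz on the quadratic form $E[uu^\T]=I/n$. The symmetrization step is what makes it safe to work with the signed sum $Y$ rather than $X$ itself, whose mean $S/2$ could be aligned adversarially and destroy any naive attempt to center.
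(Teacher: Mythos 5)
Your proof is correct. The paper merely cites Hoberg--Rothvo\ss{}~\cite{rothvoss} for this lemma without reproducing an argument, so there is no in-paper proof to compare against; on its own merits, every step you take checks out: the parallelogram identity combined with the $u\mapsto-u$ symmetry correctly reduces $\|X\|$ to $\|Y\|$ at a factor-$2$ cost in both threshold and probability; the Gaussian factorization $u=g/\|g\|_2$ with the independence of direction and norm gives $E[|u^\T v|]\ge\sqrt{2/(\pi n)}\,\|v\|_2$; the identity $E[uu^\T]=I/n$ together with Cauchy--Schwarz gives $E[Z^2]\le 1/n$; Paley--Zygmund at $\theta=\tfrac12$ yields probability $\ge 1/(2\pi)$; and the transfer back gives $P(\|X\|_2\ge 1/(2\sqrt{2\pi n}))\ge 1/(4\pi)$, which clears the stated threshold since $1/(2\sqrt2)>1/4$.

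One small observation: the symmetrization detour through $Y$ is elegant but not needed. The identical machinery applies directly to $W:=u^\T X=\sum_i(u^\T v_i)^+$, which already satisfies $\|X\|_2\ge W$ by Cauchy--Schwarz. By the symmetry of each $u^\T v_i$ about zero one has $E[W]=\tfrac12E[Z]\ge\tfrac12\sqrt{2/(\pi n)}$, and $W\le Z$ pointwise gives $E[W^2]\le E[Z^2]\le 1/n$, so Paley--Zygmund at $\theta=\tfrac12$ yields $P(\|X\|_2\ge\tfrac14\sqrt{2/(\pi n)})\ge 1/(8\pi)$ directly, with no parallelogram or distributional-equality argument. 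Your route recovers a marginally better constant probability ($1/(4\pi)$ versus $1/(8\pi)$) at the price of the extra symmetrization step; both comfortably establish the claim.
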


Within Algorithm~\ref{alg:main-weakly}, fix $\varepsilon\leq 1/(12\sqrt{\pi}n)$. Given the vector $y=\sum_{i=1}^{k} x_i g_i/\|g_i\|_Q$ returned by a call of \Call{Sliding von Neumann}{$f,\mu,W,Q,\varepsilon$}, we  let $v_i=x_i Q^{1/2}g_i/\|g_i\|_Q$ for $i=1,\ldots, k$;  by construction, $\sum_{i=1}^k \|v_i\|_2=1$.
We can randomly sample the set $I$ as in Lemma~\ref{lem:HR-random} that satisfies the requirements of  Lemma~\ref{lemma:HR}; we repeat to sample $I$ until condition~\eqref{eq:HR-probabilistic} is satisfied. At this point,
the vector $z=\sum_{i\in I }x_i \frac{g_i}{\|g_i\|_Q}$ satisfies condition \eqref{eq:HR-condition}, because
$$\frac{\|y\|_Q}{\|z\|_Q}\leq 4\sqrt{\pi n}\cdot \varepsilon\leq \frac{1}{3\sqrt{n}}.$$
Observe that each call to \Call{Sliding von Neumann}{$f,\mu,W,Q,\varepsilon$} requires $\ceil{\varepsilon^{-2}}=O(n^2)$ iterations, for a total time of $O(n^3\cdot EO+n^4)$ per call, just as for Algorithm~\ref{alg:main-weakly}. Checking if the random set $I$ satisfies
condition~\eqref{eq:HR-probabilistic} takes time $O(mk)$, where $k$ is the number of vectors in the convex combination defining $y$, which is bounded by the number $O(n^2)$ of iterations of von Neumann. The expected number of times we need to generate a random set $I$ before condition~\eqref{eq:HR-probabilistic} is verified is constant, hence in expected $O(n^3)$ arithmetic operation we can compute the rescaling direction $z$ after each call \Call{Sliding von Neumann}{$f,\mu,W,Q,\varepsilon$}. Hence the overall expected running time of the algorithm is   $O((n^4\cdot EO+n^5)\log(n\Lbt{f}))$, just as for Algorithm~\ref{alg:main-weakly}.

\subsection{Rank-1 pullback}
The pullback framework described in Section~\ref{sec:farkas} can also be adapted to the use of rank-1 updates of the form~\eqref{eq:rank-1}. All arguments in Section~\ref{sec:farkas} proceed without any modification, provided that we replace Lemma~\ref{lemma:pullback} (used for the multi-rank rescaling~\eqref{eq:rescale}) with the following analogous statement for the rank-1 rescaling \eqref{eq:rank-1}.
\begin{lemma}
\label{lemma:pullback-rank 1} Let $A\in\R^{n\times p}$, $R\in\bb{S}_{++}^n$, and $Q=R^{-1}$. Let $x\in\R^p_+$ and $I\subseteq [p]$ such that the vectors $y:=\sum_{i=1}^p x_i \frac{a_i}{\|a_i\|_Q}$ and $z:=\sum_{i\in I} x_i \frac{a_i}{\|a_i\|_Q}$ satisfy
$$\frac{\|y\|_Q}{\|z\|_Q}\leq \gamma.$$
Define
$$R':=\frac{1}{(1+3\gamma^2)}\left(R+3\frac{ zz^\T }{\|z\|_Q^2}\right),$$
and $Q'=(R')^{-1}$. For every $v\in \R^n$, there exists $\mu\in\R^p_+$ such that $\|v+A\mu\|_Q\leq \|v\|_{Q'}$.
\end{lemma}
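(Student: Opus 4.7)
The plan is to adapt the proof of Lemma~\ref{lemma:pullback} to the rank-1 setting; the main new issue is designing $\mu\geq 0$ so that it simultaneously cancels the rank-1 correction and stays non-negative. I will start by setting $u := \frac{1}{1+3\gamma^2}RQ'v$, so that $Q'v = (1+3\gamma^2)Qu$. Substituting the definition of $R'$ into the identity $v = R'Q'v$ will yield
\begin{equation*}
v = u + 3\,\frac{\scalar{z,u}_Q}{\|z\|_Q^2}\,z \eqdef u + c z,
\end{equation*}
and a one-line computation of $\|v\|_{Q'}^2 = v^\T Q' v = (1+3\gamma^2)\scalar{v,u}_Q$ will give
\begin{equation*}
\|v\|_{Q'}^2 = (1+3\gamma^2)\left(\|u\|_Q^2 + 3\,\frac{\scalar{z,u}_Q^{\,2}}{\|z\|_Q^2}\right).
\end{equation*}

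Next, writing $c^+ := \max\{0,c\}$, I will define $\mu_i := x_i(c^+-c)/\|a_i\|_Q$ for $i\in I$ and $\mu_i := x_i c^+/\|a_i\|_Q$ for $i\notin I$; non-negativity is automatic since $c^+\geq c$ and $c^+\geq 0$. A one-line column sum then gives $A\mu = c^+ y - cz$, hence $v + A\mu = u + c^+ y$, and the triangle inequality together with the hypothesis $\|y\|_Q \leq \gamma\|z\|_Q$ will bound $\|v+A\mu\|_Q \leq \|u\|_Q + c^+\gamma\|z\|_Q$.

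It then remains to verify the quadratic inequality $\|v + A\mu\|_Q \leq \|v\|_{Q'}$. Setting $\phi := \scalar{z,u}_Q/\|z\|_Q$, so that $c^+\|z\|_Q = 3\phi^+$, the target reduces to $(\|u\|_Q + 3\gamma \phi^+)^2 \leq (1+3\gamma^2)(\|u\|_Q^2 + 3\phi^2)$. For $\phi\leq 0$ the left side collapses to $\|u\|_Q^2$ and the bound is immediate from $1+3\gamma^2\geq 1$; for $\phi>0$, expanding and cancelling reduces the difference of the two sides to $3(\phi-\gamma\|u\|_Q)^2\geq 0$. I do not expect any conceptual obstacle beyond the design of $\mu$ itself: when $\scalar{z,u}_Q\leq 0$ one fully cancels $cz$ using only the $i\in I$ coordinates (where $c^+ - c = -c \geq 0$), whereas when $\scalar{z,u}_Q > 0$ one adds a compensating non-negative multiple of $y$ across all coordinates, paying only the cheap residual $c^+\|y\|_Q$ permitted by the hypothesis.
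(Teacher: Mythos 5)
Your proposal is correct and follows essentially the same structure as the paper's proof: same choice of $u$, same decomposition $v = u + cz$ obtained from the rank-1 rescaling identity, same computation of $\|v\|_{Q'}^2 = (1+3\gamma^2)\bigl(\|u\|_Q^2 + 3\scalar{z,u}_Q^2/\|z\|_Q^2\bigr)$, and the same final square-completion $3(\phi - \gamma\|u\|_Q)^2\ge 0$. The only real difference is the choice of the perturbation $\mu$: the paper splits on the sign of $\scalar{z,u}_Q$ and arrives at $v + A\mu = u + |c|\,y$, whereas your unified formula $\mu_i = x_i(c^+ - c)/\|a_i\|_Q$ for $i\in I$ and $\mu_i = x_i c^+/\|a_i\|_Q$ for $i\notin I$ gives $v + A\mu = u + c^+ y$. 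Your variant is slightly cleaner — when $\scalar{z,u}_Q < 0$ you cancel $cz$ exactly using only the $I$-columns and incur no residual at all, while the paper still adds a nonzero $|c|y$ term — but both are dominated by the same bound on $\|v\|_{Q'}$, so the difference has no effect on the conclusion or its sharpness.
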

\begin{proof}
First note that $$Q':=(1+3\gamma^2)\left(Q-\frac{3}{4} \frac{ Qzz^\T Q^\T }{\|z\|_Q^2}\right).$$
Let $v\in \R^n$, and define $$u=\frac{RQ'v}{1+3\gamma^2},$$
Note that $$v=R'QRQ'v=u+3\frac{\scalar{z,u}_Q}{\|z\|_Q^2}z.$$
Thus, if we define
$$\mu_i=\left\{\begin{array}{ll}
                 3\frac{x_i}{\|a_i\|_Q} \frac{|\scalar{z,u}_Q|}{\|z\|_Q^2}& i\notin I \\
                 \max\left\{0,-6\frac{x_i}{\|a_i\|_Q} \frac{\scalar{z,u}_Q}{\|z\|_Q^2}\right\} & i\in I
               \end{array}
\right.$$
we obtain $$v+A\mu=u+ 3\frac{|\scalar{z,u}_Q|}{\|z\|_Q^2}y.$$

Note that  $\mu\geq 0$ and
\begin{eqnarray*}
\|v\|^2_{Q'}&=&v^TQ'R(QR'Q)RQ'v=(1+3\gamma^2)u^\T\left(Q+3\frac{Qzz^\T Q}{\|z\|_Q^2}\right)u\\
&=&\|u\|_Q^2 (1+3\gamma^2)(1+3\alpha^2),
\end{eqnarray*}
where we define $\alpha:= \frac{\scalar{z,u}_Q}{\|z\|_Q\|u\|_Q}$.
On the other hand
$$\|v+A\mu\|_Q\leq \|u\|_Q+3\frac{|\scalar{z,u}_Q|}{\|z\|_Q^2}\|y\|_Q=\|u\|_Q(1+3|\alpha| \frac{\|y\|_Q}{\|z\|_Q})\leq \|u\|_Q(1+3|\alpha|\gamma).$$
The statement now follows by noting that $(1+3\gamma^2)(1+3\alpha^2)-(1+3|\alpha|\gamma)^2=3(|\alpha|-\gamma)^2\geq 0$.
\end{proof}

%%% Local Variables:
%%% mode: latex
%%% TeX-master: "Rescaled-LP-sfm"
%%% End:

\section*{Acknowledgments.}
We would like to thank the anonymous referees for the many comments and suggestions that have helped improve the presentation of the paper.

\appendix

\section{Proofs of technical lemmas}

\subsection{Proofs of Section~\ref{sec:prelim}}$\,$

\begin{proof}[Proof of Lemma~\ref{thm:approx-convert}] Assume w.l.o.g that $z(v_1)\le z(v_2)\le \ldots\le z_(v_n)$, and denote $S_i=\{v_1,\ldots,v_i\}$. The hypothesis of the statement can
be written as $-\min_{x\in B(f)} z^\top (x-z) \le \varepsilon$. If we let $f'\st 2^V\mapsto \R$ be the submodular function defined by $f'(S)=f(S)-z(S)$, observing that $x\in B(f)$ if and only if $x-z\in B(f')$, the previous expression implies
$-\min_{y\in B(f')}z^\top y\le \varepsilon$. By \eqref{eq:vertex-cost}, and observing that $f'(V)=0$, this can be written as

\begin{eqnarray*}
\varepsilon &\ge& \sum_{i=1}^{n-1}(f(S_i)-z(S_i))(z(v_{i+1})-z(v_i))\\
&=& \int_{-\infty}^{+\infty} f(\{v\st z(v)\le t\})-z(\{v\st z(v)\le t\})\, dt\\
&\ge& \int_{-\eta}^{+\eta} f(\{v\st z(v)\le t\})-z(\{v\st z(v)\le t\})\, dt,\\
\end{eqnarray*}
where the last inequality holds trivially for every $\eta>0$ since $f(S)-z(S)\ge 0$ for all $S\subseteq V$ because $z\in B(f)$, and where the equality holds because

\[
\{v\st z(v)\le t\}=
\begin{cases}
\emptyset & \mbox{if } t<z(v_1)\\
S_i &\mbox{if } z(v_i)\leq t < z(v_{i+1})\\
V & \mbox{if } t\ge z(v_n).
\end{cases}
\]

It follows from the above that, for every $\eta>0$, there exists $t\in[-\eta,\eta]$ such that $f(\{v\st z(v)\le t\})-z(\{v\st z(v)\le t\})\le \varepsilon/2\eta$. Setting $S:=\{v\st z(v)\le t\}$, this implies
\[f(S)-z^-(V)\le z^+(S)-z^-(V\sm S)+ \varepsilon/2\eta\le n|t| +\varepsilon/2\eta\le n\eta +\varepsilon/2\eta.\]
Choosing $\eta=\sqrt{\varepsilon/2n}$ gives $f(S)-z^-(V)\le  \sqrt{2n\varepsilon}$, as required.
\end{proof}

\subsection{Proofs of Section~\ref{sec:weakly-analysis}}
\label{appendix}
\begin{lemma}
  \label{lemma:E(R) contained at rescaling}
The property that  $F_\mu\subseteq E(R)$ is preserved whenever Algorithm~\textsc{Rescaling-SFM} performs a rescaling.
\end{lemma}
\begin{proof}
Assume that at a given step of the algorithm  $F_\mu\subseteq E(R)$ holds, and we rescale $R$ to $R'$, where
\[
R'=\frac{1}{(1+\varepsilon)^2}\left(R+\sum_{i=1}^k \frac{x_i }{\|g_i\|_Q^2}g_i g_i^\T\right)
\]
for the vector $x$ returned by the sliding von Neumann algorithm.

We show $F_\mu\subseteq E(R')$.  Consider an arbitrary point $z\in F_\mu$;
then $ g_i^\T z\ge 0$ for all $i\in[k]$ and, by the induction hypothesis, $\|z\|^2_{R}\le 1$ because $z\in E(R)$.

Recall that, in the algorithm, the vector $y=\sum_{i=1}^k x_i \frac{g_i}{\|g_i\|_Q}$ satisfies $\|y\|_Q\leq\varepsilon$.
By the Cauchy-Schwartz inequality, we have $y^\T z=y^\T Q^{1/2}{Q^{-1/2}}z\leq \|y\|_Q\|z\|_R\leq \varepsilon$, and similarly $ g_i^\T z\le \|g_i\|_Q\|z\|_R\leq \|g_i\|_Q$ for every $i\in[k]$. We then have
\begin{eqnarray*}
\|z\|^2_{R'}&=&\frac{1}{(1+\varepsilon)^2} z^\T \left(R+\sum_{i=1}^k\frac{x_i}{\|g_i\|_Q^2} g_ig_i^\T\right) z
=\frac{1}{(1+\varepsilon)^2}\left(\|z\|^2_{R}+\sum_{i=1}^k x_i \left(\frac{g_i^\T z}{\|g_i\|_Q} \right)^2\right)\\
&\le&\frac{1}{(1+\varepsilon)^2}\left(1+\sum_{i=1}^k x_i \frac{g_i^\T z}{\|g_i\|_Q} \right) = \frac{1+ y^\T z}{(1+\varepsilon)^2}\leq 1,
\end{eqnarray*}
where the first inequality follows from the facts that $\|z\|_R\leq 1$, $x\geq 0$, and $0\le g_i^\T z\le \|g_i\|_Q$ for all $i\in[k]$, while the second follows from  $y^\T z\leq \varepsilon$. Consequently, $z\in E(R')$, completing the proof.
\end{proof}

\begin{proof}[Proof of Lemma~\ref{lem:dual-vol-dec}]
Let $R$ and $R'$ denote the matrix before and after the rescaling. Let $X=\sum_{i=1}^k
  x_ig_ig_i^\T/\|g_i\|_Q^2$; hence $R'={(R+X)}/(1+\varepsilon)^2$. The ratio of the
  two determinants is
\[
\frac{\det(R')}{\det(R)}=\frac{\det(R+X)}{(1+\varepsilon)^{2n}\det(R)}=\frac{\det\left(I_n+R^{-1/2}XR^{-1/2}\right)}{(1+\varepsilon)^{2n}}
\]
Now $R^{-1/2}=Q^{1/2}$, and $Q^{1/2}XQ^{1/2}$ is a positive
semidefinite matrix. From the above, the inequality $\det(I_n+M)\ge 1+\tr(M)$ for every $M\in\bb{S}^n_{++}$, and the linearity of the trace, we get
\[
\frac{\det(R')}{\det(R)}\ge \frac{1+\tr(Q^{1/2}XQ^{1/2})}{(1+\varepsilon)^{2n}}=\frac{1}{(1+\varepsilon)^{2n}}
\left(1+\sum_{i=1}^k \frac{x_i}{\|g_i\|_Q^2} \tr(Q^{1/2}g_ig_i^\T Q^{1/2})\right).
\]
Finally,
$\tr(Q^{1/2}{g_ig_i^\T }Q^{1/2})=\tr(g_i^\T Qg_i)=\|g_i\|_Q^2$. Therefore
we conclude
\[
\frac{\det(R')}{\det(R)}\ge \frac{1+\sum_{i=1}^k x_i}{(1+\varepsilon)^{2n}}=\frac{2}{(1+\varepsilon)^{2n}}\ge \frac{16}{9},
\]
where the last inequality follows from $\varepsilon=\frac{1}{20n}$.
 \end{proof}

\begin{proof}[Proof of Lemma~\ref{lem:a-k-Q-bound}]
Let $\bar g=\arg\min_{g\in \Gamma}\|g\|_Q/\|g\|_2$.
Then, by \eqref{eq:R-form},
\begin{align}\label{eq:g bound}
\frac{\|\bar g\|^2_Q}{\|\bar g\|^2_2}\sum_{g\in\Gamma} \gamma_g &\leq \sum_{g\in\Gamma}\gamma_g \frac{\| g\|^2_Q}{\|g\|^2_2}=\sum_{g\in\Gamma} \gamma_g
\frac{g^\T Q g}{\|g\|^2_2}
= \tr\left(Q \sum_{g\in\Gamma} \gamma_g
   \frac{g g^\T}{\|g\|^2_2}\right)\\
&= \tr(Q(R-\alpha I_n))=\tr(I_n-\alpha Q)=n-\alpha \tr(Q)<n~. \nonumber
\end{align}
The final inequality holds since $Q$ is positive definite.

Note from \eqref{eq:R-form} that $\tr(R)=\alpha n+\sum_{g\in\Gamma} \gamma_g\le n+\sum_{g\in\Gamma} \gamma_g$. The latter and the inequality $\det(M)^{1/n}\le \tr(M)/n$ for every $M\in\bb{S}^n_{++}$ imply that $\sum_{g\in\Gamma}\gamma_g\geq \tr(R)-n\ge n(\det(R)^{1/n}-1)$. The statement now follows from \eqref{eq:g bound}.
 \end{proof}

\bibliographystyle{abbrv}
\bibliography{rescaled}

\end{document}

%%% Local Variables:
%%% mode: latex
%%% TeX-master: t
%%% End: